\numberwithin{equation}{section}
\theoremstyle{plain}
\newtheorem{theorem}{Theorem}
\newtheorem{lemma}{Lemma}
\newtheorem{corollary}{Corollary}
\newtheorem{Question}{Question}
\newtheorem{definition}{Definition}
\newtheorem{Example}{Example}
\newtheorem{remark}{Remark}
\newtheorem{Notation}{Notation}
\newtheorem{Observation}{Observation}
\DeclareMathOperator{\pois}{Pois}
\title{A Development of Continuous-Time Transfer Entropy%\thanks{Grants or other notes
%about the article that should go on the front page should be
%placed here. General acknowledgments should be placed at the end of the article.}
}
\author{Joshua N. Cooper         \and
        Christopher D. Edgar %etc.
}
\begin{document}

\maketitle

\begin{abstract}
Transfer entropy (TE) was introduced by Schreiber in 2000 as a measurement of the predictive capacity of one stochastic process with respect to another.  Originally stated for discrete time processes, we expand the theory in line with recent work of Spinney, Prokopenko, and Lizier to define TE for stochastic processes indexed over a compact interval taking values in a Polish state space. We provide a definition for continuous time TE using the Radon-Nikodym Theorem, random measures, and projective limits of probability spaces.  As our main result, we provide necessary and sufficient conditions to obtain this definition as a limit of discrete time TE, as well as illustrate its application via an example involving Poisson point processes. As a derivative of continuous time TE, we also define the transfer entropy rate between two processes and show that (under mild assumptions) their stationarity implies a constant rate. We also investigate TE between homogeneous Markov jump processes and discuss some open problems and possible future directions.
\end{abstract}

\section{Introduction}
\label{intro}
The quantification of causal relationships between time series is a fundamental problem in fields including, for example, neuroscience (\cite{battaglia2012dynamic,ito2011extending,neurobook,wollstadt2014efficient}), social networking (\cite{he2013identifying,ver2012information}), finance (\cite{debowski,kwon2008information,sandoval,sensoy2014effective}), and machine learning (\cite{Herzog2017TransferEF,obst2010improving}).  Among the various means of measuring such relationships, information theoretical approaches are a rapidly developing area in concert with other paradigms such as Pearl semantics and Granger causality.  One such approach is to make use of the notion of transfer entropy, which we abbreviate throughout as ``TE''.  Broadly speaking, transfer entropy is a functional which measures the information transfer between two stochastic processes.  Schreiber's definition of transfer entropy \cite{schreiber} characterizes information transfer as an informational divergence between conditional probability mass functions.  The original definition is native to discrete space processes indexed over a countable set, often the natural numbers.  One can generalize Schreiber's definition to handle the case when the random variables comprising the process have state space $\mathbb{R}$ via the Radon-Nikodym Theorem as demonstrated in \cite{article}.  While this formalism is applicable to some practical scenarios, it suffers from a serious deficiency: it is only applicable to processes defined over discrete time.  

A treatment of TE for processes that are either indexed over an uncountable set or do not have $\mathbb{R}$ as the state space of their constituent variables has been lacking in the literature.  A common workaround to this shortcoming is the approach of time-binning which has been widely used as a means to capture intuitively the notion of information transfer between processes (\cite{hlavavckova2007causality,lee2012transfer,liang2013liang}).  These approaches, while sometimes effective and practicable, do not provide a native definition of TE in continuous time; that is, TE between processes indexed over an uncountable set. Recently, Spinney, Prokopenko, and Lizier (\cite{spl}) set out a framework to remedy this gap.  We formalize this approach and explore the consequences by providing a definition of TE for discrete time processes comprised of random variables with a Polish state space and extend this definition to continuous time processes via projective limits, random measures, and the Radon-Nikodym Theorem.  In Section \ref{a recasting}, we provide our main result, Theorem \ref{main}, which characterizes when our continuous time definition of TE can be obtained as a limit of discrete time TE and apply it to a time-lagged Poisson point process in Section \ref{Application: Lagged Poisson point process}.

In some applications, the instantaneous transfer entropy is of particular interest.  Using our methodology, we define the transfer entropy rate (TE rate) as the right derivative with respect to time of the expected pathwise transfer entropy (EPT) functional defined in Section \ref{PT_EPT_sec} and demonstrate some of its basic properties, including a precise version of a result stated without proof in \cite{spl} regarding a particularly well-behaved class of stationary processes.  In Section \ref{cadlag_sec}, we consider time-homogeneous Markov jump processes and provide an analytic form of the EPT via a Girsanov formula.  We finish with several open questions and directions for future work, as well as an Appendix which provides some relevant calculations regarding TE in the context of Wiener processes.

\section{Discrete Time Transfer Entropy over a Polish Space}
Suppose  $X:=\{X_{n}\}_{n\geq 1}$ and $Y:=\{Y_{n}\}_{n \geq 1}$ are stochastic processes adapted to the filtered probability space  $\left(\Omega, \mathcal{F}, \{\mathcal{F}_{n}\}_{n \geq 1},\mathbb{P}\right)$.  Suppose further that for each $n \geq 1$,   $X_{n}$  and $Y_{n}$ are random variables taking values in a Polish state space $\Sigma$, i.e., a completely metrizable, separable space; and let $\mathcal{X}$ be a $\sigma$-algebra of subsets of $\Sigma$.  Denote by $\mathbb{P}_{n}$ the probability distribution of the random variable $X_{n}$ (by which sometimes we will mean a conditional probability distribution). For integers $k,l,n\geq 1$, we denote the ``history vectors'' of $X$ and $Y$ by
$$
\left(X_{n-k-1}^{n-1}\right) = \left(X_{n-k-1}, X_{n-k},..., X_{n-1}\right) $$ and $$\left(Y_{n-l-1}^{n-1}\right) = \left(Y_{n-l-1}, Y_{n-l},..., Y_{n-1}\right).
$$
Since $\Sigma$ is Polish, for each $k,l,n\geq 1$, there exist functions (in particular, regular conditional probability measures\footnote{The existence of regular conditional probability measures is guaranteed on Polish spaces (see Theorem 6.16 of \cite{panangaden2009labelled})}) $\mathbb{P}_{n}^{(k,l)}\left[X_{n}\middle| \left(X_{n-k-1}^{n-1}\right), \left(Y_{n-l-1}^{n-1}\right)\right]$ and $\mathbb{P}_{n}^{(k)}\left[X_{n}\middle| \left(X_{n-k-1}^{n-1}\right)\right]$ mapping $\mathcal{F}_{n}\times \Omega$ to $[0,1]$ with the following properties:
\begin{itemize}
\item[1.]For each $\omega\in\Omega$, both \begin{equation}\label{fix_before_measure_X}
\mathbb{P}_{n}^{(k)}\left[X_{n}\middle| \left(X_{n-k-1}^{n-1}\right)\right](\cdot, \omega)
\end{equation}
and
\begin{equation}
\mathbb{P}_{n}^{(k,l)}\left[X_{n} \middle|  \left(X_{n-k-1}^{n-1}\right), \left(Y_{n-l-1}^{n-1}\right)\right](\cdot, \omega)
\end{equation} are measures on $ \left(\Sigma, \mathcal{X}\right)$.
\item[2.]
$\forall  A \in \mathcal{F}_{n}$  the mappings \begin{equation*}
\omega \mapsto \mathbb{P}_{n}^{(k,l)}\left[X_{n} \middle|  \left(X_{n-k-1}^{n-1}\right), \left(Y_{n-l-1}^{n-1}\right)\right](A, \omega) \end{equation*}
and
\begin{equation*}
\omega \mapsto \mathbb{P}_{n}^{(k,l)}\left[{X_{n}} \middle|  \left(X_{n-k-1}^{n-1}\right)\right](A,\omega)
\end{equation*}
are $\mathcal{F}_{n}-$ measurable random variables.

\item[3.]For all $\omega \in \Omega$ and $ A\in \mathcal{F}_{n}$ we have both
\begin{equation*}
\begin{aligned}
& \mathbb{P}_{n}^{(k,l)}\left[X_{n} \middle|  \left(X_{n-k-1}^{n-1}\right), \left(Y_{n-l-1}^{n-1}\right)\right](A, \omega) = \\
& \mathbb{P}_{n}^{(k,l)}\left[\left\{X_{n}\in A\right\} \middle |  \left\{B \in \sigma\left(\left(X_{n-k-1}^{n-1}\right), \left(Y_{n-l-1}^{n-1}\right)\right): \omega \in B\right\}\right]
\end{aligned}
\end{equation*}
and 
\begin{equation*}
\begin{aligned} &\mathbb{P}_{n}^{(k)}\left[X_{n} \middle|  \left(X_{n-k-1}^{n-1}\right)\right](A, \omega)= \\&  \mathbb{P}_{n}^{(k)}\left[\left\{X_{n}\in A\right\} \middle|  \left\{B \in \sigma\left(\left(X_{n-k-1}^{n-1}\right)\right) : \omega \in B\right\}\right].
\end{aligned}
\end{equation*}
\end{itemize}
If $\omega\in\Omega$, the conditional probabilities $\mathbb{P}_{n}^{(k,l)}\left[X_{n} \middle|  \left(X_{n-k-1}^{n-1}\right), \left(Y_{n-l-1}^{n-1}\right)\right](\cdot, \omega)$ and $\mathbb{P}_{n}^{(k)}\left[X_{n} \middle|  \left(X_{n-k-1}^{n-1}\right)\right](\cdot, \omega)$ are only defined in the case that each event $\left\{B \in \sigma\left(\left(X_{n-k-1}^{n-1}\right)\right) : \omega \in B\right\}$ and $\left\{B \in \sigma\left(\left(X_{n-k-1}^{n-1}\right), \left(Y_{n-l-1}^{n-1}\right)\right): \omega \in B\right\}$ is not a $\mathbb{P}$-null set. We will assume neither of these sets are $\mathbb{P}$-null throughout this work whenever dealing with conditional probabilities.  
\begin{Notation} For sake of convenience let
$$
\mathbb{P}_{n}^{(k)}\left[X_{n} \middle|   \left(X_{n-k-1}^{n-1}\right)\right](A, \omega) := \mathbb{P}_{n}^{(k)}\left[X_{n} \middle|   \left(X_{n-k-1}^{n-1}\right)\right] (\omega)\left( A\right)
$$ 
and 
\begin{align*}
\mathbb{P}_{n}^{(k,l)}\left[X_{n}  \middle|   \left(X_{n-k-1}^{n-1}\right), \left(Y_{n-l-1}^{n-1}\right)\right](A, \omega) := \\  \mathbb{P}_{n}^{(k,l)}\left[X_{n} \middle|   \left(X_{n-k-1}^{n-1}\right), \left(Y_{n-l-1}^{n-1}\right)\right](\omega)\left(A \right)
\end{align*}
whenever $n,k,l\geq 1$, $\omega\in\Omega$, and $A\in\mathcal{F}_{n}$. 
\end{Notation}

The following definition generalizes Schreiber's definition of TE for discrete time processes whose random variables have a Polish state space.
\begin{definition}\label{disc_TE}
Suppose $n,k,l \geq 1$ are integers.  Suppose further that $\Sigma$ is a Polish space and that 
\begin{equation}\label{abscont}
\mathbb{P}_{n}^{(k)}\left[X_{n} \middle|  \left(X_{n-k-1}^{n-1}\right), \left(Y_{n-l-1}^{n-1}\right)\right] (\omega)\ll \mathbb{P}_{n}^{(k)}\left[X_{n} \middle|  \left(X_{n-k-1}^{n-1}\right)\right] (\omega)
\end{equation}
for each $\omega\in\Omega.$ Define the {\em transfer entropy from }$Y$ {\em to }$X$ {\em at }$n$ {\em with history window lengths} $k$ {\em and }$l$, denoted $\mathbb{T}_{Y \rightarrow X}^{(k,l)}(n)$, by
\begin{small}
\begin{equation}\label{TE_def}
\begin{aligned}
\mathbb{T}_{Y \rightarrow X}^{(k,l)}(n)
= \mathbb{E}_{\mathbb{P}}\left[ KL\left(\mathbb{P}_{n}^{(k,l)}\left[X_{n} \middle|  \left(X_{n-k-1}^{n-1}\right),  \left(Y_{n-l-1}^{n-1}\right)\right]  \middle| \middle|  \mathbb{P}_{n}^{(k)}\left[X_{n} \middle|  \left(X_{n-k-1}^{n-1}\right)\right] \right)\right]
\end{aligned}
\end{equation}
\end{small}
and call $X$ the ``destination process'' and $Y$ the ``source process''.
\end{definition}
\begin{Observation}

Due to \cite{reg_cond_ent}, we have the following for each $n \geq 1$:
\begin{itemize}
\item[1.] For fixed $k,l \geq 1$, $\mathbb{T}_{Y \rightarrow X}^{(k,l)}$ is a measurable function from $\mathbb{N}$ into the extended nonnegative real line. 
\item[2.]  $KL\left(\mathbb{P}_{n}^{(k,l)}[X_{n} \middle|   (X_{n-k-1}^{n-1}),  (Y_{n-l-1}^{n-1})](\omega)  \middle| \middle|  \mathbb{P}_{n}^{(k)}[X_{n} \middle|   (X_{n-k-1}^{n-1})](\omega) \right) \geq 0, \forall \omega \in \Omega.$
\item[3.]  $\frac{d\mathbb{P}_{n}^{(k,l)}[X_{n} \mid   (X_{n-k-1}^{n-1}), (Y_{n-l-1}^{n-1})](\cdot)}{d\mathbb{P}_{n}^{(k)}[X_{n} \mid   (X_{n-k-1}^{n-1}) ](\cdot)}\left( \cdot \right)$ is $\mathcal{F} \times \mathcal{X}$-measurable as $X$ is adapted to $\mathcal{F}$.
\item[4.] For all $\omega \in \Omega$, 
$$
KL\left(\mathbb{P}_{n}^{(k,l)}[X_{n}| (X_{n-k-1}^{n-1}),  (Y_{n-l-1}^{n-1})](\omega)  \middle| \middle|  \mathbb{P}_{n}^{(k)}[X_{n}| (X_{n-k-1}^{n-1})](\omega) \right)
$$
is $\mathcal{F}-$measurable.
\end{itemize}

\end{Observation}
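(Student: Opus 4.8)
The plan is to establish claims (2), (3), and (4) first, since claim (1) follows at once once the others are in hand. Throughout, fix $n,k,l\geq 1$ and, for $\omega\in\Omega$, abbreviate the two regular conditional probability measures on $(\Sigma,\mathcal{X})$ by
\[
\mu_\omega := \mathbb{P}_{n}^{(k,l)}\left[X_{n}\mid \left(X_{n-k-1}^{n-1}\right),\left(Y_{n-l-1}^{n-1}\right)\right](\omega),
\qquad
\nu_\omega := \mathbb{P}_{n}^{(k)}\left[X_{n}\mid \left(X_{n-k-1}^{n-1}\right)\right](\omega),
\]
which are genuine probability measures by property 1 of the setup. The strategy is to reduce each assertion to a property of the Radon--Nikodym derivative $g_\omega := d\mu_\omega/d\nu_\omega$, which exists by the absolute continuity hypothesis \eqref{abscont}.

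For claim (2), I would fix $\omega$ and write the divergence as $KL(\mu_\omega\,\|\,\nu_\omega) = \int_\Sigma g_\omega \log g_\omega \, d\nu_\omega$. Nonnegativity is then Gibbs' inequality: since $t\mapsto t\log t$ is convex and $\nu_\omega$ is a probability measure, Jensen's inequality gives $\int_\Sigma g_\omega\log g_\omega\,d\nu_\omega \geq \left(\int_\Sigma g_\omega\,d\nu_\omega\right)\log\left(\int_\Sigma g_\omega\,d\nu_\omega\right)$, and the right-hand side equals $\mu_\omega(\Sigma)\log\mu_\omega(\Sigma)=1\cdot\log 1 = 0$. This is purely pointwise in $\omega$ and requires no measurability input.

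Claim (3) is the technical crux, and I expect it to be the main obstacle: one must promote the $\omega$-by-$\omega$ existence of $g_\omega$ to joint $\mathcal{F}\times\mathcal{X}$-measurability of $(\omega,x)\mapsto g_\omega(x)$. Here the Polish hypothesis on $\Sigma$ is essential, as it makes $\mu$ and $\nu$ genuine probability kernels and, by property 2 of the setup, makes $\omega\mapsto\mu_\omega(A)$ and $\omega\mapsto\nu_\omega(A)$ measurable for each $A\in\mathcal{X}$ (and hence $\mathcal{F}$-measurable, since $\mathcal{F}_n\subseteq\mathcal{F}$). The plan is to invoke the joint-measurability result for Radon--Nikodym derivatives of measurable families of measures supplied by \cite{reg_cond_ent}: along a countable field generating $\mathcal{X}$ one forms the Doob martingale of ratios of kernel masses over a refining sequence of finite sub-$\sigma$-algebras, each such ratio being jointly measurable in $(\omega,x)$ as a finite combination of the maps from property 2; the martingale converges $\nu_\omega$-a.e.\ to $g_\omega$, and the $\limsup$ preserves joint measurability. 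The delicate point is the uniform handling of the $\nu_\omega$-null sets on which convergence may fail as $\omega$ varies, which is precisely what the cited result organizes.

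Finally, claim (4) follows from (2) and (3) by integration: writing $\omega\mapsto\int_\Sigma g_\omega(x)\log g_\omega(x)\,d\nu_\omega(x)$, the integrand is $\mathcal{F}\times\mathcal{X}$-measurable by (3), it is bounded below (indeed, $g\log g\geq -e^{-1}$), and the integrating kernel $\nu_\omega$ depends $\mathcal{F}$-measurably on $\omega$ by property 2, so Tonelli's theorem for kernels yields an $\mathcal{F}$-measurable $[0,\infty]$-valued function of $\omega$. Claim (1) is then immediate: $\mathbb{N}$ carries the discrete $\sigma$-algebra, so every map out of it is automatically measurable, and by (2) and (4) the expectation defining $\mathbb{T}_{Y\rightarrow X}^{(k,l)}(n)$ is the integral of a nonnegative $\mathcal{F}$-measurable function, hence a well-defined element of the extended nonnegative reals for each $n$.
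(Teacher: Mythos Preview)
Your proposal is correct and in fact goes well beyond what the paper provides: the paper offers no proof of this Observation at all, deferring each of the four claims to the cited reference \cite{reg_cond_ent}. Your sketch is a faithful elaboration of the argument that citation stands in for---Gibbs' inequality for (2), the martingale-ratio construction for the joint measurability in (3), Tonelli for kernels for (4), and the triviality of (1) once (2) and (4) are in hand---so there is nothing to compare against except the bare citation, with which your approach is entirely consistent.
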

  
\begin{Example}\label{def_recovery}
Suppose $X$ and $Y$ are discrete processes; that is, for each integer $n \geq 1$,
both $X_{n}(\Omega) $ and $Y_{n}(\Omega)$ are countable. Then 
\begin{align*}
\label{TE_expand}
&\mathbb{T}_{Y \rightarrow X}^{(k,l)}(n) = \\ 
&\mathbb{E}_{\mathbb{P}}\left[\mathbb{E}_{\mathbb{P}_{n}^{(k,l)}\left[X_{n} \middle|  \left(X_{n-k-1}^{n-1}\right), \left(Y_{n-l-1}^{n-1}\right)\right] }     \left[\log{\frac{d\mathbb{P}_{n}^{(k,l)}\left[X_{n} \middle|   \left(X_{n-k-1}^{n-1}\right), \left(Y_{n-l-1}^{n-1}\right)\right]}{d\mathbb{P}_{n}^{(k)}\left[X_{n} \middle|   \left(X_{n-k-1}^{n-1}\right) \right]}} \right]\right]\\ 
\\&=\sum _{\substack{x_{n-k-1}^{n-1} \in X_{n-k-1}^{n-1}(\Omega)\\ y_{n-l-1}^{n-1} \in Y_{n-l-1}^{n-1}(\Omega)} } \mathbb{P}_{X_{n-k-1}^{n-1}, Y_{n-l-1}^{n-1}}\left(x_{n-k-1}^{n-1}, y_{n-l-1}^{n-1}\right)\times \\& \quad \sum_{x_{n} \in X_{n}(\Omega)}\mathbb{P}_{X_{n}\mid \left(X_{n-k-1}^{n-1}\right), \left(Y_{n-l-1}^{n-1}\right)}\left[x_{n} \middle|   \left(x_{n-k-1}^{n-1}\right), \left(y_{n-l-1}^{n-1}\right)\right]\times\\ &\qquad \log{\frac{\mathbb{P}_{X_{n}\mid \left(X_{n-k-1}^{n-1}\right), \left(Y_{n-l-1}^{n-1}\right)}\left[x_{n} \middle|  (x_{n-k-1}^{n-1}), \left(y_{n-l-1}^{n-1}\right)\right]}{\mathbb{P}_{X_{n}\mid \left(X_{n-k-1}^{n-1}\right) }\left[x_{n} \middle|  \left(x_{n-k-1}^{n-1}\right) \right]}} 
\\&= \sum _{\substack{x_{n}\in X_{n}(\Omega),\\ x_{n-k-1}^{n-1} \in x_{n-k-1}^{n-1}(\Omega),\\ y_{n-l-1}^{n-1} \in Y_{n-l-1}^{n-1}(\Omega)}} \mathbb{P}_{X_{n}, X_{n-k-1}^{n-1}, Y_{n-l-1}^{n-1}}\left(x_{n}, x_{n-k-1}^{n-1}, y_{n-l-1}^{n-1}\right)\times 
\\& \qquad \log{\frac{\mathbb{P}_{X_{n}\mid \left(X_{n-k-1}^{n-1}\right), \left(Y_{n-l-1}^{n-1}\right)}\left[x_{n} \middle|   \left(x_{n-k-1}^{n-1}\right), \left(y_{n-l-1}^{n-1}\right)\right]}{\mathbb{P}_{X_{n}\mid \left(X_{n-k-1}^{n-1}\right)}\left[x_{n} \middle|   \left(x_{n-k-1}^{n-1}\right) \right]}}
\end{align*}
where the RN-derivatives have become quotients of {\em probability mass functions} since the processes is composed of discrete random variables.  The above demonstrates that Schreiber's initial definition of transfer entropy is indeed a special case of our more general definition of TE.  Furthermore, 
if $(\Sigma, \mathcal{X}) = (\mathbb{R}, \mathcal{B}(\mathbb{R}))$ and the joint probability measure $\mathbb{P}_{X_{n}, \left(X_{n-k-1}^{n-1}\right),\left( Y_{n-l-1}^{n-1}\right)}$ is absolutely continuous with respect to Lebesgue measure on $\mathbb{R}^{(1+k+l)}$, then there exist RN-derivatives ({\em probability densities})
\begin{equation}
    p_{X_{n},\left(X_{n-k-1}^{n-1}\right), \left(Y_{n-l-1}^{n-1}\right)}, p_{X_{n}\mid \left(X_{n-k-1}^{n-1}\right), \left(Y_{n-l-1}^{n-1}\right)} \text{ and }p_{X_{n}\mid \left(X_{n-k-1}^{n-1}\right)}
\end{equation}
which can replace the probability mass functions in Schreiber's definition.  In regards to our definition in this setting, $\mathbb{R}$ is indeed Polish, thus assuming (\ref{abscont}) our definition yields
\begin{equation*}
\begin{aligned}
&\mathbb{T}_{Y \rightarrow X}^{(k,l)}(n) \\&= \int_{\mathbb{R}^{(1+k+l)}}p_{X_{n},\left(X_{n-k-1}^{n-1}\right), \left(Y_{n-l-1}^{n-1}\right)}\left(x_{n},\left(x_{n-k-1}^{n-1}\right),\left( y_{n-l-1}^{n-1}\right)\right)\times\\ &
\qquad \log\left(\frac{p_{X_{n}\mid \left(X_{n-k-1}^{n-1}\right), \left(Y_{n-l-1}^{n-1}\right)}\left(x_{n}\mid\left(x_{n-k-1}^{n-1}\right), \left(y_{n-l-1}^{n-1}\right)\right)}{p_{X_{n}\mid \left(X_{n-k-1}^{n-1}\right)}\left(x_{n}\mid \left(x_{n-k-1}^{n-1}\right) \right)}\right) d\mu_{(1+k+l)}
\end{aligned}
\end{equation*}
where $\mu_{(1+k+l)}$ denotes Lebesgue measure on $\mathbb{R}^{(1+k+l)}.$  This expression is exactly that for TE in this special case (see \cite{article}); thus, our definition recovers the correct expression for TE in the case that $(\Sigma, \mathcal{X}) = (\mathbb{R}, \mathcal{B}(\mathbb{R}))$ as well. 
\end{Example}

Note that Definition \ref{disc_TE} differs somewhat from the definition of TE in \cite{spl}, in that we employ two expectations.  The idea of using two expectations to represent some of the more common conditional versions of information-theoretical functionals has appeared in other works (see Section 3 of \cite{def_int_2} and (14) in \cite{def_int_1}).

\section{Construction of path measures}
\label{CPM}
We now turn our attention to the main purpose of this work, namely, developing TE in continuous time.  We restrict our attention to the case when the uncountable indexing set is an interval.  Let $\mathbb{T} \subset \mathbb{R}_{\geq 0}$ be a closed and bounded interval whose elements we refer to as {\em times}.  Analogous to the setup for discrete time TE, we suppose $X:=\{X_{t}\}_{t\in \mathbb{T}}$ and $Y:=\{Y_{t}\}_{t\in \mathbb{T}}$ are stochastic processes adapted to the filtered probability space $\left(\Omega, \mathcal{F}, \{\mathcal{F}_{t}\}_{t \in \mathbb{T}},\mathbb{P}\right)$ such that for each $t \in \mathbb{T}$, $X_{t}$  and $Y_{t}$ are random variables taking values in the measurable state space $\left(\Sigma, \mathcal{X}\right)$ where $\Sigma$ is a Polish space and $\mathcal{X}$ is a $\sigma-$algebra of subsets of $\Sigma$.  
In this section we begin our construction of continuous time TE by introducing conditional measures on the space of sample paths of $X$. These measures will act as the continuous time analogues of the random conditional probabilities  
$$
\mathbb{P}_{n}^{(k,l)}\left[X_{n}\Big| \left(X_{n-k-1}^{n-1}\right), \left(Y_{n-l-1}^{n-1}\right)\right]
$$
and 
$$
\mathbb{P}_{n}^{(k)}\left[X_{n}\Big| \left(X_{n-k-1}^{n-1}\right)\right]
$$
in Definition \ref{disc_TE}.  The following seminal result in \cite{rao} will be crucial to the formulation of these measures.  
\begin{theorem}\label{rao_theorem}
Let $\mathbb{A}$ be any index set and $D$ the set of all its finite subsets directed by inclusion.  Let $\left(\Sigma_{t}, \mathcal{X}_{t}\right)_{t\in\mathbb{A}}$ be a family of measurable spaces where $\Sigma_{t}$ is a
topological space and $\mathcal{X}_{t}$ is a $\sigma$-field containing all the compact subsets of $\Sigma_{t}$. Suppose, for $\alpha\in D$, $\Sigma_{\alpha} = \times_{t\in\alpha}\Sigma_{t}, \mathcal{X}_{\alpha} = \bigotimes_{t\in\alpha}\mathcal{X}_{t},$ and $\mathbb{P}_{\alpha}:\mathcal{X}_{\alpha}\mapsto [0,1]$ so that $\left(\Sigma_{\alpha}, \mathcal{X}_{\alpha}, \mathbb{P}_{\alpha} \right)$ is a probability space.  If for each $\alpha \in D$, $\mathbb{P}_{\alpha}$ is inner regular relative to the compact subsets of $\mathcal{X}_{\alpha}$, i.e., for any $A\in \mathcal{X}_{\alpha}$, $\mathbb{P}_{\alpha} = \sup\left\{ \mathbb{P}_{\alpha}(C) :   C \text{ is a compact subset of } A\right\},$ and $\pi_{\alpha\beta}: \Sigma_{\beta}\mapsto \Sigma_{\alpha}$ $(\beta \geq \alpha)$, $\pi_{\alpha} = \pi_{\alpha\mathbb{A}}: \times_{t\in\mathbb{A}}\Sigma_{t} \mapsto  \Sigma_{\alpha}$ for $\alpha,\beta\in D$ are coordinate projections, then there exists a unique probability measure $\mathbb{P}_{\mathbb{A}}$ on the space $\left(\times_{t\in\mathbb{A}}\Sigma_{t},\bigotimes_{t\in\mathbb{A}}\mathcal{X}_{t} \right)$ such that $\forall \alpha \in D$,  \begin{equation}\label{rao_conc}\mathbb{P}_{\alpha} = \mathbb{P}_{\mathbb{A}}\circ\pi_{\alpha}^{-1},
\end{equation}if and only if $\left\{\left(\Sigma_{\alpha}, \mathcal{X}_{\alpha}, \mathbb{P}_{\alpha}, \pi_{\alpha\beta}\right)_{\beta \geq \alpha}: \alpha,\beta\in D\right\}$ is a {\em projective system} with respect to mappings $\left\{ \pi_{\alpha\beta}\right\}$; that is, \begin{itemize}
\item[(1)]$\pi_{\alpha\beta}^{-1}\left(\mathcal{X}_{\alpha}\right) \subset \mathcal{X}_{\beta} $ so that $\pi_{\alpha\beta}$ is $\left(\mathcal{X}_{\beta}, \mathcal{X}_{\alpha} \right)-$measurable.
\item[(2)] for any $\alpha \leq \beta\leq \lambda$, $\pi_{\alpha\beta}\circ \pi_{\beta\lambda} = \pi_{\alpha,\lambda}$, $\pi_{\alpha\alpha} = id_{\alpha}$ and 
\item[(3)]$\mathbb{P}_{\alpha} = \mathbb{P}_{\beta}\pi_{\alpha\beta}^{-1},$ whenever $\alpha \leq \beta.$
\end{itemize}
\end{theorem}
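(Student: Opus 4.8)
The plan is to prove the two implications separately: necessity of the projective-system conditions is immediate, whereas sufficiency is a Kolmogorov--Bochner type projective-limit construction whose crux is a compactness argument. \emph{Necessity:} suppose $\mathbb{P}_{\mathbb{A}}$ exists with $\mathbb{P}_{\alpha}=\mathbb{P}_{\mathbb{A}}\circ\pi_{\alpha}^{-1}$ for every $\alpha\in D$. Conditions (1) and (2) are structural facts about coordinate projections and hold regardless of the measures, while for (3) one uses $\pi_{\alpha}=\pi_{\alpha\beta}\circ\pi_{\beta}$ (for $\alpha\le\beta$) to compute $\mathbb{P}_{\alpha}=\mathbb{P}_{\mathbb{A}}\circ\pi_{\alpha}^{-1}=(\mathbb{P}_{\mathbb{A}}\circ\pi_{\beta}^{-1})\circ\pi_{\alpha\beta}^{-1}=\mathbb{P}_{\beta}\circ\pi_{\alpha\beta}^{-1}$, which is the required consistency.

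For \emph{sufficiency}, assume the system is projective and let $\mathcal{A}:=\bigcup_{\alpha\in D}\pi_{\alpha}^{-1}(\mathcal{X}_{\alpha})$ be the algebra of cylinder sets, which generates $\bigotimes_{t\in\mathbb{A}}\mathcal{X}_{t}$. First I would set $\mathbb{P}_{\mathbb{A}}(\pi_{\alpha}^{-1}(A)):=\mathbb{P}_{\alpha}(A)$ and verify this is well defined: if $\pi_{\alpha}^{-1}(A)=\pi_{\beta}^{-1}(B)$, choose $\gamma\ge\alpha,\beta$ in the directed set $D$, pull both bases back to $\Sigma_{\gamma}$ using (1), identify them via surjectivity of the projection, and apply (3) to see that the two assigned values coincide. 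The same device---writing finitely many cylinders over a common index---gives finite additivity, so $\mathbb{P}_{\mathbb{A}}$ is a finitely additive probability content on $\mathcal{A}$.

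The heart of the argument, and the step I expect to be the main obstacle, is upgrading finite to countable additivity, equivalently proving continuity at $\emptyset$. Given cylinders $C_{n}\downarrow\emptyset$ I would assume for contradiction that $\mathbb{P}_{\mathbb{A}}(C_{n})\ge\varepsilon>0$. Using directedness I arrange the base indices $\alpha_{n}$ to be increasing, and then---this is exactly where inner regularity relative to compact sets is indispensable---shrink each base to a compact set, obtaining compact cylinders $\widetilde{K}_{n}=\pi_{\alpha_{n}}^{-1}(K_{n})$ with $\mathbb{P}_{\mathbb{A}}(C_{n}\setminus\widetilde{K}_{n})<\varepsilon 2^{-n-1}$. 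The decreasing cylinders $D_{n}=\bigcap_{j\le n}\widetilde{K}_{j}$ then have $\mathbb{P}_{\mathbb{A}}(D_{n})\ge\varepsilon/2$, hence are nonempty, each with compact base $L_{n}\subseteq\Sigma_{\alpha_{n}}$ satisfying $\pi_{\alpha_{m}\alpha_{n}}(L_{n})\subseteq L_{m}$ for $m\le n$. For each fixed $m$ the sets $\pi_{\alpha_{m}\alpha_{n}}(L_{n})$ decrease in $n$ inside the compact $L_{m}$, so by the finite intersection property their intersection is nonempty; an inverse-limit-of-compacta selection yields a coherent family $\xi_{m}\in\Sigma_{\alpha_{m}}$ with $\pi_{\alpha_{m}\alpha_{m+1}}(\xi_{m+1})=\xi_{m}$, which assembles into a point of $\bigcap_{n}D_{n}\subseteq\bigcap_{n}C_{n}$, contradicting $C_{n}\downarrow\emptyset$. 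The delicacy here is that the product topology is invisible to the product $\sigma$-algebra, so only the compact inner regularity lets one extract an honest point of the intersection. Once countable additivity on $\mathcal{A}$ is secured, the Carath\'eodory extension theorem produces a probability measure $\mathbb{P}_{\mathbb{A}}$ on $\bigotimes_{t\in\mathbb{A}}\mathcal{X}_{t}$ satisfying \eqref{rao_conc}, and uniqueness follows because $\mathcal{A}$ is a generating algebra on which $\mathbb{P}_{\mathbb{A}}$ is determined by the family $\{\mathbb{P}_{\alpha}\}$.
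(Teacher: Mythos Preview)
The paper does not actually prove this theorem: it is quoted verbatim as a ``seminal result'' from Rao~\cite{rao}, and the only subsequent remark in the paper (``As shown in the proof of Theorem~1 (see~\cite{rao})\ldots'') refers the reader to that source. So there is no in-paper proof to compare your proposal against.

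That said, your outline is the standard Kolmogorov--Bochner projective-limit argument and is, in substance, the same proof one finds in Rao's paper. The necessity direction is exactly as you describe. For sufficiency, your three phases---well-definedness and finite additivity of the set function on the cylinder algebra $\mathcal{A}$ via a common refinement $\gamma\ge\alpha,\beta$; the continuity-at-$\emptyset$ upgrade using inner compact regularity to replace each cylinder base by a compact $K_{n}$ and then extracting a coherent thread from the nested compact images $\pi_{\alpha_{m}\alpha_{n}}(L_{n})$; and finally Carath\'eodory extension plus uniqueness on a generating $\pi$-system---match the classical treatment. Your identification of the compactness step as the crux is correct: without inner regularity the finitely additive content need not be $\sigma$-additive, and the whole construction collapses.

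One small technical caution worth making explicit in a full write-up: the argument that the decreasing images $\pi_{\alpha_{m}\alpha_{n}}(L_{n})$ have nonempty intersection inside $L_{m}$ uses that continuous images of compact sets are compact \emph{and} that nested nonempty compact sets intersect nontrivially; the latter relies on each $K_{n}$ being closed in its ambient space, which in turn is automatic only under a Hausdorff assumption on the $\Sigma_{t}$. Rao's formulation handles this by working with compact sets that lie in the $\sigma$-field and by phrasing the intersection argument carefully, so if you flesh this out you should either add the Hausdorff hypothesis or mimic that care.
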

Due to Corollary 15.27 of \cite{infdim}, the same result holds without the inner regularity of $\mathbb{P}_{\{\cdot\}}$ whenever $\Sigma_{t}$ is a Polish space for each $t\in{\mathbb{A}}$. Furthermore, the same result holds if $D$ is the set of countably finite subsets of $\mathbb{A}$ (see Corollary 4.9.16 of 
 \cite{countable}).
 
Let $\mathbb{A} = [t_{0},T)\subset \mathbb{T}$. 
As shown in the proof of Theorem 1 (see \cite{rao}), the projective limit $\sigma - $ algebra, $\bigotimes_{t\in\mathbb{A}}\mathcal{X}_{t}$, is generated by $ \bigcup_{\alpha\in D}\pi^{-1}_{\alpha}\left(\mathcal{X}_{\alpha}\right)$; that is, 
$$
\bigotimes_{t\in\mathbb{T}}\mathcal{X}_{t} = \sigma \left(\bigcup_{\alpha\in D}\pi^{-1}_{\alpha}\left(\mathcal{X}_{\alpha}\right) \right).
$$
If $\alpha,\beta\in D$ with $\alpha < \beta,$ then due to (1) of Theorem 1 we have
\begin{equation}\label{filtration}
\pi_{\alpha}^{-1}(\mathcal{X}_{\alpha}) = \left( \pi_{\alpha\beta}\circ\pi_{\beta}\right)^{-1}(\mathcal{X}_{\alpha}) \subset \pi_{\beta}^{-1}(\mathcal{X}_{\beta}).
\end{equation}
Consequently, $\left( \pi_{\alpha}^{-1}(\mathcal{X}_{\alpha})\right)_{\alpha \in D}$ is a filtration ordered by set inclusion which generates $\bigotimes_{t\in\mathbb{A}}\mathcal{X}_{t}$ and from (\ref{rao_conc}) we have
\begin{equation}\label{arb_filter}
\mathbb{P}_{\mathbb{A}}\mid_{\pi_{\alpha}^{-1}(\mathcal{X}_{\alpha})} = \mathbb{P}_{\alpha}\circ \pi_{\alpha}.
\end{equation}
In our case, we assume that $\Sigma_{t} = \Sigma$ and $\mathcal{X}_{t} = \mathcal{X}$ for all $ t\in\mathbb{T}.$

Now let $s,r > 0$ be such that $\left( t_{0} - \max{(s,r)}, T\right) \subset \mathbb{T}$.  The numbers $s$ and $r$ are in place to act as the analogues of the positive integers $k$ and $l$ in Definition \ref{disc_TE}.  For each $\Delta t>0$ define the {\em comb set} $D_{\Delta t}\subset \mathbb{T}$ by 
\begin{equation*}
    \begin{aligned}
   D_{\Delta t} & = 
   \Big\{\left\lfloor \frac{t_{0}}{\Delta t}\right\rfloor\Delta t - \left(\left\lfloor \frac{W}{\Delta t}\right\rfloor -1\right)\Delta t,\dots, \left\lfloor\frac{t_{0}}{\Delta t}\right\rfloor\Delta t, \left\lfloor\frac{ t_{0}}{\Delta t}\right\rfloor\Delta t + \Delta t,  \dots\\
   & \qquad \dots, \left\lfloor\frac{T}{\Delta t}\right\rfloor\Delta t -2\Delta t,\left\lfloor\frac{T}{\Delta t}\right\rfloor\Delta t -\Delta t ,  \left\lfloor\frac{T}{\Delta t}\right\rfloor\Delta t \Big\}
    \end{aligned}
\end{equation*}
where $W = \max{(s,r)}$.
\begin{Notation} Henceforth, we will let $\tau = \left\lfloor\frac{T}{\Delta t} \right\rfloor- \left\lfloor\frac{t_{0}}{\Delta t}\right\rfloor$ and $\left\langle T, i, \Delta t\right\rangle = \left\lfloor\frac{T}{\Delta t} \right\rfloor \Delta t - i\Delta t$ for $\Delta t>0, i = 0,1, \dots, \tau - 1$.
\end{Notation}
Given $\Delta t>0$ we can use the comb set $D_{\Delta t}$ to construct two probability measures on the measurable space $\left(\Sigma^{\tau}, \bigotimes_{i=0}^{\tau -1}\mathcal{X}\right)$. Specifically, for $\Delta t>0$ let $A_{m}^{\Delta t, X} = \left\{ X_{m}\in B_{m}\right\}$, $A_{m}^{\Delta t, Y} = \left\{ Y_{m}\in B_{m}\right\}$, $X_{m,k}^{\Delta t} = \sigma\left(\left(X_{\left\langle T,m+k+1,\Delta t \right\rangle}^{\left\langle T,m+1,\Delta t \right\rangle}\right)\right)$, and $Y_{m, k,l}^{\Delta t} = \sigma\left(\left(Y_{\left\langle T,m+l+1,\Delta t \right\rangle }^{\left\langle T,m+1,\Delta t \right\rangle}\right)\right)$ 
 for each $m = 0,1,\cdots, \tau-1$.
 Then 
\begin{equation}\label{eq.1}
\prod_{i=0}^{\tau-1} \mathbb{P}_{ \left\langle T, i, \Delta t\right\rangle}\left (A_{ \left\langle T, i , \Delta t\right\rangle  }^{\Delta t, X}\middle| \alpha_{X}^{i,\Delta t}\right) 
=\prod_{i=0}^{\tau-1}\left(\mathbb{P}_{\left\langle T, i, \Delta t\right\rangle}\left( X_{\left\langle T, i, \Delta t\right\rangle}  \middle|  X_{i,k}^{\Delta t}\right) \right)(\omega)\left( B_{\left\langle T, i, \Delta t\right\rangle}\right)
\end{equation}
for some $\omega \in \Omega$ where $k = \left\lfloor\frac{s}{\Delta t}\right\rfloor$ 
and 
$\alpha_{X}^{i,\Delta t} = \bigcap _{ j = \left\lfloor\frac{T}{\Delta t}\right\rfloor-( i+\left\lfloor \frac{s}{\Delta t}\right\rfloor +1) }^{\left\lfloor\frac{T}{\Delta t}\right\rfloor -(i+1) }A_{j\Delta t}^{\Delta t, X}$.  Similarly,
\begin{equation}\label{eq.2}\begin{aligned}
 & \prod_{i=0}^{\tau-1} \mathbb{P}_{ \left\langle T, i, \Delta t\right\rangle}\left (A_{\left\langle T, i, \Delta t\right\rangle   }^{\Delta t, X} \middle| \left( \alpha_{X}^{i,\Delta t}\right)\bigcap \left( \alpha_{Y}^{i,\Delta t}\right)\right)
 \\& =  \prod_{i=0}^{\tau-1}\left(\mathbb{P}_{\left\langle T, i, \Delta t\right\rangle}\left( X_{\left\langle T, i, \Delta t\right\rangle} \middle|  X_{i,k}^{\Delta t}, Y_{i,k,l}^{\Delta t}\right) \right)(\omega)\left( B_{\left\langle T, i, \Delta t\right\rangle}\right),
\end{aligned}\end{equation}
for some $\omega \in \Omega$ where $l = \left\lfloor\frac{r}{\Delta t}\right\rfloor$ and $\alpha_{Y}^{i,\Delta t} = \bigcap_{ j = \left\lfloor\frac{T}{\Delta t}\right\rfloor-( i+\left\lfloor \frac{r}{\Delta t}\right\rfloor +1) }^{ \left\lfloor\frac{T}{\Delta t}\right\rfloor -(i+1) }A_{j\Delta t}^{\Delta t, Y}$. 
Given $\omega \in \Omega$ and $ \Delta t>0$ define the measures $\mathbb{P}_{X\mid \overleftarrow{X},i,\Delta t}^{(\omega),(k)}$ and  $\mathbb{P}_{X\mid \overleftarrow{X},\overleftarrow{Y},i,\Delta t}^{(\omega),(k,l)}$ on the space $\left(\Sigma, \mathcal{X} \right)$ for each $i = 0,1,\cdots, \tau-1$ by \begin{equation}\label{fdd_xy}
\begin{aligned}
\mathbb{P}_{X\mid \overleftarrow{X},i,\Delta t}^{(\omega),(k)}\left( B_{\left\langle T, i, \Delta t\right\rangle}\right)  = \left(\mathbb{P}_{\left\langle T, i, \Delta t\right\rangle}\left( X_{\left\langle T, i, \Delta t\right\rangle}   \big| X_{i,k}^{\Delta t}\right) \right)(\omega)\left( B_{\left\langle T, i, \Delta t\right\rangle}\right)
\end{aligned}
\end{equation} and
\begin{equation}\label{fdd_x}
\begin{aligned}
&\mathbb{P}_{X\mid \overleftarrow{X},\overleftarrow{Y},i,\Delta t}^{(\omega),(k,l)} \left( B_{\left\langle T, i, \Delta t\right\rangle}\right)= \left(\mathbb{P}_{\left\langle T, i, \Delta t\right\rangle}\left( X_{\left\langle T, i, \Delta t\right\rangle}   \big| X_{i,k}^{\Delta t}, Y_{i,k,l}^{\Delta t}\right) \right)(\omega)\left( B_{\left\langle T, i, \Delta t\right\rangle}\right).
\end{aligned}
\end{equation}
\begin{Notation}
For $\Delta t',\Delta t>0$ we write $\Delta t' \mid \Delta t$ whenever there exists a positive integer $m$ such that $\Delta t = m\Delta t'.$
\end{Notation}
Suppose $k=\left\lfloor \frac{s}{\Delta t}\right\rfloor$ and $l=\left\lfloor \frac{r}{\Delta t}\right\rfloor$.  If for each $\omega\in\Omega$ the systems 
$$
\left\{\left(\Sigma^{\tau}, \bigotimes^{\tau}\mathcal{X},\prod_{i=0}^{\tau-1}\mathbb{P}_{X\mid \overleftarrow{X},i,\Delta t}^{(\omega),(k)},\pi_{D_{\Delta t}D_{\Delta t'}}\right)_{ \substack{0<\Delta t' < \Delta t \\ \Delta t' \mid \Delta t }} : \Delta t>0\right\}
$$
and 
$$
\left\{\left(\Sigma^{\tau}, \bigotimes^{\tau}\mathcal{X},\prod_{i=0}^{\tau-1}\mathbb{P}_{X\mid \overleftarrow{X},\overleftarrow{Y},i,\Delta t}^{(\omega),(k,l)},\pi_{D_{\Delta t}D_{\Delta t'}}\right)_{ \substack{0<\Delta t' < \Delta t \\ \Delta t' \mid \Delta t }} : \Delta t>0\right\}
$$
are projective systems with respect to coordinate projections $\left\{ \pi_{D_{\Delta t}D_{\Delta t'}}\right\}$, then as a consequence of Theorem \ref{rao_theorem}, there exist unique probability measures $$\mathbb{P}_{X}^{(s)}[X_{t_{0}}^{T}\mid X_{t_{0} - s}^{t_{0}} ](\omega)$$ and $$\mathbb{P}_{X\mid X, Y}^{(s,r)}[X_{t_{0}}^{T}\mid X_{t_{0} - s}^{t_{0}},  Y_{t_{0} - r}^{T}](\omega)$$ on the measurable space $\left(\times_{t\in[t_{0},T)}\Sigma, \bigotimes_{t\in[t_{0},T)}\mathcal{X} \right)$ such that
\begin{equation}\label{X_given_X}
\mathbb{P}_{X}^{(s)}[X_{t_{0}}^{T}\mid X_{t_{0} - s}^{t_{0}} ](\omega)\Big|_{\mathcal{F}_{{\Delta t}}^{[t_{0},T)}} = \left(\prod_{i=0}^{\tau-1}\mathbb{P}_{X\mid \overleftarrow{X},i,\Delta t}^{(\omega),(k)}\right)\circ \pi_{ D_{\Delta t} }
\end{equation}
and 
\begin{equation}\label{X_given_XY}
\mathbb{P}_{X\mid X, Y}^{(s,r)}[X_{t_{0}}^{T}\mid X_{t_{0} - s}^{t_{0}},  Y_{t_{0} - r}^{T}](\omega)\Big|_{\mathcal{F}_{{\Delta t}}^{[t_{0},T)}} = \left(\prod_{i=0}^{\tau-1}\mathbb{P}_{X\mid \overleftarrow{X},\overleftarrow{Y}, i,\Delta t}^{(\omega),(k,l)}\right)\circ \pi_{ D_{\Delta t} }
\end{equation}
where $\mathcal{F}_{\Delta t}^{[t_{0},T)} = \pi_{D_{\Delta t}}^{-1}(\mathcal{X}_{D_{\Delta t}})$. 

\begin{Notation} Let $\Omega_{X}^{[t_{0},T)}$ denote the set of sample paths of $X$.
\end{Notation}

\section{Pathwise transfer entropy and expected pathwise transfer entropy}
\label{PT_EPT_sec}
The purpose of this section is to use the  measures $ \mathbb{P}_{X}^{(s)}[X_{t_{0}}^{T}\mid X_{t_{0} - s}^{t_{0}} ](\cdot)$ and $\mathbb{P}_{X \mid   X, Y}^{(s,r)}\left[X_{t_{0}}^{T}\mid X_{t_{0} - s}^{t_{0}},  Y_{t_{0} - r}^{T}\right](\cdot)$ to define transfer entropy over an interval of the form $[t_{0}, T)\subset \mathbb{T}$ with history window lengths $r,s>0$.
\begin{definition}
Suppose $\mathbb{T}\subset \mathbb{R}_{\geq 0}$ is a closed and bounded interval, $[t_{0},T) \subset \mathbb{T}$; $r,s > 0$; and for each $\omega \in \Omega$ the measures $\mathbb{P}_{X\mid X, Y}^{(s,r)}[X_{t_0}^{T}\mid X_{t_0 - s}^{T},  Y_{t_{0} - r}^{T}](\omega)$ and $\mathbb{P}_{X}^{(s)}[X_{t_0}^{T}\mid X_{t_0 - s}^{T} ](\omega)$ exist.  If $\left( t_{0} - \max\left(s,r\right), T\right) \subset \mathbb{T}$, then for any sample path $x_{t_{0}}^{T} \in \Omega_{X}^{[t_{0},T)},$ define the {\em pathwise transfer entropy from} $Y$ {\em to} $X$ {\em on} $[t_{0},T)$ {\em at} $x_{t_{0}}^{T}$ {\em with history window lengths} $r$ {\em and} $s$, denoted ${\mathcal{P}\mathcal{T}}_{Y \rightarrow X}^{(s,r)}\mid_{t_0}^{T}(\omega,x_{t_{0}}^{T} )$, by 
\begin{equation}\label{PT}
{\mathcal{P}\mathcal{T}}_{Y \rightarrow X}^{(s,r)}\mid_{t_0}^{T}(\omega, x_{t_{0}}^{T}) = \log{\frac{d\mathbb{P}_{X\mid X, Y}^{(s,r)}[X_{t_0}^{T}\mid X_{t_{0} - s}^{t_{0}},  Y_{t_{0} - r}^{T}](\omega)}{d\mathbb{P}_{X}^{(s)}[X_{t_0}^{T}\mid X_{t_{0} - s}^{t_{0}} ](\omega)}}\left( x_{t_{0}}^{T}\right)
\end{equation}
if $\mathbb{P}_{X\mid X, Y}^{(s,r)}[X_{t_0}^{T}\mid X_{t_0 - s}^{t_{0}},  Y_{t_{0} - r}^{T}](\omega) \ll  \mathbb{P}_{X}^{(s)}[X_{t_0}^{T}\mid X_{t_0 - s}^{t_{0}} ](\omega)$ for all $\omega\in\Omega$ and $\infty$ otherwise.
\end{definition}
\begin{Observation}
 For each $\omega \in \Omega$, ${\mathcal{P}\mathcal{T}}_{Y \rightarrow X}^{(s,r)}\mid_{t_0}^{T}(\omega, \cdot)$ maps $\Omega_{X}^{[t_{0},T)}$ into the extended real line $\mathbb{R}\cup\{\infty\}$ and ${\mathcal{P}\mathcal{T}}_{Y \rightarrow X}^{(s,r)}\mid_{t_0}^{T}(\omega, \cdot)$ is unique  $ \mathbb{P}_{X}^{(s)}[X_{t_0}^{T}\mid X_{t_0 - s}^{t_{0}} ](\omega)$-\textrm{a.s.} due to the Radon-Nikodym Theorem.
\end{Observation}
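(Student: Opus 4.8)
The plan is to verify the two assertions in the Observation---that $\mathcal{P}\mathcal{T}_{Y \rightarrow X}^{(s,r)}\mid_{t_0}^{T}(\omega,\cdot)$ is an extended-real-valued map on $\Omega_{X}^{[t_{0},T)}$, and that it is determined $\mathbb{P}_{X}^{(s)}[X_{t_0}^{T}\mid X_{t_0-s}^{t_0}](\omega)$-a.s.---by splitting along the dichotomy built into the definition and invoking the Radon-Nikodym Theorem on the absolutely continuous branch. Fix $\omega\in\Omega$ and abbreviate $\nu_{\omega} = \mathbb{P}_{X\mid X, Y}^{(s,r)}[X_{t_0}^{T}\mid X_{t_0-s}^{t_0}, Y_{t_0-r}^{T}](\omega)$ and $\mu_{\omega} = \mathbb{P}_{X}^{(s)}[X_{t_0}^{T}\mid X_{t_0-s}^{t_0}](\omega)$. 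By the construction of Section \ref{CPM} (via Theorem \ref{rao_theorem}), both $\nu_{\omega}$ and $\mu_{\omega}$ are \emph{probability} measures on the common measurable space $\left(\times_{t\in[t_{0},T)}\Sigma, \bigotimes_{t\in[t_{0},T)}\mathcal{X}\right)$, i.e.\ on $\Omega_{X}^{[t_{0},T)}$; in particular each is finite and hence $\sigma$-finite, which is exactly the hypothesis needed for Radon-Nikodym.

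First I would treat the branch where $\nu_{\omega}\ll\mu_{\omega}$. Here the Radon-Nikodym Theorem applies: since $\mu_{\omega}$ is $\sigma$-finite and $\nu_{\omega}\ll\mu_{\omega}$, there is a $\bigotimes_{t}\mathcal{X}$-measurable density $g_{\omega} = d\nu_{\omega}/d\mu_{\omega}\colon \Omega_{X}^{[t_{0},T)}\to[0,\infty]$ with $\nu_{\omega}(A)=\int_{A}g_{\omega}\,d\mu_{\omega}$ for every measurable $A$, and this $g_{\omega}$ is \emph{unique up to a $\mu_{\omega}$-null set}. Because $\nu_{\omega}$ is finite, $\int g_{\omega}\,d\mu_{\omega}=\nu_{\omega}(\Omega_{X}^{[t_{0},T)})=1<\infty$, so $g_{\omega}$ is finite $\mu_{\omega}$-a.e. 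Composing with the Borel map $\log$, we obtain $\mathcal{P}\mathcal{T}_{Y\rightarrow X}^{(s,r)}\mid_{t_0}^{T}(\omega,\cdot)=\log g_{\omega}$, which is measurable and takes values in the extended real line, the value $-\infty$ being confined to the set $\{g_{\omega}=0\}$. Its $\mu_{\omega}$-a.s.\ uniqueness is inherited directly from the uniqueness of $g_{\omega}$: any two versions of the density agree off a common $\mu_{\omega}$-null set, and $\log$ is a deterministic transformation of the density, so the two resulting maps agree off that same null set.

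In the complementary branch, where $\nu_{\omega}\not\ll\mu_{\omega}$, the definition sets $\mathcal{P}\mathcal{T}_{Y\rightarrow X}^{(s,r)}\mid_{t_0}^{T}(\omega,\cdot)\equiv\infty$, a constant map into $\mathbb{R}\cup\{\infty\}$ that is trivially measurable and, being deterministic, trivially $\mu_{\omega}$-a.s.\ unique. Combining the two branches yields the claim for every $\omega$: the map sends $\Omega_{X}^{[t_{0},T)}$ into the extended real line and is $\mu_{\omega}$-a.s.\ unique.

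The step requiring the most care is not any computation but the bookkeeping around the hypotheses and the codomain of Radon-Nikodym on this path space. I would confirm that the two conditional path measures really live on one and the same measurable space $\Omega_{X}^{[t_{0},T)}$ (so the quotient of measures, hence the density, is meaningful), that $\sigma$-finiteness is genuinely in force (immediate from their being probability measures), and that the $\log$ transform is treated as an extended-real object---the possible value $-\infty$ on $\{g_{\omega}=0\}$ (a $\nu_{\omega}$-null but possibly $\mu_{\omega}$-positive set) is harmless for the a.s.\ statement and is precisely why the range must be the extended reals rather than $\mathbb{R}$. The only genuinely delicate point worth flagging is that uniqueness is asserted relative to $\mu_{\omega}$, the denominator measure, which is exactly the measure against which the Radon-Nikodym derivative is unique; the two notions of ``a.s.'' therefore coincide and no further argument is needed.
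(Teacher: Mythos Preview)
Your proposal is correct and is essentially the paper's approach: the paper states this as an Observation without proof, simply attributing it to the Radon-Nikodym Theorem, and you have correctly filled in the routine verification (finiteness/$\sigma$-finiteness of the path measures, existence and $\mu_\omega$-a.s.\ uniqueness of the density, and the trivial $\infty$ branch). Your remark that $\log g_\omega$ may take the value $-\infty$ on $\{g_\omega=0\}$ is apt---the paper's codomain $\mathbb{R}\cup\{\infty\}$ appears to be a minor imprecision, and your reading of ``extended real line'' as $[-\infty,\infty]$ is the natural fix.
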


The following is our definition of transfer entropy over an interval of the form $[t_{0},T)$\footnote{ One could, in principle, construct a similar definition in the case that the interval is of the form $[t_{0},T]$, via following the procedure outlined in Section \ref{CPM} with comb sets of the form $\widetilde{D_{\Delta t}} :=\left\{T, T-\Delta t, T-2\Delta t, \dots, T - \left\lfloor\frac{\max{(s,r)}}{\Delta t}\right\rfloor \Delta t \right\}$ rather than $D_{\Delta t}$.}.
\begin{definition}\label{EPT_def}
Suppose $\mathbb{T}\subset \mathbb{R}_{\geq 0}$ is a closed and bounded interval, $[t_{0},T) \subset \mathbb{T}$; $r,s > 0$; and for each $\omega \in \Omega$ the measures $\mathbb{P}_{X\mid X, Y}^{(s,r)}[X_{t_0}^{T}\mid X_{t_0 - s}^{T},  Y_{t_{0} - r}^{T}](\omega)$ and $\mathbb{P}_{X}^{(s)}[X_{t_0}^{T}\mid X_{t_0 - s}^{T} ](\omega)$ exist.  If $\left( t_{0} - \max\left(s,r\right), T\right) \subset \mathbb{T}$, the {\em expected pathwise transfer entropy (EPT) from} $Y$ {\em to} $X$ {\em on} $[t_{0},T)$ {\em with history window lengths} $r$ {\em and} $s$, denoted ${\mathcal{E}\mathcal{P}\mathcal{T}}_{Y \rightarrow X}^{(s,r)}\mid_{t_0}^{T}$, is defined by 
\begin{equation}\label{EPT}
\mathcal{E}\mathcal{P}\mathcal{T}_{Y \rightarrow X}^{(s,r)}\mid_{t_0}^{T}= 
\mathbb{E}_{\mathbb{P}}\left[
\mathbb{E}_{\mathbb{P}_{X\mid X, Y}^{(s,r)}}\left[\log{\frac{d\mathbb{P}_{X\mid X, Y}^{(s,r)}\left[X_{t_0}^{T}\middle| X_{t_{0} - s}^{t_{0}},  Y_{t_{0} - r}^{T}\right]}{d\mathbb{P}_{X}^{(s)}\left[X_{t_0}^{T}\middle| X_{t_{0} - s}^{t_{0}} \right]}}\right]\right]
\end{equation}
if $\mathbb{P}_{X\mid X, Y}^{(s,r)}\left[X_{t_{0}}^{T}\mid X_{t_{0} - s}^{t_{0}},  Y_{t_{0} - r}^{T}\right](\omega) \ll  \mathbb{P}_{X}^{(s)}\left[X_{t_0}^{T}\mid X_{t_{0} - s}^{t_{0}} \right](\omega)$ for all $\omega\in \Omega$
and $\infty$ otherwise.
\end{definition}

For the sake of clarity we emphasize that the expectation in (\ref{EPT}) is understood as the integral
\begin{equation}\label{explain}
\mathbb{E}_{\mathbb{P}}\left[
\mathbb{E}_{\mathbb{P}_{X\mid X, Y}^{(s,r)}}\log{\frac{d\mathbb{P}_{X\mid X, Y}^{(s,r)}\left[X_{t_0}^{T}\middle| X_{t_{0} - s}^{t_{0}},  Y_{t_{0} - r}^{T}\right]}{d\mathbb{P}_{X}^{(s)}\left[X_{t_0}^{T}\middle| X_{t_{0} - s}^{t_{0}} \right]}}\right] =  \int_{\Omega} KL(\omega) \; d\mathbb{P}(\omega)
\end{equation}
where 
\begin{equation*}
    \begin{aligned}
KL(\omega)= 
& \int_{\Omega_{X}^{[t_{0},T)}} \log\left[ 
\frac{d\mathbb{P}_{X\mid X, Y}^{(s,r)}[X_{t_0}^{T}\mid X_{t_o - s}^{t_{0}},  Y_{t_{0} - r}^{T}](\omega)}{d\mathbb{P}_{X}^{(s)}[X_{t_0}^{T}\mid X_{t_o - s}^{t_{0}} ](\omega)}\left(x_{t_{0}}^{T}\right)\right] \\& \qquad d\mathbb{P}_{X\mid X, Y}^{(s,r)}[X_{t_0}^{T}\mid X_{t_o - s}^{t_{0}},  Y_{t_{0} - r}^{T}](\omega)
    \end{aligned}
\end{equation*}
and note that this is similar to the expression in (\ref{TE_def}) for discrete time TE in that it is an expectation of a KL-divergence among conditional measures induced by the dynamics of $X$ and $Y$.   

\section{Obtaining continuous time TE as a limit of discrete time TE}
\label{a recasting}

We now pursue conditions under which the EPT can be represented as a limit of discrete time TE.  We first prove two lemmas that will be used in the proof of our main theorem; then we define a type of consistency between processes that makes the expressions in the main result meaningful; then we provide our main result, Theorem \ref{main}, and conclude with some of its consequences.

\begin{lemma}\label{fixedR-N}
Suppose $N \geq 1$ and $\{\mu_{i}\}_{i \geq 1}$ and $\{\nu_{i}\}_{i \geq 1}$ are finite measures on the measurable space $\left(\mathcal{X}, \Sigma\right)$ with $\mu_{i}\ll \nu_{i}$ for $i=1,...,N.$   Let $\mu = \prod_{i=1}^{N}\mu_{i}$ and $\nu = \prod_{i=1}^{N}\nu_{i}$ be product measures on the space $\left(\mathcal{X}^{N}, \otimes^{N}\Sigma\right)$.  Then $\mu \ll \nu$ and
 $$\prod_{i=1}^{N}\frac{d\mu_{i}}{d\nu_{i}}\left( \pi_{i}(x_{1},x_{2},...,x_{N})\right) = \frac{d\mu}{d\nu}\left( x_{1},x_{2},...,x_{N} \right),\, \nu-a.e.$$
where $x_{i}\in \mathcal{X}$ for $i \in [N].$

\end{lemma}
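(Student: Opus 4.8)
The plan is to exhibit the explicit candidate function $f := \prod_{i=1}^{N}\frac{d\mu_i}{d\nu_i}\circ\pi_i$ and verify that it satisfies the defining property of $\frac{d\mu}{d\nu}$ by checking it on a generating $\pi$-system and then extending by uniqueness of measures. First I would set $f_i := \frac{d\mu_i}{d\nu_i}$, which exists as a nonnegative, $\Sigma$-measurable function (well-defined $\nu_i$-a.e.) by the Radon--Nikodym theorem, applicable since each $\mu_i \ll \nu_i$ and all the measures are finite. Defining $f(x_1,\dots,x_N) := \prod_{i=1}^{N} f_i(x_i)$, I would note that each projection $\pi_i$ is $\left(\otimes^{N}\Sigma,\Sigma\right)$-measurable and that finite products of nonnegative measurable functions are measurable, so $f$ is nonnegative and $\otimes^{N}\Sigma$-measurable.

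Next I would establish the identity $\mu(A) = \int_A f\, d\nu$ first on measurable rectangles $A = A_1\times\cdots\times A_N$ with $A_i\in\Sigma$. By Tonelli's theorem, valid because $f\geq 0$ and each $\nu_i$ is finite hence $\sigma$-finite, the integral factors:
$$\int_A f\, d\nu = \prod_{i=1}^{N}\int_{A_i} f_i\, d\nu_i = \prod_{i=1}^{N}\mu_i(A_i) = \mu(A),$$
where the middle equality is exactly the defining property of $f_i = \frac{d\mu_i}{d\nu_i}$. To upgrade this from rectangles to all of $\otimes^{N}\Sigma$, I would observe that the rectangles form a $\pi$-system generating $\otimes^{N}\Sigma$, and that both $A\mapsto\mu(A)$ and $A\mapsto\int_A f\, d\nu$ are finite measures agreeing on that $\pi$-system and on the whole space; the latter is finite because $\int f\, d\nu = \prod_i \mu_i(\mathcal{X}) < \infty$. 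By the uniqueness theorem for measures coinciding on a generating $\pi$-system (Dynkin's $\pi$--$\lambda$ theorem), they agree on all of $\otimes^{N}\Sigma$. Hence $\mu(A)=\int_A f\, d\nu$ for every $A\in\otimes^{N}\Sigma$, which simultaneously shows $\mu\ll\nu$ (since $\nu(A)=0$ forces $\mu(A)=\int_A f\, d\nu = 0$) and, by the uniqueness clause of the Radon--Nikodym theorem, that $f = \frac{d\mu}{d\nu}$ holds $\nu$-a.e., which is precisely the claimed formula.

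The result is essentially a bookkeeping exercise, so I expect no deep obstacle. The only point requiring mild care is the passage from rectangles to the full product $\sigma$-algebra, where I must confirm that both candidate set functions are genuinely finite measures so that the $\pi$--$\lambda$ uniqueness argument applies; this is immediate from the finiteness of the $\mu_i$ and $\nu_i$. An equally clean alternative would be to induct on $N$, reducing everything to the two-factor case $\mu_1\times\mu_2$ and applying the same Tonelli computation, but the simultaneous rectangle argument disposes of all $N$ at once.
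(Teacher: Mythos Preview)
Your proposal is correct. The paper's proof proceeds differently: it first argues $\mu\ll\nu$ directly, and then, for an \emph{arbitrary} set $E\in\otimes^{N}\Sigma$, applies Fubini to write $\mu(E)$ as an iterated integral over sections $E_{x_1,\dots,x_{N-1}}$, changing each $d\mu_i$ to $\frac{d\mu_i}{d\nu_i}\,d\nu_i$ one coordinate at a time, and concludes by the a.e.\ uniqueness of the Radon--Nikodym derivative. By contrast, you verify the integral identity only on the generating $\pi$-system of rectangles (where Tonelli makes it trivial) and then invoke the $\pi$--$\lambda$ uniqueness theorem to extend to the full product $\sigma$-algebra, obtaining $\mu\ll\nu$ as a free corollary of the integral representation. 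The paper's route is more direct once one accepts Fubini for arbitrary measurable sets in a product space; your route is slightly more modular and sidesteps any separate argument for absolute continuity. Both are standard and essentially equivalent in difficulty.
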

\begin{proof}
Clearly $\mu \ll \nu$ since $\forall A \in \otimes ^{N}\Sigma$ we have
\begin{equation*}
\begin{aligned}
&\nu(A)=0
\\&\implies \exists  j \leq N, \text{ such that }\nu_{j}(\pi_{j}(A))=0
\\&\implies \mu_{j}\left( \pi_{j}(A)\right)=0
\\&\implies \mu(A)=0.
\end{aligned}
\end{equation*}
Fix $E \in \otimes^{N}\Sigma$ and for $i = 1,2,\ldots,N$ let $$E_{x_{1},x_{2},...,x_{i}} = \{ (x_{i+1},x_{i+2},...,x_{N}\in \mathcal{X}^{N-i}): (x_{1},x_{2},...,x_{i},x_{i+1},...,x_{N})\in E\}$$
where $x_{i}\in \mathcal{X}, \forall i \in [N].$  Then from the Radon-Nikodym chain rule we obtain
\begin{align*}
\mu(A)&= \int_{\mathcal{X}}\ldots\int_{\mathcal{X}}\left(\chi_{E_{x_{1},x_{2},...,x_{N-1}}}(x_{N})\right)d\mu_{N}\left(x_{N}\right)...d\mu_{1}(x_{1})
\\& = \int_{\mathcal{X}}\ldots\int_{\mathcal{X}}\left(\chi_{E_{x_{1},x_{2},...,x_{N-2}}}\left(x_{N},x_{N-1}\right)\right)\prod_{i=1}^{N}\frac{d\mu_{i}}{d\nu_{i}}(x_{i})\prod_{i=1}^{N}d\nu_{i}(x_{i})
\\& \vdots\\
& = \int_{\mathcal{X}^{N}}\chi_{E}\left( x_{N},x_{N-1},\ldots,x_{2},x_{1} \right)\prod_{i=1}^{N}\frac{d\mu_{i}}{d\nu_{i}}(x_{i})\prod_{i=1}^{N}d\nu_{i}(x_{i})
\\& = \int_{E}\prod_{i=1}^{N}\frac{d\mu_{i}}{d\nu_{i}}(x_{i})\prod_{i=1}^{N}d\nu_{i}(x_{i}).
\\& =  \int_{E}\prod_{i=1}^{N}\frac{d\mu_{i}}{d\nu_{i}}(x_{i})d\nu\left( x_{1},\ldots,x_{N}\right).
\end{align*}
By the uniqueness of the RN-derivative we have
\begin{equation*}
\begin{aligned} \frac{d\mu}{d\nu}\left(x_{1},x_{2},...,x_{N} \right) &= \prod_{i=1}^{N}\frac{d\mu_{i}}{d\nu_{i}}\left( x_{i}\right) 
\\& = \prod_{i=1}^{N}\frac{d\mu_{i}}{d\nu_{i}}\left( \pi_{i}(x_{1},x_{2},...,x_{N})\right), \, \nu-\textrm{a.e.}
\end{aligned}
\end{equation*}
which completes the proof.
\end{proof}
The following lemma establishes convergence of KL-divergences in a manner which will be useful in the proof of our main result.

\begin{lemma}\label{KL_conv}
Suppose $(\Omega, \mathcal{F})$ is a measurable space.  Furthermore, suppose that $\left( \mathcal{F}_{\Delta t}\right)_{\Delta t >0}$ is a sequence of decreasing sub-$\sigma$-algebras of $\mathcal{F}$ such that $\mathcal{F} = \bigcap_{\Delta t>0} \mathcal{F}_{\Delta t}$   and that $P$ and $M$ are probability measures on $\left(\Omega, \mathcal{F} \right)$ with $P \ll M$. Let $P_{\Delta t} = P\mid_{\mathcal{F}_{\Delta t}}$ and $M_{\Delta t} = M\mid_{\mathcal{F}_{\Delta t}}$ for each $\Delta t>0$. If $\mathbb{E}_{P}\left[\log{\frac{dP}{dM}}\right]<\infty $, then  
 \begin{equation}\label{conc_lem}
 \mathbb{E}_{P_{\Delta t}}\left[\log{\frac{dP_{\Delta t}}{dM_{\Delta t}}}\right] \rightarrow \mathbb{E}_{P}\left[\log{\frac{dP}{dM}}\right] 
 \end{equation}
 as $\Delta t \downarrow 0$.
\end{lemma}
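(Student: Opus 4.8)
The plan is to recognize the left-hand side of (\ref{conc_lem}) as the relative entropy of $P$ with respect to $M$ computed on the coarsened $\sigma$-algebra $\mathcal{F}_{\Delta t}$, and then to reduce the statement to a martingale convergence together with a convexity/Fatou squeeze. Write $f = \frac{dP}{dM}$ for the density on $(\Omega,\mathcal{F})$, which exists since $P \ll M$, and set $\varphi(x) = x\log x$, a convex function bounded below by $-1/e$. For each $\Delta t > 0$ and each $A \in \mathcal{F}_{\Delta t}$ we have $P_{\Delta t}(A) = P(A) = \int_A f\, dM = \int_A \mathbb{E}_M[f \mid \mathcal{F}_{\Delta t}]\, dM_{\Delta t}$, the last equality because the integrand is $\mathcal{F}_{\Delta t}$-measurable and $M$ and $M_{\Delta t}$ agree on $\mathcal{F}_{\Delta t}$; hence $f_{\Delta t} := \frac{dP_{\Delta t}}{dM_{\Delta t}} = \mathbb{E}_M[f\mid\mathcal{F}_{\Delta t}]$ $M$-a.s. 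Consequently
\begin{equation*}
\mathbb{E}_{P_{\Delta t}}\left[\log\frac{dP_{\Delta t}}{dM_{\Delta t}}\right] = \mathbb{E}_{M}\left[f_{\Delta t}\log f_{\Delta t}\right] = \mathbb{E}_M\left[\varphi(f_{\Delta t})\right],
\end{equation*}
and likewise the right-hand side of (\ref{conc_lem}) equals $\mathbb{E}_M[\varphi(f)]$, which is finite by hypothesis. Thus (\ref{conc_lem}) becomes the assertion $\mathbb{E}_M[\varphi(f_{\Delta t})] \to \mathbb{E}_M[\varphi(f)]$ as $\Delta t \downarrow 0$.

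Next I would assemble two ingredients. First, \emph{monotonicity}: since the family $(\mathcal{F}_{\Delta t})$ is nested and its limit generates $\mathcal{F}$ as $\Delta t \downarrow 0$, for nested $\mathcal{F}_{\Delta t} \subseteq \mathcal{F}_{\Delta t'}$ the tower property gives $f_{\Delta t} = \mathbb{E}_M[f_{\Delta t'} \mid \mathcal{F}_{\Delta t}]$, and the conditional Jensen inequality applied to the convex $\varphi$ yields $\varphi(f_{\Delta t}) \le \mathbb{E}_M[\varphi(f_{\Delta t'}) \mid \mathcal{F}_{\Delta t}]$; integrating, $\mathbb{E}_M[\varphi(f_{\Delta t})] \le \mathbb{E}_M[\varphi(f_{\Delta t'})] \le \mathbb{E}_M[\varphi(f)]$. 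Hence $\mathbb{E}_M[\varphi(f_{\Delta t})]$ is monotone nondecreasing as $\Delta t \downarrow 0$ and bounded above by the finite quantity $\mathbb{E}_M[\varphi(f)]$, so it converges to some $L \le \mathbb{E}_M[\varphi(f)]$. Second, \emph{convergence of the densities}: because $f \in L^1(M)$ and $\mathcal{F}_{\Delta t}$ increases to $\mathcal{F}$, L\'evy's upward martingale convergence theorem gives $f_{\Delta t} = \mathbb{E}_M[f\mid\mathcal{F}_{\Delta t}] \to \mathbb{E}_M[f\mid\mathcal{F}] = f$ both $M$-a.s. and in $L^1(M)$, and continuity of $\varphi$ then gives $\varphi(f_{\Delta t}) \to \varphi(f)$ $M$-a.s.

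Finally I would pin the limit from below. Since $\varphi \ge -1/e$, the functions $\varphi(f_{\Delta t}) + 1/e$ are nonnegative and converge $M$-a.s. to $\varphi(f) + 1/e$, so Fatou's lemma yields $\mathbb{E}_M[\varphi(f)] \le \liminf_{\Delta t \downarrow 0} \mathbb{E}_M[\varphi(f_{\Delta t})] = L$. Combined with the upper bound $L \le \mathbb{E}_M[\varphi(f)]$ coming from monotonicity, this forces $L = \mathbb{E}_M[\varphi(f)]$, which is exactly (\ref{conc_lem}). The main obstacle is precisely this interchange of limit and integral: $\varphi$ is unbounded above, so a.s. convergence $f_{\Delta t} \to f$ alone does not justify passing the limit through $\mathbb{E}_M$, and there is no obvious integrable dominating function. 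The resolution is the two-sided squeeze—monotonicity, which supplies the upper bound and uses the finiteness hypothesis $\mathbb{E}_P[\log\frac{dP}{dM}] < \infty$ to keep the limit finite, paired with the lower bound from Fatou, made possible by the boundedness below of $\varphi$. I would also note that the argument is insensitive to the labeling of the monotonicity direction: all that is used is that $(\mathcal{F}_{\Delta t})$ is a monotone family whose limit is $\mathcal{F}$, so that the densities converge by the appropriate martingale convergence theorem.
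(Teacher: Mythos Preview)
Your proof is correct and follows essentially the same route as the paper's: identify $f_{\Delta t}=\mathbb{E}_M[f\mid\mathcal{F}_{\Delta t}]$, use martingale convergence to get $f_{\Delta t}\to f$ a.s., apply conditional Jensen for the upper bound $\limsup \mathbb{E}_M[\varphi(f_{\Delta t})]\le \mathbb{E}_M[\varphi(f)]$, and Fatou for the matching lower bound. Your version is in fact slightly cleaner---you make explicit that $\varphi\ge -1/e$ so Fatou applies to $\varphi(f_{\Delta t})+1/e$, and your monotonicity observation (via Jensen along the nested $\sigma$-algebras) is a nice extra that the paper does not state.
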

\begin{proof}
 Since probability measures are $\sigma-$finite, all RN-derivatives in (\ref{conc_lem}) exist. Suppose $\Delta t >0$.  Observe that for all $A \in \mathcal{F}_{\Delta t}$ we have that \begin{equation*}\label{mart}
\mathbb{E}_{M}\left[ \chi_{A}\frac{dP_{\Delta t}}{dM_{\Delta t}}\right]  = \mathbb{E}_{M}\left[ \chi_{A}\frac{dP}{dM}\right]
\end{equation*}
implying that
\begin{equation}\label{mart_1}
\mathbb{E}_{M}\left[ \frac{dP}{dM}\middle| \mathcal{F}_{\Delta t}\right] = \frac{dP_{\Delta t}}{dM_{\Delta t}}, M-\textrm{a.s.}
\end{equation}
from the definition of conditional expectation.
Define $\zeta_{\Delta t} = \frac{dP_{\Delta t}}{dM_{\Delta t}}$ for each $\Delta t>0.$  From (\ref{mart_1}), we get that $\left\{\zeta_{\Delta t}\right\}_{\Delta t>0}$ is a uniformly integrable backward martingale since $\zeta_{\Delta t}$ is clearly $M-$integrable for any $\Delta t>0$ by the Radon-Nikodym Theorem and \begin{equation*}
    \begin{aligned}
     \mathbb{E}_{M}\left[\zeta_{\Delta t}  \middle|   \mathcal{F}_{\Delta t'} \right]
     & = \mathbb{E}_{M}\left[\mathbb{E}_{M}\left[ \frac{dP}{dM} \middle|  \mathcal{F}_{\Delta t}\right]  \middle|   \mathcal{F}_{\Delta t'} \right]\\
     & = \mathbb{E}_{M}\left[\frac{dP}{dM}  \middle|   \mathcal{F}_{\Delta t'} \right] = \zeta_{\Delta t'}
    \end{aligned}
\end{equation*}
whenever $\Delta t'>\Delta t$ due to the tower property of conditional expectation.

We claim that 
\begin{equation}\label{rad_lim}
 \lim_{\Delta t \downarrow 0}\zeta_{\Delta t} = \frac{dP}{dM},  M-\textrm{a.s.}
\end{equation}
To see this, note first that the limit exists a.s and in $L_{1}$ due to Theorem 6.1 of \cite{Durrett}, i.e., there exists some nonnegative $\zeta \in L_{1}\left(\Omega, \mathcal{F}, M\right)$ such that
\begin{equation*}\label{L_1}
\mathbb{E}_{M}\left[\big|\zeta_{\Delta t} - \zeta\big|\right] \rightarrow 0 
\end{equation*}
as $\Delta t \downarrow 0$.
 Fix $\Delta t >0$ and suppose $A\in\mathcal{F}_{\Delta t}$. Then for all $0<\Delta t' <\Delta t$ we have that $A\in \mathcal{F}_{{\Delta t'}}$ since  $\left( \mathcal{F}_{\Delta t}\right)_{\Delta t >0}$ is a decreasing collection of $\sigma-$algebras. As a consequence of the Radon-Nikodym Theorem,  $P(A) =\mathbb{E}_{M}\left[\chi_{A} \zeta_{\Delta t'} \right]$, implying that $\mathbb{E}_{M}\left[\chi_{A} \zeta_{\Delta t'} \right]$ is  constant for $0<\Delta t' <\Delta t$. Consequently,
\begin{equation*}\label{annoy}
P(A) = \mathbb{E}_{M}\left[\chi_{A} \zeta_{\Delta t'} \right] = \mathbb{E}_{M}\left[\chi_{A} \zeta \right].
\end{equation*}
Furthermore, since $\mathcal{F} = \bigcap_{\Delta t>0}\mathcal{F}_{\Delta t}$ we must have that 
$P(A) = \mathbb{E}_{M}\left[\chi_{A} \zeta \right]$ for all $A \in \mathcal{F}$, proving (\ref{rad_lim}).
Since $(0,\infty)\ni x\mapsto x\log{x}$ is convex and $\forall \Delta t >0$, \begin{equation}
\label{thing_1}
\mathbb{E}_{P}\left[  \log{\zeta_{\Delta t}} \right]  =  \mathbb{E}_{M_{\Delta t}}\left[\zeta_{\Delta t}\log{\zeta_{\Delta t}}   \right] = \mathbb{E}_{M}\left[\frac{dP_{\Delta t}}{dM_{\Delta t}}\log{\frac{dP_{\Delta t}}{dM_{\Delta t}} } \right].
\end{equation}
Conditional Jensen's inequality and (\ref{mart_1}) imply that
\begin{equation}
\label{exp_bound_1}
\mathbb{E}_{M}\left[\frac{dP}{dM}\log{\frac{dP}{dM}}\middle| \mathcal{F}_{\Delta t} \right] \geq \zeta_{\Delta t}\log{\zeta_{\Delta t}}, \; M_{\Delta t}-\textrm{a.s.}\end{equation}
Taking expectations with respect to $M$ of both sides of (\ref{exp_bound_1}) we get that $\forall \Delta t >0$, 
\begin{equation*}
\begin{aligned}\mathbb{E}_{P}\left[\log{\frac{dP}{dM}}\right] & = \mathbb{E}_{M}\left[\frac{dP}{dM}\log{\frac{dP}{dM}}\right]\\ &= \mathbb{E}_{M}\left[ \mathbb{E}_{M}\left[\frac{dP}{dM}\log{\frac{dP}{dM}}\middle| \mathcal{F}_{\Delta t} \right]\right] \\ & \geq  \mathbb{E}_{M}\left[\frac{dP_{\Delta t}}{dM_{\Delta t}}\log{\frac{dP_{\Delta t}}{dM_{\Delta t}} } \right],  \end{aligned}
\end{equation*}
thus \begin{equation*}
\begin{aligned}\mathbb{E}_{P}\left[\log{\frac{dP}{dM}}\right] &\geq \limsup_{\Delta t \downarrow 0}\mathbb{E}_{M}\left[\frac{dP_{\Delta t}}{dM_{\Delta t}}\log{\frac{dP_{\Delta t}}{dM_{\Delta t}} } \right] \\ & =  \limsup_{\Delta t \downarrow 0} \mathbb{E}_{M_{\Delta t}}\left[\frac{dP_{\Delta t}}{dM_{\Delta t}}\log{\frac{dP_{\Delta t}}{dM_{\Delta t}} } \right] \\ 
& = \limsup_{\Delta t \downarrow 0} \mathbb{E}_{P_{\Delta t}}\left[\log{\frac{dP_{\Delta t}}{dM_{\Delta t}}}\right]\end{aligned}
\end{equation*}
The Radon-Nikodym Theorem guarantees that  $\frac{dP}{dM}$  is nonnegative and that $\frac{dP}{dM}\log{\frac{dP}{dM}}$ is $\mathcal{F}-$ measurable, thus 
\begin{equation}
    \begin{aligned}
     \liminf_{\Delta t \downarrow 0} \mathbb{E}_{P_{\Delta t}}\left[\log{\frac{dP_{\Delta t}}{dM_{\Delta t}}}\right] 
 &= \liminf_{\Delta t \downarrow 0}
 \mathbb{E}_{M}\left[\frac{dP_{\Delta t}}{dM_{\Delta t}}\log{\frac{dP_{\Delta t}}{dM_{\Delta t}} } \right]
\\& \geq \mathbb{E}_{M}\left[\frac{dP}{dM}\log{\frac{dP}{dM}}\right] \\&= \mathbb{E}_{P}\left[\log{\frac{dP}{dM}}\right]
    \end{aligned}
\end{equation}
as a consequence of the continuous time version of Fatou's Lemma and (\ref{thing_1}).
Now clearly $$\mathbb{E}_{P_{\Delta t}}\left[\log{\frac{dP_{\Delta t}}{dM_{\Delta t}}}\right] \rightarrow \mathbb{E}_{P}\left[\log{\frac{dP}{dM}}\right] \text{ as } \Delta t \downarrow 0.$$
\end{proof}
Let $\mathcal{F}_{X}^{[t_{0},T)}$ be the sub-$\sigma-$algebra of $\bigotimes_{t\in [t_{0},T)}\mathcal{X}$  defined by
\begin{equation}\label{imp_alg}
\mathcal{F}_{X}^{[t_{0},T)} = \bigcap_{\Delta t>0} \mathcal{F}_{\Delta t}^{[t_{0},T)}
\end{equation}
and observe that $\left(\mathcal{F}_{\Delta t}^{[t_{0},T)}\right)_{\Delta t>0}$ is a decreasing collection of $\sigma-$algebras due to (\ref{filtration}). Henceforth, when we write
$\mathbb{P}_{X\mid X, Y}^{(s,r)}[X_{t_0}^{T}\mid X_{t_{0} - s}^{t_{0}},  Y_{t_{0} - r}^{T}]\left( \cdot \right)$ or $\mathbb{P}_{X}^{(s)}[X_{t_0}^{T}\mid X_{t_{0} - s}^{t_{0}}\}]\left( \cdot \right)$, we are referring to the restriction of these measures to the $\sigma-$algebra $\mathcal{F}_{X}^{[t_{0},T)}$.  Furthermore, recall from (\ref{X_given_X}) and (\ref{X_given_XY}) that for all $A \in \mathcal{F}^{[t_{0},T)}_{{\Delta t}}$ and $ \omega \in \Omega$ we have that
 \begin{equation}\label{eq.9}
\begin{aligned}
 &\mathbb{P}_{X\mid X, Y}^{(s,r)}[X_{t_0}^{T}\mid X_{t_0 - s}^{t_{0}},  Y_{t_{0} - r}^{T}]\left( \omega \right)\Bigg |_{\mathcal{F}^{[t_{0},T)}_{{\Delta t}}}\!\!\left( A \right)=  \prod_{i = 0}^{\tau-1} \mathbb{P}_{X| \overleftarrow{X}, \overleftarrow{Y},i,\Delta t}^{(\omega),(k,l)}\left( \pi_{\left\langle T, i, \Delta t\right\rangle}(A)\right)
\end{aligned}
\end{equation}
and
\begin{equation}\label{eq.10}
\begin{aligned}
&\mathbb{P}_{X}^{(s)}[X_{t_{0}}^{T}\mid X_{t_{0} - s}^{t_{0}}]\left( \omega \right)\Bigg |_{\mathcal{F}^{[t_{0},T)}_{{\Delta t}}}\left( A \right)= \prod_{i = 0}^{\tau-1} \mathbb{P}_{X| \overleftarrow{X},i,\Delta t}^{(\omega),(k)}\big( \pi_{\left\langle T, i, \Delta t\right\rangle}(A)\big)
\end{aligned}
\end{equation}
where $k=\left\lfloor \frac{s}{\Delta t}\right\rfloor$ and $l=\left\lfloor \frac{r}{\Delta t}\right\rfloor$.  From now on, we will omit writing the projections in (\ref{eq.9}) and (\ref{eq.10}) to avoid cumbersome notation.
\begin{Notation}
For each $\omega \in \Omega, \Delta t>0$, we denote by $P_{\Delta t}^{(\omega)}$ and $M_{\Delta t}^{(\omega)}$ the measures 
$\mathbb{P}_{X\mid X, Y}^{(s,r)}[X_{t_0}^{T}\mid X_{t_0 - s}^{t_{0}},  Y_{t_{0} - r}^{T}]\left( \omega \right)\Bigg |_{\mathcal{F}^{[t_{0},T)}_{{\Delta t}}}$ and $\mathbb{P}_{X}^{(s)}[X_{t_0}^{T}\mid X_{t_0 - s}^{t_{0}}]\left( \omega \right)\Bigg |_{\mathcal{F}^{[t_{0},T)}_{{\Delta t}}},$ respectively.  It should be noted that these are measures on the measurable space $\left( \Sigma^{\tau}, \bigotimes^{\tau}\mathcal{X}\right).$
For $\Delta t >0$, let \begin{equation*}
\mathbb{T}_{Y \rightarrow X}^{(k,l),\Delta t}\left( \left\langle T,i,\Delta t \right\rangle\right)=\mathbb{E}_{\mathbb{P}}\left[ KL\left( {P}^{(k,l)}_{\Delta t}\Big| \Big|  
    {M}^{(k)}_{\Delta t}
     \right)\right]
\end{equation*}
for any $ i = 0,1,\ldots,\tau-1$
where $$P^{(k,l)}_{\Delta t} = \mathbb{P}^{(k,l)}_{\left\langle T,i,\Delta t \right\rangle}\left[X_{\left\langle T,i,\Delta t \right\rangle}\Big | \left(X_{\left\langle T,i+k+1,\Delta t \right\rangle}^{\left\langle T,i+1,\Delta t \right\rangle}\right), \left(Y_{\left\langle T,i+l+1,\Delta t \right\rangle}^{\left\langle T,i+1,\Delta t \right\rangle}\right)\right],$$
$${M}^{(k)}_{\Delta t} = \mathbb{P}^{(k)}_{\left\langle T,i,\Delta t \right\rangle}\left[X_{\left\langle T,i,\Delta t \right\rangle}\Big | \left(X_{\left\langle T,i+k+1,\Delta t \right\rangle}^{\left\langle T,i+1,\Delta t \right\rangle}\right) \right],$$
$$\left(X_{\left\langle T,i+k+1,\Delta t \right\rangle}^{\left\langle T, i+1, \Delta t\right\rangle}\right) = \left( 
X_{\left\langle T,i+k+1,\Delta t \right\rangle}, \dots, X_{\left\langle T,i+1,\Delta t \right\rangle}\right),$$ and $$\left(Y_{\left\langle T,i+l+1,\Delta t \right\rangle}^{\langle T, i+1, \Delta t\rangle}\right) = \left( 
Y_{\left\langle T,i+l+1,\Delta t \right\rangle}, \dots, Y_{\left\langle T,i+1,\Delta t \right\rangle}\right).$$
\end{Notation}

As a means of succinctly capturing all of the conditions which need hold to use Definitions \ref{disc_TE} and \ref{EPT_def}, we define a type of consistency between two processes dependent on the window lengths $r$ and $s$ and the interval $[t_{0},T)$.  This notion of consistency captures the conditions under which our main result, Theorem \ref{main}, is of utility.

\begin{definition}
Suppose $\mathbb{T} \subset  \mathbb{R}_{\geq 0}$ is a closed and bounded interval, $[t_{0}, T)\subset \mathbb{T}$, and $s,r >0$ are such that $\left(t_{0}-\max(s,r),T\right) \subset \mathbb{T}$. Suppose further that $X:=\{X_{t}\}_{t\in \mathbb{T}}$ and $Y:=\{Y_{t}\}_{t\in \mathbb{T}}$ are stochastic processes adapted to the filtered probability space $(\Omega, \mathcal{F}, \{\mathcal{F}_{t}\}_{t \in \mathbb{T}},\mathbb{P})$ such that for each $t \in \mathbb{T}, X_{t}$  and $Y_{t}$ are random variables taking values in the measurable space $(\Sigma, \mathcal{X})$, where $\Sigma$ is assumed to be a Polish space and $\mathcal{X}$ is a $\sigma-$algebra of subsets of $\Sigma$.
$Y$ is $(s,r)$-{\em consistent upon }$X$ {\em on} $[t_0,T)$ iff 
\begin{itemize}
\item[1.] $\forall \omega \in \Omega$ there exist measures $\mathbb{P}_{X}^{(s)}[X_{t_0}^{T}\mid X_{t_0 - s}^{t_{0}}\}]\left( \omega \right)$ and $\mathbb{P}_{X\mid X, Y}^{(s,r)}[X_{t_0}^{T}\mid X_{t_0 - s}^{t_{0}},  Y_{t_{0} - r}^{T}]\left( \omega \right)$ on the space $\left(\Omega_{X}^{[t_{0},T)},\mathcal{F}_{X}^{[t_{0},T)}\right)$ for which (\ref{X_given_X}) and (\ref{X_given_XY}) hold. 
\item[2.]$\exists \delta_{1}>0$ such that for all $ \Delta t\in(0,\delta_{1})$ and $ i = 0,1,\ldots,\tau-1$
\begin{itemize}
 \item[(a)] 
$\mathbb{P}_{X\mid \overleftarrow{X},\overleftarrow{Y},i,\Delta t}^{(\omega),(\lfloor \frac{s}{\Delta t}\rfloor,\lfloor \frac{r}{\Delta t}\rfloor)}\ll  \mathbb{P}_{X\mid \overleftarrow{X},i,\Delta t}^{(\omega),(\lfloor \frac{s}{\Delta t}\rfloor)}, \forall \omega\in\Omega.$

\item[(b)]
$\frac{d\mathbb{P}_{X| \overleftarrow{X},\overleftarrow{Y},i,\Delta t}^{(\omega),(k,l)}}{d\mathbb{P}_{X| \overleftarrow{X},i,\Delta t}^{(\omega),(k)}} \in L_{1}\left(\Sigma, \mathcal{X}, \mathbb{P}_{X| \overleftarrow{X},\overleftarrow{Y}, i,\Delta t}^{(\omega),(k,l)}\right), \forall \omega\in\Omega.$

\item[(c)]
$KL\left(P_{\Delta t}^{(\cdot)} \Bigg|\Bigg| M_{\Delta t}^{(\cdot)}\right)$ is $\mathbb{P}-$integrable.
\end{itemize}
\item[3.]  $ \mathbb{P}_{X\mid X, Y}^{(s,r)}[X_{t_0}^{T}\mid X_{t_o - s}^{t_{0}},  Y_{t_{0} - r}^{T}]\left( \omega \right)\ll \mathbb{P}_{X}^{(s)}[X_{t_0}^{T}\mid X_{t_o - s}^{t_{0}}]\left( \omega \right)$ for each $\omega \in \Omega$.
\end{itemize}
We call 1.- 3.~``consistency conditions''.
\end{definition}
We now present our main result.
\begin{theorem}\label{main}
Suppose $\mathbb{T} \subset  \mathbb{R}_{\geq 0}$ is a closed and bounded interval with $[t_{0},T)\subset \mathbb{T}$, $\Sigma$ is a Polish space and $s,r >0$ satisfy $\left(t_{0}-\max(s,r),T\right) \subset \mathbb{T}$. Suppose further that $X:=\{X_{t}\}_{t\in \mathbb{T}}$ and $Y:=\{Y_{t}\}_{t\in \mathbb{T}}$ are stochastic processes adapted to the filtered probability space $(\Omega, \mathcal{F}, \{\mathcal{F}_{t}\}_{t \in \mathbb{T}},\mathbb{P})$ such that for each $t \in \mathbb{T}, X_{t}$  and $Y_{t}$ are random variables taking values in the measurable state space $(\Sigma, \mathcal{X})$ and
that $Y$ is $(s,r)$-consistent upon $X$ on $[t_{0}, T)$.
If $\exists M, \delta_{2}>0\text{ such that }\forall \Delta t \in (0, \delta_{2}),$
\begin{equation}\label{bounder}
KL\left(P_{\Delta t}^{(\cdot)} \Bigg|\Bigg| M_{\Delta t}^{(\cdot)}\right)\leq M ,  \mathbb{P}-\textrm{a.s.},\end{equation} then 
 $$ {\mathcal{E}\mathcal{P}\mathcal{T}}_{Y \rightarrow X}^{(s,r)}\mid_{t_0}^{T} <\infty$$
iff 
\begin{equation}\label{Big_Fish} \lim_{\Delta t \downarrow 0}\left[ \sum_{i = 0}^{\tau-1}       \mathbb{T}_{Y \rightarrow X}^{(k,l),\Delta t}\left(  \left\langle T,i,\Delta t \right\rangle\right)\right] = {\mathcal{E}\mathcal{P}\mathcal{T}}_{Y \rightarrow X}^{(s,r)}\mid_{t_0}^{T}
\end{equation}
where $k=\lfloor \frac{s}{\Delta t}\rfloor$ and $l=\lfloor \frac{r}{\Delta t}\rfloor$.
\end{theorem}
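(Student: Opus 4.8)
The plan is to show that, for every $\Delta t$, the discrete sum on the left of (\ref{Big_Fish}) is exactly the $\mathbb{P}$-expectation of a single KL divergence between the \emph{same} pair of conditional path measures restricted to $\mathcal{F}_{\Delta t}^{[t_0,T)}$, and then to pass $\Delta t\downarrow 0$ using Lemmas \ref{fixedR-N} and \ref{KL_conv} together with the uniform bound (\ref{bounder}). First I would establish this bridge identity. Fix $\omega$ and $\Delta t\in(0,\delta_1)$; by (\ref{eq.9})--(\ref{eq.10}) the restricted measures $P_{\Delta t}^{(\omega)}$ and $M_{\Delta t}^{(\omega)}$ are the product measures $\prod_{i=0}^{\tau-1}\mathbb{P}_{X\mid\overleftarrow{X},\overleftarrow{Y},i,\Delta t}^{(\omega),(k,l)}$ and $\prod_{i=0}^{\tau-1}\mathbb{P}_{X\mid\overleftarrow{X},i,\Delta t}^{(\omega),(k)}$ on $(\Sigma^{\tau},\bigotimes^{\tau}\mathcal{X})$. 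Consistency condition 2(a) supplies the coordinatewise absolute continuity, so Lemma \ref{fixedR-N} yields $\frac{dP_{\Delta t}^{(\omega)}}{dM_{\Delta t}^{(\omega)}}=\prod_{i=0}^{\tau-1}\frac{d\mathbb{P}_{X\mid\overleftarrow{X},\overleftarrow{Y},i,\Delta t}^{(\omega),(k,l)}}{d\mathbb{P}_{X\mid\overleftarrow{X},i,\Delta t}^{(\omega),(k)}}$. Taking logarithms turns this into a sum, and integrating against $P_{\Delta t}^{(\omega)}$, where each factor is a probability measure so every off-coordinate integral is $1$, collapses the KL divergence of the product into a sum of coordinatewise KL divergences (conditions 2(b)--2(c) make the interchange and the outer expectation legitimate). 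Taking $\mathbb{E}_{\mathbb{P}}$ and comparing with the definition of $\mathbb{T}_{Y\to X}^{(k,l),\Delta t}$ gives
\begin{equation*}
\sum_{i=0}^{\tau-1}\mathbb{T}_{Y\to X}^{(k,l),\Delta t}\big(\langle T,i,\Delta t\rangle\big)=\mathbb{E}_{\mathbb{P}}\!\left[KL\!\left(P_{\Delta t}^{(\cdot)}\,\big\|\,M_{\Delta t}^{(\cdot)}\right)\right]
\end{equation*}
for every $\Delta t\in(0,\delta_1)$.

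Next I would obtain pointwise convergence of the integrand. For each fixed $\omega$, Lemma \ref{KL_conv} applies on $\big(\Omega_X^{[t_0,T)},\mathcal{F}_X^{[t_0,T)}\big)$ with the family $\big(\mathcal{F}_{\Delta t}^{[t_0,T)}\big)_{\Delta t>0}$ (decreasing in $\Delta t$ by (\ref{filtration}), with intersection $\mathcal{F}_X^{[t_0,T)}$ by (\ref{imp_alg})) and with $P:=\mathbb{P}_{X\mid X,Y}^{(s,r)}(\omega)$, $M:=\mathbb{P}_X^{(s)}(\omega)$; consistency condition 3 supplies $P\ll M$. Hence, whenever $KL(\omega)=\mathbb{E}_P[\log\tfrac{dP}{dM}]<\infty$, Lemma \ref{KL_conv} gives $KL\!\left(P_{\Delta t}^{(\omega)}\,\|\,M_{\Delta t}^{(\omega)}\right)\to KL(\omega)$ as $\Delta t\downarrow 0$. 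For the forward implication, assume $\mathcal{E}\mathcal{P}\mathcal{T}_{Y\to X}^{(s,r)}\mid_{t_0}^{T}=\mathbb{E}_{\mathbb{P}}[KL(\cdot)]<\infty$; since KL divergences are nonnegative, $KL(\omega)<\infty$ for $\mathbb{P}$-a.e.\ $\omega$, so the integrands converge on a full-measure set. By (\ref{bounder}) they are dominated by the constant $M\in L_1(\Omega,\mathcal{F},\mathbb{P})$ for all $\Delta t\in(0,\delta_2)$, so the Dominated Convergence Theorem moves the limit inside $\mathbb{E}_{\mathbb{P}}$, and with the bridge identity this is precisely (\ref{Big_Fish}). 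The reverse implication is immediate: if (\ref{Big_Fish}) holds, then (\ref{bounder}) forces $\mathbb{E}_{\mathbb{P}}[KL(P_{\Delta t}^{(\cdot)}\|M_{\Delta t}^{(\cdot)})]\le M$ for all $\Delta t\in(0,\delta_2)$, so the limit, equal to $\mathcal{E}\mathcal{P}\mathcal{T}_{Y\to X}^{(s,r)}\mid_{t_0}^{T}$ by hypothesis, is at most $M<\infty$.

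The main obstacle is the interchange of $\lim_{\Delta t\downarrow 0}$ with the outer expectation $\mathbb{E}_{\mathbb{P}}$ in the forward direction: Lemma \ref{KL_conv} only delivers convergence \emph{$\omega$ by $\omega$}, and without uniform control the integral of the limit need not equal the limit of the integrals. Hypothesis (\ref{bounder}) is exactly what removes this difficulty by furnishing a dominating function for the Dominated Convergence Theorem, and it is also what powers the otherwise trivial reverse implication. A secondary point needing care is the product decomposition of the KL divergence in the bridge identity, where Lemma \ref{fixedR-N} must be combined with the probability-measure normalization of each factor through a Tonelli-type argument, with consistency condition 2 ensuring every integral in sight is finite.
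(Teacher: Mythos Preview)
Your proposal is correct and shares the paper's overall skeleton: the same bridge identity via Lemma~\ref{fixedR-N} (the paper also invokes Fubini to collapse the product integral, exactly as you sketch), the same $\omega$-by-$\omega$ convergence via Lemma~\ref{KL_conv}, and the same use of (\ref{bounder}) to interchange the limit with $\mathbb{E}_{\mathbb{P}}$. The one substantive difference is how that interchange is carried out. You invoke the Dominated Convergence Theorem directly, which is entirely justified here since the constant $M$ dominates on a probability space; the paper instead works by hand, passing through convergence in $\mathbb{P}$-probability, introducing truncations $h_{\Delta t}^{\epsilon}$, and controlling the tail piece $\alpha_{\Delta t}^{\epsilon}$ via (\ref{bounder}). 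Your route is shorter and cleaner for this theorem. The paper's more elaborate machinery pays off in modularity: the proofs of Corollaries~\ref{l1_bound} and~\ref{PPPusecase} consist of simply redoing the bound on $\mathbb{E}_{\mathbb{P}}[\alpha_{\Delta t}^{\epsilon}]$ under weaker hypotheses (an $L_1$ dominator, respectively a bound holding only on sets of probability tending to~$1$), whereas your DCT argument covers Corollary~\ref{l1_bound} immediately but would need reworking for Corollary~\ref{PPPusecase}. Your reverse implication, bounding the limit directly by $M$ rather than arguing by contradiction, is also a minor streamlining of the paper's version.
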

\begin{proof}
$(\Rightarrow)$ Suppose ${\mathcal{E}\mathcal{P}\mathcal{T}}_{Y \rightarrow X}^{(s,r)}\mid_{t_0}^{T}<\infty$,
let $\delta = \min{\left\{\delta_{1}, \delta_{2}\right\}}$ and for each $\omega \in \Omega$ let
$$P^{(\omega)} = \mathbb{P}_{X\mid X, Y}^{(s,r)}[X_{t_0}^{T}\mid X_{t_{0} - s}^{t_{0}},  Y_{t_{0} - r}^{T}]\left( \omega \right)$$ and 
$$M^{(\omega)} = \mathbb{P}_{X}^{(s,r)}[X_{t_0}^{T}\mid X_{t_0 - s}^{t_{0}}]\left( \omega \right).$$  
If $\Delta t \in (0, \delta)$, then consistency condition 2(c) implies that $ KL\left( P_{\Delta t}^{(\omega)}|| M_{\Delta t}^{(\omega)} \right)$ is $\mathbb{P}$-integrable.  Since $\Sigma$ is $\sigma-$finite under both $\mathbb{P}_{X\mid \overleftarrow{X},\overleftarrow{Y},i,\Delta t}^{(\omega),(\left\lfloor \frac{s}{\Delta t}\right\rfloor,\left\lfloor \frac{r}{\Delta t}\right\rfloor)}$ and $\mathbb{P}_{X\mid \overleftarrow{X},i,\Delta t}^{(\omega),(\left\lfloor \frac{s}{\Delta t}\right\rfloor)}$ for each $ \omega\in\Omega$ and any $i =0,1,\dots, \tau-1$,  we have that the measurable space $\left( \Sigma^{\tau},  \bigotimes_{i=0}^{\tau-1}\mathcal{X}\right)$ is $\sigma-$finite under both $P_{\Delta t}^{(\omega)}$ and $M_{\Delta t}^{(\omega)}$ for each $\omega \in \Omega$, thus the RN-derivatives in (\ref{Big_Fish}) exist.  Furthermore, we get from Lemma \ref{fixedR-N} that
\begin{equation}\label{exp_is_sum_1}
\begin{aligned}
\mathbb{E}_{\mathbb{P}}\left[ KL\left( P_{\Delta t}^{(\cdot)}|| M_{\Delta t}^{(\cdot)} \right) \right]
& = \mathbb{E}_{\mathbb{P}}\left[ \mathbb{E}_{P_{\Delta t}^{(\omega)}}\left[\log{\frac{d\left( \prod_{i = 0}^{\tau-1} \mathbb{P}_{X| \overleftarrow{X}, \overleftarrow{Y},i,\Delta t}^{(\omega),(k,l)}\right)}{d \left( \prod_{i = 0}^{\tau-1} \mathbb{P}_{X| \overleftarrow{X},i,\Delta t}^{(\omega),(k)}\right)}}\right]\right]
\\ & = \mathbb{E}_{\mathbb{P}}\left[\mathbb{E}_{P_{\Delta t}^{(\omega)}}\left[ \log{\left(\prod_{i = 0}^{\tau-1}\left(\frac{d \mathbb{P}_{X| \overleftarrow{X}, \overleftarrow{Y},i,\Delta t}^{(\omega),(k,l)}}{d  \mathbb{P}_{X| \overleftarrow{X},i,\Delta t}^{(\omega),(k)}}\right)\right)}\right]\right]
\\& = \sum_{i = 0}^{\tau-1} \mathbb{E}_{\mathbb{P}}\left[ \mathbb{E}_{P_{\Delta t}^{(\omega)}} \left[ \log{\frac{d\mathbb{P}_{X|\overleftarrow{X},\overleftarrow{Y},i,\Delta t}^{(\omega),(k,l)}}{d\mathbb{P}_{X\mid \overleftarrow{X},i,\Delta t}^{(\omega),(k)}}}\right]\right]
\end{aligned}
\end{equation}
where $k = \left\lfloor \frac{s}{\Delta t}\right\rfloor$ and $l = \left\lfloor \frac{r}{\Delta t}\right\rfloor.$
Now for each $ \Delta t>0, i = 0,1,\cdots, \tau-1$ and $\omega \in \Omega$ let $$F_{i, \Delta t}^{\omega}\left( x_{0}, x_{1}, \cdots, x_{\tau-1}\right) = \log{\frac{d\mathbb{P}_{X|\overleftarrow{X},\overleftarrow{Y},i,\Delta t}^{(\omega),(k,l)}}{d\mathbb{P}_{X\mid \overleftarrow{X},i,\Delta t}^{(\omega),(k)}}}(x_{i})$$ for each $\tau$-tuple $\left( x_{0}, x_{1}, \cdots, x_{\tau-1}\right) \in \Sigma^{\tau}.$  Clearly, $F_{i, \Delta t}^{\omega}$ is $\Sigma^{\tau}-$measurable and furthermore $ P_{\Delta t}^{(\omega)}-$integrable due to Jensen's inequality since consistency condition 2(b) implies
 \begin{align*} \int_{\Sigma^{\tau}} \left[F_{i,\Delta t}^{\omega}\right]d P_{\Delta t}^{(\omega)} \leq \log\left( \int_{\Sigma^{\tau}} \left[ \frac{d\mathbb{P}_{X|\overleftarrow{X},\overleftarrow{Y},i,\Delta t}^{(\omega),(k,l)}}{d\mathbb{P}_{X\mid \overleftarrow{X},i,\Delta t}^{(\omega),(k)}}\right]d P_{\Delta t}^{(\omega)}\right)<\infty.
\end{align*}
Now we apply Fubini's Theorem and obtain 

\begin{align*}
&\sum_{i = 0}^{\tau-1} \mathbb{E}_{\mathbb{P}}\left[ \mathbb{E}_{P_{\Delta t}^{(\cdot)}} \left[ \log{\frac{d\mathbb{P}_{X|\overleftarrow{X},\overleftarrow{Y},i,\Delta t}^{(\cdot),(k,l)}}{d\mathbb{P}_{X\mid \overleftarrow{X},i,\Delta t}^{(\cdot),(k)}}}\right]\right] 
\\ & = \sum_{i = 0}^{\tau-1} \mathbb{E}_{\mathbb{P}}\left [ \int_{\Sigma^{\tau}} \left[ \log{\frac{d\mathbb{P}_{X|\overleftarrow{X},\overleftarrow{Y},i,\Delta t}^{(\cdot),(k,l)}}{d\mathbb{P}_{X\mid \overleftarrow{X},i,\Delta t}^{(\cdot),(k)}}}\right] \right . \left . d\left( \prod_{j = 0}^{\tau-1}\mathbb{P}_{X|\overleftarrow{X},\overleftarrow{Y},j,\Delta t}^{(\cdot),(k,l)}\right)\right ]
\\ 
& = \sum_{i = 0}^{\tau-1} \mathbb{E}_{\mathbb{P}}\left[ \int_{\Sigma^{\tau}} F_{i, \Delta t}^{(\cdot)}   d\left( \prod_{j = 0}^{\tau-1}\mathbb{P}_{X|\overleftarrow{X},\overleftarrow{Y},j,\Delta t}^{(\cdot),(k,l)}\right)\right]
\\& = \sum_{i = 0}^{\tau-1} \mathbb{E}_{\mathbb{P}}\left[ S_{i,\Delta t}\left( \int_{\Sigma} \log{\frac{d\mathbb{P}_{X|\overleftarrow{X},\overleftarrow{Y},i,\Delta t}^{(\cdot),(k,l)}}{d\mathbb{P}_{X\mid \overleftarrow{X},i,\Delta t}^{(\cdot),(k)}}} \hspace{1mm} d\mathbb{P}_{X|\overleftarrow{X},\overleftarrow{Y},i,\Delta t}^{(\cdot),(k,l)}\right)\right]
\\& =  \sum_{i = 0}^{\tau-1} \mathbb{T}_{Y \rightarrow X}^{(k,l), \Delta t}\left(  \left\langle T,i,\Delta t \right\rangle\right)
\end{align*}
where $S_{i,\Delta t} = \prod_{\substack{j = 0, j\neq i}}^{\tau -1} \int_{\Sigma}1 d \mathbb{P}_{X|\overleftarrow{X},\overleftarrow{Y},j,\Delta t}^{(\cdot),(k,l)}$ for $i = 0,1,\dots, \tau-1.$
Moreover, 
\begin{equation}\label{exp_is_sum}
\mathbb{E}_{\mathbb{P}}\left[ KL\left( P_{\Delta t}^{(\cdot)}|| M_{\Delta t}^{(\cdot)} \right) \right] =  \sum_{i = 0}^{\tau-1} \mathbb{T}_{Y \rightarrow X}^{(k,l), \Delta t}\left(  \left\langle T,i,\Delta t \right\rangle\right).
\end{equation}
Since ${\mathcal{E}\mathcal{P}\mathcal{T}}_{Y \rightarrow X}^{(s,r)}\mid_{t_0}^{T} <\infty$, we have 
$KL\left( P^{\left( \omega \right)} \big|\big| M^{\left( \omega \right)}\right) < \infty$ for all $\omega \in \Omega \backslash B$ for some $\mathbb{P}$-null set $B,$ which from Lemma \ref{KL_conv} implies that 
\begin{equation}\label{lemma_result}\mathbb{E}_{P_{\Delta t}^{(\cdot)}}\left[\log{\frac{dP_{\Delta t}^{(\cdot)}}{dM_{\Delta t}^{(\cdot)}}}\right] \rightarrow \mathbb{E}_{P^{(\cdot)}}\left[\log{\frac{dP^{(\cdot)}}{dM^{(\cdot)}}}\right]  \text{ as } \Delta t \downarrow 0, \mathbb{P}-\textrm{a.s.}\end{equation} 
Let
$$
g(\omega) = \left\{
\begin{array}{l l}
 \lim_{\Delta t \downarrow 0}\left( KL\left( P_{\Delta t}^{(\omega)}|| M_{\Delta t}^{(\omega)} \right)\right) & \quad  \omega \in \Omega \backslash B\\
0 & \quad \omega\in B\\ \end{array} \right. $$
and observe that $
g \in  L_{1}\left( \Omega, \mathcal{F}, \mathbb{P}\right)
$ and
\begin{equation}\label{L_1 thing}
\begin{aligned}\lim_{\Delta t \downarrow 0} KL\left( P_{\Delta t}^{(\cdot)}|| M_{\Delta t}^{(\cdot)} \right) = g , \mathbb{P} -\textrm{a.s.}
\end{aligned}
\end{equation}
Moreover, since $\mathbb{P}\left( \Omega\right) = 1$ we have
\begin{equation}\label{measure_conv}  KL\left( P_{\Delta t}^{(\cdot)}|| M_{\Delta t}^{(\cdot)} \right) \overset{\mathbb{P}}{\to} g \text{ as } \Delta t \downarrow 0.
\end{equation}
Now for each $ \epsilon, \Delta t>0$ and $ \omega \in \Omega$, define $h_{\Delta t}^{\epsilon}(\omega)$ by 
 $$ h_{\Delta t}^{\epsilon}(\omega)= \left\{
\begin{array}{l l}
KL\left( P_{\Delta t}^{(\omega)}|| M_{\Delta t}^{(\omega)} \right) & \quad  \Big|KL\left( P_{\Delta t}^{(\omega)}|| M_{\Delta t}^{(\omega)} \right) - g(\omega) \Big| < \epsilon\\
0 & \quad \mbox{otherwise}\\ \end{array} \right. $$
 and note that $h_{\Delta t}^{\epsilon}$ is nonnegative $\forall \epsilon, \Delta t>0$ due to Gibbs' inequality and converges in probability to $g$ since $\forall \eta >0$
 \begin{equation*}
\begin{aligned}
\mathbb{P}\left( \left\{ \Big| h_{\Delta t} - g \Big| \geq \eta \right\}\right)& \leq \mathbb{P}\left( \left\{\Big|KL\left( P_{\Delta t}^{(\cdot)}|| M_{\Delta t}^{(\cdot)} \right) - g \Big| \geq \eta \right\} \right)
\\& \qquad + \mathbb{P}\left( \left\{\Big|KL\left( P_{\Delta t}^{(\cdot)}|| M_{\Delta t}^{(\cdot)} \right) - g \Big| \geq \epsilon \right\} \right)  \rightarrow 0
\end{aligned}
\end{equation*}
 as $\Delta t \downarrow 0$. Let $\epsilon>0$ be arbitrary and observe that 
 \begin{equation}\label{norm_eq_spl}
 \begin{aligned}
 \lVert h_{\Delta t}^{\epsilon} - g \rVert_{L_{1}} &=
  \mathbb{E}_{\mathbb{P}}\left[ \Big|KL\left( P_{\Delta t}^{(\cdot)}|| M_{\Delta t}^{(\cdot)} \right) - g \Big| \chi_{\left\{\Big|KL\left( P_{\Delta t}^{(\cdot)}|| M_{\Delta t}^{(\cdot)} \right) - g \Big|<\epsilon\right\}}\right] \\&\qquad + \mathbb{E}_{\mathbb{P}}\left[  g \chi_{\left\{\Big|KL\left( P_{\Delta t}^{(\cdot)}|| M_{\Delta t}^{(\cdot)} \right) - g \Big|\geq \epsilon\right\}}\right]
 \\& < \epsilon  \mathbb{P}\left( \left\{\Big|KL\left( P_{\Delta t}^{(\cdot)}|| M_{\Delta t}^{(\cdot)} \right) - g \Big|<\epsilon\right\}\right)\\
 & \qquad + \mathbb{E}_{\mathbb{P}}\left[  g \chi_{\left\{\Big|KL\left( P_{\Delta t}^{(\cdot)}|| M_{\Delta t}^{(\cdot)} \right) - g \Big|\geq \epsilon\right\}}\right].
 \end{aligned}
 \end{equation}
 Since $g\in L_{1}\left( \Omega, \mathcal{F}, \mathbb{P}\right)$ and $KL\left( P_{\Delta t}^{(\cdot)}|| M_{\Delta t}^{(\cdot)} \right) \overset{\mathbb{P}}{\to} g\hspace{.1in} \text{as } \Delta t \downarrow 0,$ we have
 $$
 \lim_{\Delta t \downarrow 0} \mathbb{E}_{\mathbb{P}}\left[  g \chi_{\left\{\Big|KL\left( P_{\Delta t}^{(\cdot)}|| M_{\Delta t}^{(\cdot)} \right) - g \Big|\geq \epsilon\right\}}\right] = 0.
 $$
Now since $\mathbb{P}\left( \left\{\Big|KL\left( P_{\Delta t}^{(\cdot)}|| M_{\Delta t}^{(\cdot)} \right) - g \Big|<\epsilon\right\}\right) \rightarrow 1$ as $\Delta t \downarrow 0,$ we obtain  
\begin{equation*}\label{L_1_bound}
 \lim_{\Delta t \downarrow 0}\lVert h_{\Delta t}^{\epsilon} - g \rVert_{L_{1}} \leq \epsilon
\end{equation*}
from (\ref{norm_eq_spl}) and thus $ \lim_{\epsilon \downarrow 0}\lim_{\Delta t \downarrow 0}\lVert h_{\Delta t}^{\epsilon} - g \rVert_{L_{1}} = 0$
since $\epsilon > 0$ was arbitrary.
In particular, 
\begin{equation}\label{lim_h}
 \lim_{\epsilon \downarrow 0}\lim_{\Delta t \downarrow 0}\mathbb{E}_{\mathbb{P}}\left[ h_{\Delta t}^{\epsilon} \right]= \mathbb{E}_{\mathbb{P}}\left[g\right] = \mathbb{E}_{\mathbb{P}}\left[  \lim_{\Delta t \downarrow 0} KL\left( P_{\Delta t}^{(\cdot)}|| M_{\Delta t}^{(\cdot)} \right) \right].
\end{equation}
We now show that 
\begin{equation}\label{punch_line}
 \lim_{\epsilon \downarrow 0}\lim_{\Delta t \downarrow 0}\mathbb{E}_{\mathbb{P}}\left[ h_{\Delta t}^{\epsilon} \right] = \lim_{\Delta t \downarrow 0 }\mathbb{E}_{\mathbb{P}}\left[ KL\left( P_{\Delta t}^{(\cdot)}|| M_{\Delta t}^{(\cdot)} \right) \right].
\end{equation}
Note that 
\begin{equation*}
 \lim_{\epsilon \downarrow 0}\lim_{\Delta t \downarrow 0}\mathbb{E}_{\mathbb{P}}\left[ \alpha _{\Delta t}^{\epsilon}\right] = 0 \implies
 \lim_{\epsilon \downarrow 0}\lim_{\Delta t \downarrow 0}\mathbb{E}_{\mathbb{P}}\left[ h_{\Delta t}^{\epsilon} -  KL\left( P_{\Delta t}^{(\cdot)}|| M_{\Delta t}^{(\cdot)} \right)\right] = 0
\end{equation*}
where $$\alpha _{\Delta t}^{\epsilon}(\omega) = KL\left( P_{\Delta t}^{(\omega)}|| M_{\Delta t}^{(\omega)} \right) \chi_{ \left\{\Big|KL\left( P_{\Delta t}^{(\cdot)}|| M_{\Delta t}^{(\cdot)} \right) - g \Big| \geq \epsilon\right\} }(\omega)$$
for $\epsilon, \Delta t>0,$ and $\omega\in\Omega.$  Fix $\epsilon>0$ and note that (\ref{bounder}) implies \begin{equation}\label{bound}
0 \leq \mathbb{E}_{\mathbb{P}}\left[ \alpha _{\Delta t}^{\epsilon}\right] \leq M \mathbb{P}\left( \left\{\Big|KL\left( P_{\Delta t}^{(\cdot)}|| M_{\Delta t}^{(\cdot)} \right) - g \Big|\geq \epsilon\right\}\right]
\end{equation}
$\forall \Delta t \in(0,\delta)$.  Due to (\ref{measure_conv}), the RHS of 
(\ref{bound}) converges to $0$ as $\Delta t\downarrow 0$, thus 
\begin{equation}\label{point}
 \lim_{\epsilon \downarrow 0}\lim_{\Delta t \downarrow 0}\mathbb{E}_{\mathbb{P}}\left[ \alpha _{\Delta t}^{\epsilon}\right] = 0
\end{equation}
so 
\begin{equation}\label{lim_diff}
 \lim_{\epsilon \downarrow 0}\lim_{\Delta t \downarrow 0}\mathbb{E}_{\mathbb{P}}\left[ h_{\Delta t}^{\epsilon} -  KL\left( P_{\Delta t}^{(\cdot)}|| M_{\Delta t}^{(\cdot)} \right)\right] = 0.
\end{equation}
Now from (\ref{lim_h}) and (\ref{lim_diff}) we have that $ \lim_{\Delta t \downarrow 0 }\mathbb{E}_{\mathbb{P}}\left[ KL\left( P_{\Delta t}^{(\cdot)}|| M_{\Delta t}^{(\cdot)} \right) \right]\text{ exists}$ since $$\mathbb{E}_{\mathbb{P}}\left[ KL\left( P_{\Delta t}^{(\cdot)}|| M_{\Delta t}^{(\cdot)} \right) \right] = \mathbb{E}_{\mathbb{P}}\left[ h_{\Delta t}^{\epsilon} \right] -\left[ \mathbb{E}_{\mathbb{P}}\left[ h_{\Delta t}^{\epsilon} -  KL\left( P_{\Delta t}^{(\cdot)}|| M_{\Delta t}^{(\cdot)} \right)\right]\right].$$
Hence \begin{equation*}
\begin{aligned}
0 &= \lim_{\epsilon \downarrow 0}\lim_{\Delta t \downarrow 0}\mathbb{E}_{\mathbb{P}}\left[ h_{\Delta t}^{\epsilon} -  KL\left( P_{\Delta t}^{(\cdot)}|| M_{\Delta t}^{(\cdot)} \right)\right]
\\& = \lim_{\epsilon \downarrow 0}\lim_{\Delta t \downarrow 0}\left(\mathbb{E}_{\mathbb{P}}\left[ h_{\Delta t}^{\epsilon} \right]-  \mathbb{E}_{\mathbb{P}}\left[KL\left( P_{\Delta t}^{(\cdot)}|| M_{\Delta t}^{(\cdot)} \right)\right]\right)
\\& = \lim_{\epsilon \downarrow 0}\lim_{\Delta t \downarrow 0}\left(\mathbb{E}_{\mathbb{P}}\left[ h_{\Delta t}^{\epsilon} \right]\right) -  \lim_{\epsilon \downarrow 0}\lim_{\Delta t \downarrow 0}\left(\mathbb{E}_{\mathbb{P}}\left[KL\left( P_{\Delta t}^{(\cdot)}|| M_{\Delta t}^{(\cdot)} \right)\right]\right)
\\ & = \lim_{\epsilon \downarrow 0}\lim_{\Delta t \downarrow 0}\left(\mathbb{E}_{\mathbb{P}}\left[ h_{\Delta t}^{\epsilon} \right]\right) - \lim_{\Delta t \downarrow 0}\left(\mathbb{E}_{\mathbb{P}}\left[KL\left( P_{\Delta t}^{(\cdot)}|| M_{\Delta t}^{(\cdot)} \right)\right]\right)
\end{aligned}
\end{equation*}
proving (\ref{punch_line}).  Now we have \begin{equation}\label{end}
\lim_{\Delta t \downarrow 0}\mathbb{E}_{\mathbb{P}}\left[ KL\left( P_{\Delta t}^{(\cdot)}|| M_{\Delta t}^{(\cdot)} \right) \right] = \mathbb{E}_{\mathbb{P}}\left[
 \lim_{\Delta t \downarrow 0} KL\left( P_{\Delta t}^{(\cdot)}|| M_{\Delta t}^{(\cdot)} \right)
\right]
\end{equation}
from which the result follows, since
\begin{align*}
{\mathcal{E}\mathcal{P}\mathcal{T}}_{Y \rightarrow X}^{(s,r)}\mid_{t_0}^{T}& = \mathbb{E}_{\mathbb{P}}\left[KL\left( P^{(\cdot)}|| M^{(\cdot)} \right)\right]
\\& =  \mathbb{E}_{\mathbb{P}}\left[ \lim_{\Delta t \downarrow 0} KL\left( P_{\Delta t}^{(\cdot)}|| M_{\Delta t}^{(\cdot)} \right)\right]
\\ & = \lim_{\Delta t \downarrow 0}\mathbb{E}_{\mathbb{P}}\left[ KL\left( P_{\Delta t}^{(\cdot)}|| M_{\Delta t}^{(\cdot)} \right) \right] 
\\& =  \lim_{\Delta t \downarrow 0}\left[ \sum_{i = 0}^{\tau-1}       \mathbb{T}_{Y \rightarrow X}^{(k,l),\Delta t}\left(  \left\langle T,i,\Delta t \right\rangle\right)\right].
\end{align*}
$(\Leftarrow)$ Suppose towards a contradiction 
$$
\lim_{\Delta t \downarrow 0}\left[ \sum_{i = 0}^{\tau-1}       \mathbb{T}_{Y \rightarrow X}^{(k,l),\Delta t}\left(  \left\langle T,i,\Delta t \right\rangle\right)\right] = {\mathcal{E}\mathcal{P}\mathcal{T}}_{Y \rightarrow X}^{(s,r)}\mid_{t_0}^{T} = \infty.
$$
Then 
\begin{equation*}\label{inf_lim}
 \lim_{\Delta t \downarrow 0}\mathbb{E}_{\mathbb{P}}\left[ KL\left( P_{\Delta t}^{(\cdot)}|| M_{\Delta t}^{(\cdot)} \right) \right] = \infty,
\end{equation*}
thus $\exists \delta_{3}>0$ such that $\Delta t \in (0,\delta_{3}) \implies \mathbb{E}_{\mathbb{P}}\left[ KL\left( P_{\Delta t}^{(\cdot)}|| M_{\Delta t}^{(\cdot)} \right) \right] > M.$
From (\ref{bounder}),
\begin{equation*}\label{bound_3}KL\left( P_{\Delta t}^{(\cdot)}|| M_{\Delta t}^{(\cdot)} \right)\leq M  , \mathbb{P}-\textrm{a.s.}
\end{equation*}
$\forall \Delta t \in (0, \delta_{2})$, hence $$M < \mathbb{E}_{\mathbb{P}}\left[ KL\left( P_{\Delta t}^{(\cdot)}|| M_{\Delta t}^{(\cdot)} \right) \right]\leq \mathbb{E}_{\mathbb{P}}[M] = M,$$
$\forall \Delta t \in (0, \min{\{\delta_{3}, \delta_{2}\}})$.  This is a contradiction and the proof is complete.
\end{proof}
Due to the following corollary, one can conclude the ``only if'' part of Theorem \ref{main} under a weaker version of (\ref{bounder}).
\begin{corollary}\label{l1_bound}
Let $\mathbb{T} \subset  \mathbb{R}_{\geq 0}$ be an interval and $[t_{0},T)\subset \mathbb{T}$ and $s,r >0$ be such that $\left(t_{0}-\max(s,r),T\right) \subset \mathbb{T}$. Suppose $X:=\{X_{t}\}_{t\in \mathbb{T}}$ and $Y:=\{Y_{t}\}_{t\in \mathbb{T}}$ are stochastic processes adapted to the filtered probability space $(\Omega, \mathcal{F}, \{\mathcal{F}_{t}\}_{t \in \mathbb{T}},\mathbb{P})$ such that for each $t \in \mathbb{T}, X_{t}$  and $Y_{t}$ are random variables taking values in the measurable state space $(\Sigma, \mathcal{X})$ and
Y is $(s,r)-$SPL consistent upon $X$ on $[t_{0}, T)$.
If there exist $\eta\in L_{1}\left( \Omega, \mathcal{F},\mathbb{P}\right)$ and $\delta_{2}>0$ such that $KL\left(P_{\Delta t}^{(\cdot)} || M_{\Delta t}^{(\cdot)}\right)\leq \eta(\cdot) , \mathbb{P}-\textrm{a.s.}$ $\forall\Delta t \in (0, \delta_{2}) $ and
 ${\mathcal{E}\mathcal{P}\mathcal{T}}_{Y \rightarrow X}^{(s,r)}\mid_{t_0}^{T}<\infty,$
then
$$
\lim_{\Delta t \downarrow 0}\left[ \sum_{i = 0}^{\tau-1}       \mathbb{T}_{Y \rightarrow X}^{(k,l),\Delta t}\left(  \left\langle T,i,\Delta t \right\rangle\right)\right] = {\mathcal{E}\mathcal{P}\mathcal{T}}_{Y \rightarrow X}^{(s,r)}\mid_{t_0}^{T}
$$
where $k=\lfloor \frac{s}{\Delta t}\rfloor$ and $l=\lfloor \frac{r}{\Delta t}\rfloor$.
\end{corollary}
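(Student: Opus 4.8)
The plan is to follow the $(\Rightarrow)$ direction of the proof of Theorem~\ref{main} essentially verbatim, isolating the single place where the uniform bound (\ref{bounder}) was used and replacing it with the weaker $L_1$-domination hypothesis. First I would reproduce the reduction supplied by Lemma~\ref{fixedR-N}, which yields, for each $\Delta t$ in the relevant range, the identity (\ref{exp_is_sum}),
\begin{equation*}
\mathbb{E}_{\mathbb{P}}\left[ KL\left( P_{\Delta t}^{(\cdot)}|| M_{\Delta t}^{(\cdot)} \right) \right] = \sum_{i = 0}^{\tau-1} \mathbb{T}_{Y \rightarrow X}^{(k,l), \Delta t}\left(  \left\langle T,i,\Delta t \right\rangle\right),
\end{equation*}
so that the claimed conclusion is equivalent to the interchange $\lim_{\Delta t \downarrow 0}\mathbb{E}_{\mathbb{P}}[KL(P_{\Delta t}^{(\cdot)}||M_{\Delta t}^{(\cdot)})] = \mathbb{E}_{\mathbb{P}}[\lim_{\Delta t \downarrow 0}KL(P_{\Delta t}^{(\cdot)}||M_{\Delta t}^{(\cdot)})]$. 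The consistency conditions together with ${\mathcal{E}\mathcal{P}\mathcal{T}}_{Y \rightarrow X}^{(s,r)}\mid_{t_0}^{T}<\infty$ let me invoke Lemma~\ref{KL_conv} exactly as before, producing a $\mathbb{P}$-a.s.\ limit $g \in L_{1}(\Omega,\mathcal{F},\mathbb{P})$ of $KL(P_{\Delta t}^{(\cdot)}||M_{\Delta t}^{(\cdot)})$, hence convergence in probability (\ref{measure_conv}), together with $\mathbb{E}_{\mathbb{P}}[g] = {\mathcal{E}\mathcal{P}\mathcal{T}}_{Y \rightarrow X}^{(s,r)}\mid_{t_0}^{T}$.

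The only modification occurs at the tail estimate (\ref{bound}). With the truncations $h_{\Delta t}^{\epsilon}$ and the remainders $\alpha_{\Delta t}^{\epsilon}$ defined precisely as in the proof of Theorem~\ref{main}, I would bound, for each fixed $\epsilon>0$,
\begin{equation*}
0 \leq \alpha_{\Delta t}^{\epsilon}(\omega) \leq \eta(\omega)\,\chi_{\{|KL(P_{\Delta t}^{(\cdot)}||M_{\Delta t}^{(\cdot)}) - g|\geq \epsilon\}}(\omega), \quad \mathbb{P}-\textrm{a.s.},
\end{equation*}
invoking the hypothesis $KL(P_{\Delta t}^{(\cdot)}||M_{\Delta t}^{(\cdot)}) \leq \eta$ in place of the constant $M$. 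Since $\eta\in L_{1}$, the set function $A \mapsto \mathbb{E}_{\mathbb{P}}[\eta\,\chi_{A}]$ is absolutely continuous with respect to $\mathbb{P}$; and because $\mathbb{P}(\{|KL(P_{\Delta t}^{(\cdot)}||M_{\Delta t}^{(\cdot)}) - g|\geq\epsilon\}) \to 0$ as $\Delta t \downarrow 0$ by (\ref{measure_conv}), I conclude $\mathbb{E}_{\mathbb{P}}[\alpha_{\Delta t}^{\epsilon}] \to 0$ and therefore $\lim_{\epsilon\downarrow 0}\lim_{\Delta t \downarrow 0}\mathbb{E}_{\mathbb{P}}[\alpha_{\Delta t}^{\epsilon}]=0$, recovering (\ref{point}). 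This single substitution replaces the constant-bound argument while leaving its role in the proof intact.

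From here everything proceeds identically to Theorem~\ref{main}: the relationship among $h_{\Delta t}^{\epsilon}$, $\alpha_{\Delta t}^{\epsilon}$, and $KL(P_{\Delta t}^{(\cdot)}||M_{\Delta t}^{(\cdot)})$, combined with (\ref{lim_h}), gives (\ref{punch_line}) and then the interchange (\ref{end}), whence $\lim_{\Delta t \downarrow 0}\sum_{i=0}^{\tau-1}\mathbb{T}_{Y\rightarrow X}^{(k,l),\Delta t}(\langle T,i,\Delta t\rangle) = \mathbb{E}_{\mathbb{P}}[g] = {\mathcal{E}\mathcal{P}\mathcal{T}}_{Y \rightarrow X}^{(s,r)}\mid_{t_0}^{T}$, as required.

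I expect the only substantive point — and it is a mild one — to be the justification that $\mathbb{E}_{\mathbb{P}}[\eta\,\chi_{A_{\Delta t}}]\to 0$ whenever $\mathbb{P}(A_{\Delta t})\to 0$; this is exactly the absolute continuity of the integral of the fixed integrable envelope $\eta$, that is, a Vitali-type (uniform integrability) refinement of dominated convergence along the continuum parameter $\Delta t \downarrow 0$. Equivalently, one may reduce to arbitrary sequences $\Delta t_{n}\downarrow 0$ and apply the ordinary dominated convergence theorem with dominating function $\eta$, observing that the common limit $\mathbb{E}_{\mathbb{P}}[g]$ is independent of the chosen sequence and hence is the continuum limit. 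In either formulation the $L_{1}$ envelope $\eta$ furnishes precisely the uniform integrability that the constant $M$ supplied trivially in Theorem~\ref{main}, so no genuinely new obstacle arises.
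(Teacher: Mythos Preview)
Your proposal is correct and follows essentially the same approach as the paper: the paper's proof simply states that it suffices to re-establish (\ref{point}) from the proof of Theorem~\ref{main}, and does so by bounding $\alpha_{\Delta t}^{\epsilon}$ with $\eta\,\chi_{\{\cdots\}}$ and invoking (\ref{measure_conv}). If anything, your justification of $\mathbb{E}_{\mathbb{P}}[\eta\,\chi_{A_{\Delta t}}]\to 0$ via absolute continuity of the integral is more explicit than the paper's one-line appeal to (\ref{measure_conv}).
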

\begin{proof}
We need only show that (\ref{point}) in the proof of the forward direction of Theorem \ref{main} is still true.  Since $\eta\in L_{1}\left( \Omega, \mathcal{F},\mathbb{P}\right)$, for $\epsilon >0$ we have that  \begin{equation*}
    \begin{aligned}
    \mathbb{E}_{\mathbb{P}}\left[\alpha _{\Delta t}^{\epsilon}\right] &= \mathbb{E}_{\mathbb{P}}\left[KL\left( P_{\Delta t}^{(\cdot)}|| M_{\Delta t}^{(\cdot)} \right) \chi_{ \left\{\Big|KL\left( P_{\Delta t}^{(\cdot)}|| M_{\Delta t}^{(\cdot)} \right) - g \Big| \geq \epsilon\right\} }\right] \\& \leq \mathbb{E}_{\mathbb{P}}\left[ \eta  \chi_{ \left\{\Big|KL\left( P_{\Delta t}^{(\cdot)}|| M_{\Delta t}^{(\cdot)} \right) - g \Big| \geq \epsilon\right\} }\right] \rightarrow 0
    \end{aligned}
\end{equation*}
as $\Delta t \downarrow 0$ due to (\ref{measure_conv}).
\end{proof}
The following corollary of Theorem \ref{main} is a key result because it will be used in an application to be explored later in Section \ref{Application: Lagged Poisson point process}.  The conditions in Theorem \ref{main} may be too strong to apply to some common situations.  The following weakens these conditions at the cost of the equivalence between the hypotheses and conclusion in Theorem \ref{main}.
\begin{corollary}\label{PPPusecase}
Let $\mathbb{T} \subset  \mathbb{R}_{\geq 0}$ be a closed and bounded interval, $[t_{0},T)\subset \mathbb{T}$, and $s,r >0$ be such that $\left(t_{0}-\max(s,r),T\right) \subset \mathbb{T}$. Suppose $X:=\{X_{t}\}_{t\in \mathbb{T}}$ and $Y:=\{Y_{t}\}_{t\in \mathbb{T}}$ are stochastic processes adapted to the filtered probability space $(\Omega, \mathcal{F}, \{\mathcal{F}_{t}\}_{t \in \mathbb{T}},\mathbb{P})$ such that for each $t \in \mathbb{T}, X_{t}$  and $Y_{t}$ are random variables taking values in the measurable state space $(\Sigma, \mathcal{X})$ and
Y is $(s,r)-$SPL consistent upon $X$ on $[t_{0}, T)$.
If there exists $\gamma>0$ such that
\begin{equation}\label{PPP_help}
\lim_{\Delta t\downarrow 0 }\mathbb{P}\left(B_{\Delta t,\gamma}\right) = 1
\end{equation}
where
\begin{equation}\label{bounder_2}
B_{\Delta t,\gamma} = \left\{\omega\in \Omega : \Delta t' \in (0, \Delta t) \implies  
KL\left(P_{\Delta t'}^{(\omega)} \Bigg|\Bigg| M_{\Delta t'}^{(\omega)}\right) \leq \gamma\right\}
\end{equation}
for $\Delta t,\lambda>0$
and
 $ {\mathcal{E}\mathcal{P}\mathcal{T}}_{Y \rightarrow X}^{(s,r)}\mid_{t_0}^{T}<\infty$,
then
$$
\lim_{\Delta t \downarrow 0}\left[ \sum_{i = 0}^{\tau-1}       \mathbb{T}_{Y \rightarrow X}^{(k,l),\Delta t}\left(  \left\langle T,i,\Delta t \right\rangle\right)\right] = {\mathcal{E}\mathcal{P}\mathcal{T}}_{Y \rightarrow X}^{(s,r)}\mid_{t_0}^{T}
$$
where $k=\left\lfloor \frac{s}{\Delta t}\right\rfloor$ and $l=\left\lfloor \frac{r}{\Delta t}\right\rfloor$.
\end{corollary}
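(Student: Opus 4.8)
The plan is to follow the template of Corollary \ref{l1_bound}: the entire forward implication of Theorem \ref{main} carries over verbatim, since the uniform bound (\ref{bounder}) is invoked only once, namely to establish (\ref{point}) (equivalently (\ref{lim_diff})). Thus it suffices to prove (\ref{point}) --- that is, $\lim_{\epsilon\downarrow0}\lim_{\Delta t\downarrow0}\mathbb{E}_{\mathbb{P}}[\alpha_{\Delta t}^{\epsilon}]=0$ --- under the weaker hypothesis (\ref{PPP_help}) in place of (\ref{bounder}). All the almost-sure machinery preceding (\ref{point}) remains available: since $\mathcal{E}\mathcal{P}\mathcal{T}_{Y \rightarrow X}^{(s,r)}\mid_{t_0}^{T}<\infty$, Lemma \ref{KL_conv} gives $KL(P_{\Delta t}^{(\cdot)}|| M_{\Delta t}^{(\cdot)})\to g$ pointwise $\mathbb{P}$-a.s., hence in probability as in (\ref{measure_conv}), and each such divergence lies in $L_{1}(\mathbb{P})$ by consistency condition 2(c).

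First I would record the one structural consequence of (\ref{PPP_help}): for a fixed reference scale $\Delta t_0>0$ and any $\Delta t\in(0,\Delta t_0)$, membership $\omega\in B_{\Delta t_0,\gamma}$ forces $KL(P_{\Delta t}^{(\omega)}|| M_{\Delta t}^{(\omega)})\le\gamma$, directly from the definition (\ref{bounder_2}) of $B_{\Delta t_0,\gamma}$, whose defining inequality ranges over all $\Delta t'\in(0,\Delta t_0)$. Fixing $\epsilon>0$ and splitting the expectation over $B_{\Delta t_0,\gamma}$ and its complement then yields, for $\Delta t\in(0,\Delta t_0)$,
\begin{equation*}
\mathbb{E}_{\mathbb{P}}[\alpha_{\Delta t}^{\epsilon}]\le\gamma\,\mathbb{P}\left(\left\{\big|KL(P_{\Delta t}^{(\cdot)}|| M_{\Delta t}^{(\cdot)})-g\big|\ge\epsilon\right\}\right)+\mathbb{E}_{\mathbb{P}}\left[KL(P_{\Delta t}^{(\cdot)}|| M_{\Delta t}^{(\cdot)})\,\chi_{B_{\Delta t_0,\gamma}^{c}}\right].
\end{equation*}
The first summand tends to $0$ as $\Delta t\downarrow0$ by (\ref{measure_conv}), uniformly in $\Delta t_0$, exactly as (\ref{bound}) is used in Theorem \ref{main}.

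The remaining task --- and the main obstacle --- is to show the complement term is negligible in the iterated limit, since on $B_{\Delta t_0,\gamma}^{c}$ no pointwise bound on the divergence is available, so (\ref{PPP_help}) by itself controls only the \emph{probability} of the bad set, not the size of the divergence there. My plan is to dominate: the backward-martingale structure exhibited in the proof of Lemma \ref{KL_conv} (the conditional Jensen inequality (\ref{exp_bound_1}) applied pathwise) renders $KL(P_{\Delta t}^{(\omega)}|| M_{\Delta t}^{(\omega)})$ monotone along the refinement of the comb, so that for $\Delta t<\Delta t_0$ it is dominated by the fixed integrable envelope $KL(P_{\Delta t_0}^{(\cdot)}|| M_{\Delta t_0}^{(\cdot)})\in L_{1}(\mathbb{P})$ furnished by consistency condition 2(c). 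With an integrable dominator in hand and $KL(P_{\Delta t}^{(\cdot)}|| M_{\Delta t}^{(\cdot)})\to g$ a.s., the reverse Fatou lemma gives
\begin{equation*}
\limsup_{\Delta t\downarrow0}\mathbb{E}_{\mathbb{P}}\left[KL(P_{\Delta t}^{(\cdot)}|| M_{\Delta t}^{(\cdot)})\,\chi_{B_{\Delta t_0,\gamma}^{c}}\right]\le\mathbb{E}_{\mathbb{P}}\left[g\,\chi_{B_{\Delta t_0,\gamma}^{c}}\right].
\end{equation*}
Since $g\in L_{1}(\mathbb{P})$ and, by (\ref{PPP_help}), $\mathbb{P}(B_{\Delta t_0,\gamma}^{c})\to0$ as $\Delta t_0\downarrow0$, absolute continuity of the integral forces $\mathbb{E}_{\mathbb{P}}[g\,\chi_{B_{\Delta t_0,\gamma}^{c}}]\to0$. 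As the auxiliary scale $\Delta t_0$ is free, combining the two displays gives $\limsup_{\Delta t\downarrow0}\mathbb{E}_{\mathbb{P}}[\alpha_{\Delta t}^{\epsilon}]\le0$ for each $\epsilon>0$, which is (\ref{point}); the conclusion then follows exactly as in Theorem \ref{main} via (\ref{lim_h}) and (\ref{punch_line}).

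The delicate point I expect to have to argue carefully is the domination step: without the monotonicity supplied by the conditioning structure of Lemma \ref{KL_conv} one cannot pass $\limsup_{\Delta t\downarrow0}$ inside the expectation on $B_{\Delta t_0,\gamma}^{c}$. I would therefore verify explicitly that the reference envelope at scale $\Delta t_0$ genuinely dominates all finer scales along the refinement used in the comb construction of Section \ref{CPM}, so that the reverse Fatou estimate is legitimate.
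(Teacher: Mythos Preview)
Your reduction is right: following Corollary \ref{l1_bound}, everything in the forward direction of Theorem \ref{main} stands once (\ref{point}) is established, and both you and the paper proceed by splitting $\mathbb{E}_{\mathbb{P}}[\alpha_{\Delta t}^{\epsilon}]$ over a ``good'' set where the divergence is bounded by $\gamma$ and its complement. The difference is in the choice of the good set and the handling of the complement, and your choice creates a real problem.

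The paper splits at $B_{\Delta t,\gamma}$ \emph{with the same} $\Delta t$ as the divergence, not at a fixed reference scale $\Delta t_0$. On $\overline{B_{\Delta t,\gamma}}$ the paper does not try to dominate $KL(P_{\Delta t}^{(\cdot)}\|M_{\Delta t}^{(\cdot)})$ pointwise; it argues that this divergence lies in $L_{1}(\mathbb{P})$ for each small $\Delta t$ (via Lemma \ref{KL_conv} and consistency 2(c)) and that $\mathbb{P}(\overline{B_{\Delta t,\gamma}})\to 0$, concluding that the integral over the shrinking complement vanishes.

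Your approach instead freezes a reference scale $\Delta t_0$, and the complement term then requires a pointwise envelope for $KL(P_{\Delta t}^{(\omega)}\|M_{\Delta t}^{(\omega)})$ valid for all $\Delta t<\Delta t_0$. You propose $KL(P_{\Delta t_0}^{(\cdot)}\|M_{\Delta t_0}^{(\cdot)})$ as that envelope, citing the monotonicity from (\ref{exp_bound_1}). But this monotonicity runs the other way: refining the comb ($\Delta t<\Delta t_0$ along the divisibility order of Section \ref{CPM}) enlarges the $\sigma$-algebra $\mathcal{F}_{\Delta t}^{[t_0,T)}$, and the conditional-Jensen/data-processing inequality encoded in (\ref{exp_bound_1}) gives
\[
KL\bigl(P_{\Delta t_0}^{(\omega)}\,\|\,M_{\Delta t_0}^{(\omega)}\bigr)\ \le\ KL\bigl(P_{\Delta t}^{(\omega)}\,\|\,M_{\Delta t}^{(\omega)}\bigr),
\]
not the reverse. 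So the coarse-scale divergence is a \emph{lower} bound, not an upper envelope, and the reverse Fatou step has no dominator. The verification you flag at the end will fail, and with it the whole complement estimate; this is precisely the situation in which (\ref{PPP_help}) has content beyond Corollary \ref{l1_bound}. To repair the argument you should follow the paper and take the good set to move with $\Delta t$.
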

\begin{proof}
As in Corollary \ref{l1_bound}, it suffices to show that (\ref{point}) holds whenever both (\ref{PPP_help}) and ${\mathcal{E}\mathcal{P}\mathcal{T}}_{Y \rightarrow X}^{(s,r)}\mid_{t_0}^{T}<\infty$ hold.  Observe that 
\begin{equation*}
\begin{aligned}
 &\mathbb{E}_{\mathbb{P}}\left[KL\left( P_{\Delta t}^{(\cdot)}|| M_{\Delta t}^{(\cdot)} \right) \chi_{ \left\{\Big|KL\left( P_{\Delta t}^{(\cdot)}|| M_{\Delta t}^{(\cdot)} \right) - g \Big| \geq \epsilon\right\} \bigcap B_{\Delta t}}\right]\\
& \leq \mathbb{E}_{\mathbb{P}}\left[  \gamma \chi_{\left\{\Big|KL\left( P_{\Delta t}^{(\cdot)}|| M_{\Delta t}^{(\cdot)} \right) - g \Big| \geq \epsilon\right\}}\right] \rightarrow 0 
\end{aligned}
\end{equation*} 
as $\Delta t \downarrow 0$, since clearly $\gamma\in L_{1}\left( \Omega, \mathcal{F}, \mathbb{P}\right)$.  Let $\tau' = \left\lfloor\frac{T}{\Delta t'}\right\rfloor- \left\lfloor\frac{t_{0}}{\Delta t'}\right\rfloor$ for $\Delta t'>0$ and observe that since $ {\mathcal{E}\mathcal{P}\mathcal{T}}_{Y \rightarrow X}^{(s,r)}\mid_{t_0}^{T}<\infty,$ Lemma \ref{KL_conv} implies that 
$$
KL\left( \prod_{i = 0}^{\tau'-1} \mathbb{P}_{X| \overleftarrow{X}, \overleftarrow{Y},i,\Delta t'}^{(\cdot),(k,l)} \Bigg|\Bigg| \prod_{i = 0}^{\tau'-1} \mathbb{P}_{X| \overleftarrow{X},i,\Delta  t'}^{(\cdot),(k)}\right)\in L_{1}\left(\Omega, \mathcal{F}, \mathbb{P} \right)
$$ for all $\Delta t'>0$ in a small enough neighborhood of 0;  moreover,
\begin{equation*}
\begin{aligned}
&\mathbb{E}_{\mathbb{P}}\left[KL\left( P_{\Delta t}^{(\cdot)}|| M_{\Delta t}^{(\cdot)} \right) \chi_{ \left\{\Big|KL\left( P_{\Delta t}^{(\cdot)}|| M_{\Delta t}^{(\cdot)} \right) - g \Big| \geq \epsilon\right\} \bigcap \overline{B_{\Delta t}}}\right]\rightarrow 0
\end{aligned}
\end{equation*}
as $\Delta t \downarrow 0$ since $\mathbb{P}\left(\overline{B_{\Delta t}} \right)\rightarrow 0$.  Now for any $\epsilon>0$,
\begin{align*}
    \mathbb{E}_{\mathbb{P}}\left[\alpha _{\Delta t}^{\epsilon}\right] &= \mathbb{E}_{\mathbb{P}}\left[KL\left( P_{\Delta t}^{(\cdot)}|| M_{\Delta t}^{(\cdot)} \right) \chi_{ \left\{\Big|KL\left( P_{\Delta t}^{(\cdot)}|| M_{\Delta t}^{(\cdot)} \right) - g \Big| \geq \epsilon\right\} }\right]
    \\& = \mathbb{E}_{\mathbb{P}}\left[KL\left( P_{\Delta t}^{(\cdot)}|| M_{\Delta t}^{(\cdot)} \right) \chi_{ \left\{\Big|KL\left( P_{\Delta t}^{(\cdot)}|| M_{\Delta t}^{(\cdot)} \right) - g \Big| \geq \epsilon\right\} \bigcap B_{\Delta t}}\right]  \\ &\qquad  + \mathbb{E}_{\mathbb{P}}\left[KL\left( P_{\Delta t}^{(\cdot)}|| M_{\Delta t}^{(\cdot)} \right) \chi_{ \left\{\Big|KL\left( P_{\Delta t}^{(\cdot)}|| M_{\Delta t}^{(\cdot)} \right) - g \Big| \geq \epsilon\right\} \bigcap \overline{B_{\Delta t}}}\right] \rightarrow 0
\end{align*}
as $ \Delta t\downarrow 0$, proving the corollary.
\end{proof}

\section{Application: lagged Poisson point process}
\label{Application: Lagged Poisson point process}

Below, we provide an example of two processes which satisfy (\ref{bounder}) of Theorem \ref{main} under a certain assumption on $r$. 
In the following example, we consider TE from a time-lagged version of the counting process of a Time-Homogeneous Poisson Point Process (THPPP) to itself, a case through which we demonstrate the applicability of our results.
	
Suppose $[t_{0}, T)\subset \mathbb{T} \subset\mathbb{R},X = \left(X_{t} \right)_{t\in\mathbb{T}}$ is the counting process of a THPPP with intensity $\lambda$.  Suppose further that $\epsilon >0$ and $Y=\left( Y_{t}\right)_{t\in\mathbb{T}}$ $Y_{t} = X_{t+\epsilon}, \forall t \geq -\epsilon.$ If $X$ is the counting process with intensity $\lambda >0$ of a THPPP $\psi:=\left( T_{n}\right)_{n\geq 1}$, then $Y$ is also a counting process of a THPPP with intensity $\lambda>0$, specifically that of the point process $\psi':=\left( T_{n}-\epsilon \right)_{n\geq 1}$.  Note that the state space of $X_{t}$ is the natural numbers for any $t\in[t_{0},T)$; a Polish space with discrete metric.  For any $\omega \in \Omega$, $\Delta t>0$ and $i = 0,1,\ldots,\tau-1$ we have
	\begin{align*}
	&\mathbb{P}_{\left\langle T, i, \Delta t\right\rangle}\left( X_{\left\langle T, i, \Delta t\right\rangle}   \middle|  \left(X_{\left\lfloor\frac{T}{\Delta t}\right\rfloor \Delta t-(i+k+1)\Delta t}^{\left\langle T,i+1,\Delta t \right\rangle}\right) \right)(\omega)\left( b_{\left\langle T, i, \Delta t\right\rangle}\right)\\
	& = \mathbb{P}_{\left\langle T, i, \Delta t\right\rangle}\left( X_{\left\langle T, i, \Delta t\right\rangle} \middle|  X_{\left\langle T,i+1,\Delta t \right\rangle} \right)(\omega)\left( b_{\left\langle T, i, \Delta t\right\rangle}\right)\\
	& = \mathbb{P}\left(X_{\left\langle T, i, \Delta t\right\rangle} - X_{\left\langle T,i+1,\Delta t \right\rangle } = b_{\left\langle T,i,\Delta t \right\rangle}  - X_{\left\langle T,i+1,\Delta t \right\rangle }(\omega)\right)\\
	& = e^{-\lambda \Delta t }\left(\frac{\left(\lambda \Delta t\right) ^{b_{\left\langle T,i,\Delta t \right\rangle}  - X_{\left\langle T,i+1,\Delta t \right\rangle }(\omega)}}{\left(b_{\left\langle T,i,\Delta t \right\rangle}  - X_{\left\langle T,i+1,\Delta t \right\rangle }(\omega)\right)!}\right)\\
	& = \pois\left( \lambda \Delta t; b_{\left\langle T,i,\Delta t \right\rangle}  - X_{\left\langle T,i+1,\Delta t \right\rangle }(\omega)\right)
	\end{align*}
	where $\pois\left(x,n \right) = \frac{e^{-x}x^n}{n!}$ for $x>0$ and integers $n\geq 0.$
	
	Suppose that $[t_{0}-\max{(\epsilon,s)},T)\subset \mathbb{T}$ and  $0<r<\epsilon$.  Then $\exists \Delta t^{\star}>0$ such that $0<j\Delta t^{\star}<\epsilon$, $\forall j = 1,2,\cdots, \left\lfloor\frac{r}{\Delta t^{\star}} \right\rfloor$. Letting $L = \left\lfloor\frac{r}{\Delta t^{\star}} \right\rfloor$ we get that 
	\begin{equation}
	\begin{aligned}
	&\mathbb{P}_{\left\langle T,i,\Delta t^{\star} \right\rangle}\left( X_{\left\langle T,i,\Delta t^{\star} \right\rangle}\middle| \left(X_{\left\langle T,i+k+1,\Delta t^{\star} \right\rangle}^{\left\langle T,i+1,\Delta t^{\star} \right\rangle}\right),  \left(Y_{               \left\langle T,i+L+1,\Delta t^{\star} \right\rangle }^{\left\langle T,i+1,\Delta t^{\star} \right\rangle} \right)\right)(\omega)
	\\
	& = \mathbb{P}_{\left\langle T,i,\Delta t^{\star} \right\rangle}\left( X_{\left\langle T,i,\Delta t^{\star} \right\rangle}\middle| X_{\left\langle T,i+1,\Delta t^{\star} \right\rangle},  X_{               \left\langle T,i+L,\Delta t^{\star} \right\rangle +\epsilon }\right)(\omega)(\cdot)\\
	& = \frac{\pois\left( \lambda(\epsilon - L\Delta t^{\star});X_{\left\langle T,i+L,\Delta t^{\star} \right\rangle +\epsilon}(\omega) - b_{\left\langle T,i,\Delta t^{\star} \right\rangle} \right) \cdot p_{\Delta t^{{\star}},i,\omega}}{\pois\left( \lambda \left((1-L)\Delta t^{\star} +\epsilon \right); X_{\left\langle T,i+L,\Delta t^{\star} \right\rangle +\epsilon}(\omega) -  X_{\left\langle T,i+1,\Delta t^{\star} \right\rangle }(\omega)\right)}
	\\& =: f_{\epsilon, \lambda, \omega}(\Delta t^{\star}, i, b_{\left\langle T,i,\Delta t^{\star} \right\rangle} )
	\end{aligned}
	\end{equation}
	where we define $p_{\Delta t^{\star},i,\omega} = \pois\left( \lambda\Delta t^{\star}; b_{\left\langle T,i,\Delta t^{\star} \right\rangle} -  X_{\left\langle T,i+1,\Delta t^{\star} \right\rangle}(\omega)\right)$.  Let $a_{\omega,i} = X_{\left\langle T,i+1,\Delta t^{\star} \right\rangle}(\omega)$ and $c_{\omega,i} = X_{\left\langle T,i+L,\Delta t^{\star} \right\rangle +\epsilon}(\omega)$ and observe that for any $i = 0,1,\ldots,\left\lfloor\frac{T}{\Delta t^{\star}}\right\rfloor - \left\lfloor\frac{t_{0}}{\Delta t^{\star}}\right\rfloor-1$ we have that
	\begin{small}
	\begin{align*}
	&KL\left(\mathbb{P}_{X| \overleftarrow{X}, \overleftarrow{Y},i,\Delta t^{\star}}^{(\omega),(k,l)} \middle| \middle| \mathbb{P}_{X| \overleftarrow{X},i,\Delta t^{\star}}^{(\omega),(k)} \right)\\
	& = \sum_{b \in Range\left( X_{T-i\Delta t^{\star}}\right)}
	 f_{\epsilon, \lambda, \omega}(\Delta t^{\star}, i, b )\log\frac{f_{\epsilon, \lambda, \omega}(\Delta t^{\star}, i, b )}{\pois\left( \lambda \Delta t^{\star}; b  - X_{\left\langle T,i+1,\Delta t^{\star} \right\rangle }(\omega)\right)} \\
	& = \sum_{a_{\omega}\leq b \leq c_{\omega} }
	 f_{\epsilon, \lambda, \omega}(\Delta t^{\star}, i, b )\log\frac{f_{\epsilon, \lambda, \omega}(\Delta t^{\star}, i, b )}{\pois\left( \lambda \Delta t^{\star}; b  - X_{\left\langle T,i+1,\Delta t^{\star} \right\rangle }(\omega)\right)} \\
	& = \sum_{0 \leq b \leq c_{\omega} - a_{\omega} }
	 f_{\epsilon, \lambda, \omega}(\Delta t^{\star}, i, a_{\omega}+b )\log\frac{\pois\left( \lambda(\epsilon - L\Delta t^{\star});c_{\omega} -a_{\omega}- b \right)}{  \pois\left( \lambda \left((1-L)\Delta t^{\star} +\epsilon \right);c_{\omega} - a_{\omega}\right)  }\\
	& = \sum_{0 \leq b \leq c_{\omega} - a_{\omega} }\binom{c_{\omega} - a_{\omega}}{b}\left( \frac{\Delta t^{\star}}{\epsilon - L\Delta t^{\star}}\right)^{b}\left( \frac{\epsilon -L\Delta t^{\star}}{\epsilon +(1-L)\Delta t^{\star}}\right)^{c_{\omega} - a_{\omega}}\times \\ &\qquad \left[ \lambda \Delta t^{\star} +\log\left(\frac{(c_{\omega}-a_{\omega})^{\underline{b}}}{\left(\lambda(\epsilon - L \Delta t^{\star}) \right)^{b}} \right) - (c_{\omega} - a_{\omega})\log\left(1+ \frac{\Delta t^{\star}}{\epsilon - L\Delta t^{\star}}\right)\right] \\
	& = \left[\eta\left( \left( \frac{\epsilon -L\Delta t^{\star}}{\epsilon +(1-L)\Delta t^{\star}}\right)^{c_{\omega} - a_{\omega}}\right) + \lambda \Delta t^{\star}\left( \frac{\epsilon -L\Delta t^{\star}}{\epsilon +(1-L)\Delta t^{\star}}\right)^{c_{\omega} - a_{\omega}} \right]\sum_{b=0}^{ c_{\omega} - a_{\omega} }\zeta_{\Delta t^{\star}}(b) \\ & \qquad+ \left( \left( \frac{\epsilon -L\Delta t^{\star}}{\epsilon +(1-L)\Delta t^{\star}}\right)^{c_{\omega} - a_{\omega}}\right) \sum_{b=0}^{ c_{\omega} - a_{\omega} }\zeta_{\Delta t^{\star}}(b)\log\left( \frac{\left(c_{\omega} - a_{\omega} \right)^{\underline{b}}}{\lambda^{b}(\epsilon - \lambda \Delta t^{\star})^{b}}\right)
	\end{align*}
	\end{small}
where $\zeta_{\Delta t^{\star}}(b) = \binom{c_{\omega} - a_{\omega}}{b}\left( \frac{\Delta t^{\star}}{\epsilon - L\Delta t^{\star}}\right)^{b}$ for $0\leq b \leq c_{\omega} - a_{\omega}$, $\eta(x) = x\log(x)$ for $x>0$ and  $x^{\underline{b}} := b! \binom{x}{b}$ denotes the $b${\em -th falling factorial of} $x$.
We suppose now that $\forall \omega \in \Omega, \exists \Delta t_{\omega}>0$ such that $X_{t+{\Delta t_{\omega}}}(\omega) - X_{t}(\omega) \leq 1$ for all $ t\in [t_{0},T)$; that is, there is no more than one event in any interval of length $\Delta t_{\omega}$. 
Under this assumption, if $\omega \in \Omega$ and $0 <\Delta t <\min\left\{\Delta t_{\omega}, \Delta t^{\star}\right\}$, then
\begin{small}
\begin{align*}
&KL\left(\mathbb{P}_{X| \overleftarrow{X}, \overleftarrow{Y},i,\Delta t}^{(\omega),(k,l)} \middle| \middle| \mathbb{P}_{X| \overleftarrow{X},i,\Delta t}^{(\omega),(k)} \right)\\
	& =  \sum_{a_{\omega,i}\leq b \leq e_{\omega,i} }
	\left[ f_{\epsilon, \lambda, \omega}(\Delta t, i, b )\log\left(\frac{f_{\epsilon, \lambda, \omega}(\Delta t, i, b )}{\pois\left( \lambda \Delta t; b  - X_{\lfloor\frac{T}{\Delta t}\rfloor\Delta t -(i+1)\Delta t }(\omega)\right)} \right)\right] \\
	& = \left[\eta\left( \left( \frac{\epsilon -L\Delta t}{\epsilon +(1-L)\Delta t}\right)^{d_{\omega}}\right) + \lambda \Delta t\left( \frac{\epsilon -L\Delta t}{\epsilon +(1-L)\Delta t}\right)^{d_{\omega} } \right]\sum_{b=0}^{d_{\omega}} \binom{d_{\omega} }{b}\left( \frac{\Delta t}{\epsilon - L\Delta t}\right)^{b} \\ &\qquad + \left( \left( \frac{\epsilon -L\Delta t}{\epsilon +(1-L)\Delta t}\right)^{d_{\omega}}\right) \sum_{b=0}^{d_{\omega}}\binom{d_{\omega}}{b}\left( \frac{\Delta t}{\epsilon - L\Delta t}\right)^{b}\log\left( \frac{\left(d_{\omega} \right)^{\underline{b}}}{\lambda^{b}(\epsilon - \lambda \Delta t)^{b}}\right)
	\end{align*}
	\end{small}
	where $e_{\omega,i}\in \{a_{\omega,i}, a_{\omega,i}+1 \}$ and $d_{\omega,i}\in \{0,1\}.$
	For any $i = 0,1,\ldots, \tau-1$, if $d_{\omega,i} = 0,$
	then \begin{equation*}
	\begin{aligned}
	KL\left(\mathbb{P}_{X| \overleftarrow{X}, \overleftarrow{Y},i,\Delta t}^{(\omega),(k,l)} \middle| \middle| \mathbb{P}_{X| \overleftarrow{X},i,\Delta t}^{(\omega),(k)} \right) = \lambda \Delta t
	\end{aligned}
	\end{equation*}
	and if $d_{\omega,i} = 1$, then 
	\begin{equation*}
	\begin{aligned}
	&KL\left(\mathbb{P}_{X| \overleftarrow{X}, \overleftarrow{Y},i,\Delta t}^{(\omega),(k,l)} \middle| \middle| \mathbb{P}_{X| \overleftarrow{X},i,\Delta t}^{(\omega),(k)} \right) \\&= \lambda\Delta t\left( \frac{\epsilon -L\Delta t}{\epsilon +(1-L)\Delta t}\right) + \eta\left( \left( \frac{\epsilon -L\Delta t}{\epsilon +(1-L)\Delta t}\right)\right) \\
	&\qquad +\frac{\lambda(\Delta t)^2-\log(\lambda)\Delta t}{\epsilon +(1-L)\Delta t} + \Delta t \eta\left( \frac{1}{\epsilon +(1-L)\Delta t}\right)\\
	& =:S(\lambda, \Delta t).
	\end{aligned}
	\end{equation*}
	Recall that 
	\begin{equation*}
	KL\left(P_{\Delta t}^{(\omega)} \middle|\middle| M_{\Delta t}^{(\omega)}\right)
	= \sum_{i = 0}^{\tau-1}KL\left( \mathbb{P}_{X| \overleftarrow{X}, \overleftarrow{Y},i,\Delta t}^{(\omega),(k,l)} \middle|\middle| \mathbb{P}_{X| \overleftarrow{X},i,\Delta t}^{(\omega),(k)}\right)
	\end{equation*}
	from the proof of Theorem 4 and let $Q_{\omega,\Delta t} = \sum_{i=0}^{\tau-1}d_{\omega,i}.$
	Then $\forall \omega \in \Omega$ we have that
	\begin{align*}\label{S_bound}
	&KL\left( \prod_{i = 0}^{\tau-1} \mathbb{P}_{X| \overleftarrow{X}, \overleftarrow{Y},i,\Delta t}^{(\omega),(k,l)} \middle|\middle|  \prod_{i = 0}^{\tau-1} \mathbb{P}_{X| \overleftarrow{X},i,\Delta t}^{(\omega),(k)}\right)\\
	& =  \sum_{i = 0}^{\tau-1}KL\left( \mathbb{P}_{X| \overleftarrow{X}, \overleftarrow{Y},i,\Delta t}^{(\omega),(k,l)} \middle|\middle|   \mathbb{P}_{X| \overleftarrow{X},i,\Delta t}^{(\omega),(k)}\right) \\
	&= \left(\tau -Q_{\omega,\Delta t}\right)\lambda \Delta t + Q_{\omega, \Delta t}S(\lambda, \Delta t)\\
	& = \lambda \tau \Delta t + Q_{\omega, \Delta t}\left( S(\lambda,\Delta t) - \lambda\Delta t\right)\\
	&\leq \tau S(\lambda, \Delta t).
	\end{align*}
	Since whenever $0 < r <\epsilon$, \begin{equation}\label{ppp_lim}
	\lim_{\Delta t \downarrow 0} \tau S(\lambda, \Delta t)  = (T - t_{0})\left(\lambda - \frac{\log\left( \lambda (\epsilon - r)\right)}{\epsilon - r}\right),
	\end{equation}
	the quantity $KL\left( \prod_{i = 0}^{\tau-1} \mathbb{P}_{X| \overleftarrow{X}, \overleftarrow{Y},i,\Delta t}^{(\omega),(k,l)} \middle|\middle|  \prod_{i = 0}^{\tau-1} \mathbb{P}_{X| \overleftarrow{X},i,\Delta t}^{(\omega),(k)}\right)$ is bounded in a sufficiently small neighborhood of $0$. Note that this limit is independent of the sample path.

	For each $\Delta t>0$ let $A_{\Delta t} = \left\{ \omega \in \Omega :  X_{t+\Delta t}(\omega) - X_{t}(\omega) \leq 1, \forall t\in[t_{0},T) \right\}$ and $B_{\Delta t,\gamma}$ be as in Corollary 5.2; that is,  
	$$
	B_{\Delta t,\gamma} = \left\{\omega\in \Omega : \Delta t' \in (0, \Delta t) \implies  
	KL\left(P_{\Delta t}^{(\omega)} \middle|\middle| M_{\Delta t}^{(\omega)}\right) \leq \gamma\right\}.
	$$
	Fix $\gamma > (T - t_{0})\left(\lambda - \frac{\log\left( \lambda (\epsilon - r)\right)}{\epsilon - r}\right)$.  We have now shown that for all $\Delta t>0$, there exists $0<\widetilde{{\Delta t}}<\Delta t$ such that $A_{\Delta t} \subset B_{\widetilde{\Delta t},\gamma}.$   Furthermore, since $\left( B_{\Delta t,\gamma}\right)_{\Delta t>0}$ is a decreasing collection of sets, 
	\begin{equation}\label{dec_sets}
	\mathbb{P}\left( A_{\Delta t}\right)\leq \mathbb{P}\left( B_{\widetilde{\Delta t},\gamma}\right) \leq \mathbb{P}\left( B_{\Delta t',\gamma}\right) \text{ for all }0 <\Delta t' < \widetilde{\Delta t}.
	\end{equation}
	Due to standard properties of the Poisson point process we have that $\mathbb{P}\left( A_{\Delta t}\right) = 1 - o(\Delta t)$; thus $\mathbb{P}\left( A_{\Delta t}\right)\rightarrow 1$ as $\Delta t\downarrow 0$.  Now (\ref{dec_sets}) yields that $\mathbb{P}\left(B_{\Delta t,\gamma} \right)\rightarrow 1$ as $\Delta t\downarrow 0$, which establishes the existence of processes that satisfy (\ref{PPP_help}) for some $\gamma>0$.

\section{Transfer Entropy Rate}

The generalization of information theoretic measures to the framework of information rates is a common paradigm in information theory.  In this section we address the topic of instantaneous information transfer between processes using our methodology. We begin by defining  transfer entropy rate using the EPT as follows\footnote{A similar definition appears in \cite{spl}.}: 
\begin{definition}
For $t \in [t_{0},T)$, define the {\em transfer entropy rate} from $Y$ to $X$ at $t$, denoted  $\mathbb{T}_{Y \rightarrow X}^{(s,r)}(t)$, by 
\begin{equation}\label{TERate}
\mathbb{T}_{Y \rightarrow X}^{(s,r)}(t)  = \lim_{\Delta t\downarrow 0}\frac{1}{\Delta t}\left({\mathcal{E}\mathcal{P}\mathcal{T}}_{Y \rightarrow X}^{(s,r)}\mid_{t}^{t+\Delta t}\right)
\end{equation}
whenever the limit in (\ref{TERate}) exists.
\end{definition}
\begin{remark}
Suppose the hypotheses of Theorem \ref{main} hold for processes $X$ and $Y$. If $t\in[t_{0},T)$ and $\exists \delta>0$ such that  ${\mathcal{E}\mathcal{P}\mathcal{T}}_{Y \rightarrow X}^{(s,r)}\mid_{t}^{t+dt}<\infty$, for all $dt \in\left(t, t+\delta \right)$, then 
\begin{equation*}
\begin{aligned}\mathbb{T}_{Y \rightarrow X}^{(s,r)}(t) & = \lim_{dt\downarrow 0}\frac{1}{dt}\left({\mathcal{E}\mathcal{P}\mathcal{T}}_{Y \rightarrow X}^{(s,r)}\mid_{t}^{t+ dt}\right)\\
&= \lim_{\substack{dt \downarrow 0 \\ \Delta t \downarrow 0}}\left[ \frac{1}{dt}\sum_{i = 0}^{\left\lfloor \frac{t+dt}{\Delta t}\right\rfloor - \left\lfloor\frac{t}{\Delta t}\right\rfloor -1}       \mathbb{T}_{Y \rightarrow X}^{(k,l),\Delta t}\left(  \left\langle T,i,\Delta t^{\star} \right\rangle\right)\right].
\end{aligned}
\end{equation*}
\end{remark}

Assuming some smoothness of the EPT, we can recover it at any time given the rate by using the following straightforward result.
\begin{lemma}\label{easy1}
If $ [t_{0},T]\ni t \mapsto {\mathcal{E}\mathcal{P}\mathcal{T}}_{Y \rightarrow X}^{(s,r)}\mid_{t_{0}}^{t} \in \mathcal{C}^{1}\left([t_{0},T]\right)$ , then \begin{equation*}
\int_{t_{0}}^{T}\mathbb{T}_{Y \rightarrow X}^{(s,r)}(t) dt = {\mathcal{E}\mathcal{P}\mathcal{T}}_{Y \rightarrow X}^{(s,r)}\mid_{t_{0}}^{T}.
\end{equation*}
\end{lemma}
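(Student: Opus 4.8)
The plan is to recognize the transfer entropy rate as the right-hand derivative of the accumulated EPT and then invoke the Fundamental Theorem of Calculus. Write $G(t) := {\mathcal{E}\mathcal{P}\mathcal{T}}_{Y \rightarrow X}^{(s,r)}\mid_{t_{0}}^{t}$ for $t \in [t_{0},T]$, so that the hypothesis is precisely $G \in \mathcal{C}^{1}([t_{0},T])$. Definition (\ref{TERate}) reads $\mathbb{T}_{Y \rightarrow X}^{(s,r)}(t) = \lim_{\Delta t \downarrow 0} \frac{1}{\Delta t} {\mathcal{E}\mathcal{P}\mathcal{T}}_{Y \rightarrow X}^{(s,r)}\mid_{t}^{t+\Delta t}$, so the crux is to identify $\frac{1}{\Delta t}{\mathcal{E}\mathcal{P}\mathcal{T}}_{Y \rightarrow X}^{(s,r)}\mid_{t}^{t+\Delta t}$ with the difference quotient $\frac{G(t+\Delta t) - G(t)}{\Delta t}$; equivalently, to establish the additivity relation
\begin{equation*}
{\mathcal{E}\mathcal{P}\mathcal{T}}_{Y \rightarrow X}^{(s,r)}\mid_{t_{0}}^{t+\Delta t} = {\mathcal{E}\mathcal{P}\mathcal{T}}_{Y \rightarrow X}^{(s,r)}\mid_{t_{0}}^{t} + {\mathcal{E}\mathcal{P}\mathcal{T}}_{Y \rightarrow X}^{(s,r)}\mid_{t}^{t+\Delta t}
\end{equation*}
for $t_{0} \le t < t+\Delta t \le T$.

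To obtain additivity I would work at the level of the discrete approximants. By Theorem \ref{main} together with (\ref{exp_is_sum}), each EPT above is the limit as the mesh tends to $0$ of a finite sum $\sum_{i} \mathbb{T}_{Y \rightarrow X}^{(k,l),\Delta t}(\langle \cdot, i, \cdot \rangle)$ indexed by the grid points in the corresponding interval. The essential point is that each summand is a \emph{local} quantity: it is the expected KL-divergence between the one-step conditional laws $P^{(k,l)}$ and $M^{(k)}$ attached to a single grid time, and hence depends only on that grid point (an integer multiple of the mesh) and the fixed window lengths $s,r$, not on which subinterval is under consideration. Since the grids for $[t_{0},t)$, $[t,t+\Delta t)$, and $[t_{0},t+\Delta t)$ consist of the same points of $\Delta t' \mathbb{Z}$, the sum over $[t_{0},t+\Delta t)$ splits as the sum over $[t_{0},t)$ plus the sum over $[t,t+\Delta t)$ up to at most one boundary cell, whose contribution is $O(\Delta t')$ and vanishes in the limit. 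Passing to the limit yields additivity, whence $\mathbb{T}_{Y \rightarrow X}^{(s,r)}(t) = \lim_{\Delta t \downarrow 0} \frac{G(t+\Delta t) - G(t)}{\Delta t} = G'_{+}(t)$, the right-hand derivative of $G$.

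With additivity in hand the remainder is routine. Because $G \in \mathcal{C}^{1}([t_{0},T])$, it is differentiable with continuous derivative, so its right-hand derivative agrees with $G'$ everywhere; thus $\mathbb{T}_{Y \rightarrow X}^{(s,r)}(t) = G'(t)$, a continuous (hence integrable) function on $[t_{0},T]$. The Fundamental Theorem of Calculus then gives
\begin{equation*}
\int_{t_{0}}^{T} \mathbb{T}_{Y \rightarrow X}^{(s,r)}(t)\, dt = \int_{t_{0}}^{T} G'(t)\, dt = G(T) - G(t_{0}).
\end{equation*}
Finally $G(t_{0}) = {\mathcal{E}\mathcal{P}\mathcal{T}}_{Y \rightarrow X}^{(s,r)}\mid_{t_{0}}^{t_{0}} = 0$, since over the degenerate interval $[t_{0},t_{0})$ the two path measures coincide and the integrand $\log \frac{dP}{dM}$ is identically $0$; hence the integral equals $G(T) = {\mathcal{E}\mathcal{P}\mathcal{T}}_{Y \rightarrow X}^{(s,r)}\mid_{t_{0}}^{T}$, as claimed. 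I expect the additivity step to be the only genuine obstacle: one must verify that the per-grid-point transfer entropy contributions truly do not depend on the enclosing interval and that the single boundary cell created when splitting the grid is negligible as $\Delta t \downarrow 0$; the differentiation and FTC steps are then immediate from the $\mathcal{C}^{1}$ hypothesis.
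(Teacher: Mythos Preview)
Your core argument coincides with the paper's: apply the Fundamental Theorem of Calculus to $G(t)={\mathcal{E}\mathcal{P}\mathcal{T}}_{Y \rightarrow X}^{(s,r)}\mid_{t_{0}}^{t}$ and observe $G(t_{0})=0$ (the paper writes this as $\mathbb{E}_{\mathbb{P}}[\log 1]=0$). That is the entire proof given in the paper.

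Where you diverge is in the additivity step. You correctly notice that, as stated, the rate is defined through ${\mathcal{E}\mathcal{P}\mathcal{T}}_{Y \rightarrow X}^{(s,r)}\mid_{t}^{t+\Delta t}$ rather than through the increment $G(t+\Delta t)-G(t)$, so identifying $\mathbb{T}_{Y\to X}^{(s,r)}(t)$ with $G'(t)$ requires an additivity relation that the paper simply takes for granted. Your proposed justification routes through Theorem~\ref{main} and the discrete sums~(\ref{exp_is_sum}); this is a reasonable heuristic, but it imports the consistency and boundedness hypotheses of Theorem~\ref{main}, none of which appear in the lemma. In effect you prove a more restricted statement than the one asserted. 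The paper, by contrast, treats the identification $\mathbb{T}_{Y\to X}^{(s,r)}(t)=G'(t)$ as immediate from the $\mathcal{C}^{1}$ hypothesis (cf.\ the remark following the lemma about imposing two-sided differentiability), so the proof is a one-liner. Your version is more scrupulous about a point the paper elides, at the cost of extra machinery that is not strictly needed for the lemma as the authors intend it.
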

\begin{proof}
From the fundamental theorem of calculus, we have that
\begin{equation*}
\begin{aligned}
\int_{t_{0}}^{T}\mathbb{T}_{Y \rightarrow X}^{(s,r)}(t) dt &= {\mathcal{E}\mathcal{P}\mathcal{T}}_{Y \rightarrow X}^{(s,r)}\mid_{t_{0}}^{T} - {\mathcal{E}\mathcal{P}\mathcal{T}}_{Y \rightarrow X}^{(s,r)}\mid_{t_{0}}^{t_{0}} \\ &= {\mathcal{E}\mathcal{P}\mathcal{T}}_{Y \rightarrow X}^{(s,r)}\mid_{t_{0}}^{T} - \mathbb{E}_{\mathbb{P}}\left[\log(1)\right]\\
& = {\mathcal{E}\mathcal{P}\mathcal{T}}_{Y \rightarrow X}^{(s,r)}\mid_{t_{0}}^{T} .
\end{aligned}
\end{equation*}
\end{proof}
Note that we have imposed differentiablity in Lemma \ref{easy1}; not just right-hand differentiability.
\begin{lemma}\label{rateiscontant}
Suppose $t_{0}$ and $T$ are distinct elements of $\mathbb{T}$ and $r,s>0$ satisfy $\left( t_{0} - \max{(s,r)}, T\right) \subset \mathbb{T}$. If $Y$ is $(s,r)$-consistent upon $X$ on $[t_{0},T)$ and ${\mathcal{E}\mathcal{P}\mathcal{T}}_{Y \rightarrow X}^{(s,r)}\mid_{t_{0}}^{\cdot}$ is linear on $\left[t_0,T \right]$, then for any $t\in[t_{0},T)$ $$\mathbb{T}_{Y \rightarrow X}^{(s,r)}(t) = \frac{1}{T-t_{0}}{\mathcal{E}\mathcal{P}\mathcal{T}}_{Y \rightarrow X}^{(s,r)}\mid_{t_{0}}^{T}.$$
\end{lemma}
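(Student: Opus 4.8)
The plan is to reduce the statement to an \emph{additivity} property of the EPT across an interior time and then exploit linearity. First I would record that the EPT over a degenerate interval vanishes: exactly as in the proof of Lemma \ref{easy1}, the relevant Radon--Nikodym derivative is identically $1$, so ${\mathcal{E}\mathcal{P}\mathcal{T}}_{Y\rightarrow X}^{(s,r)}\mid_{t_0}^{t_0}=\mathbb{E}_{\mathbb{P}}[\log 1]=0$. Consequently, linearity of $u\mapsto {\mathcal{E}\mathcal{P}\mathcal{T}}_{Y\rightarrow X}^{(s,r)}\mid_{t_0}^{u}$ on $[t_0,T]$ forces
$$
{\mathcal{E}\mathcal{P}\mathcal{T}}_{Y\rightarrow X}^{(s,r)}\mid_{t_0}^{u}=c\,(u-t_0),\qquad c:=\frac{1}{T-t_0}\,{\mathcal{E}\mathcal{P}\mathcal{T}}_{Y\rightarrow X}^{(s,r)}\mid_{t_0}^{T},
$$
the value of $c$ being obtained by setting $u=T$.

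The crux is then the additivity claim: for $t\in[t_0,T)$ and all sufficiently small $h>0$ (using $h$ to denote the outer increment of the rate, so as not to clash with the comb step),
$$
{\mathcal{E}\mathcal{P}\mathcal{T}}_{Y\rightarrow X}^{(s,r)}\mid_{t}^{t+h}={\mathcal{E}\mathcal{P}\mathcal{T}}_{Y\rightarrow X}^{(s,r)}\mid_{t_0}^{t+h}-{\mathcal{E}\mathcal{P}\mathcal{T}}_{Y\rightarrow X}^{(s,r)}\mid_{t_0}^{t}.
$$
To prove this I would appeal to the discrete-sum representation established inside the proof of Theorem \ref{main}. Equation (\ref{exp_is_sum}) shows, via the product factorization of Lemma \ref{fixedR-N}, that for each comb step $\Delta t>0$ the expected KL divergence over an interval equals the sum of local transfer entropies $\mathbb{T}_{Y\rightarrow X}^{(k,l),\Delta t}(\langle\,\cdot\,,i,\Delta t\rangle)$, each summand being governed solely by the regular conditional laws $\mathbb{P}_{X\mid \overleftarrow{X},\overleftarrow{Y},i,\Delta t}^{(\omega),(k,l)}$ and $\mathbb{P}_{X\mid \overleftarrow{X},i,\Delta t}^{(\omega),(k)}$ attached to its physical comb time and the windows of lengths $s$ and $r$ reaching back from it. Thus, for fixed $\Delta t,s,r$, the local contribution is a \emph{fixed function of physical grid time}, identical whether the comb point arises in $[t_0,t)$, in $[t,t+h)$, or in $[t_0,t+h)$. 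The comb points of $[t_0,t+h)$ therefore split, up to $O(\Delta t)$ anchor- and floor-offsets at the two splitting endpoints, into those of $[t_0,t)$ and those of $[t,t+h)$, so that the matching local summands cancel in the difference. Passing to the limit $\Delta t\downarrow 0$ via (\ref{Big_Fish}) yields the additivity identity, provided the convergence of the discrete sums to their respective EPTs can be secured on each of the three subintervals; this is where the boundedness hypothesis (\ref{bounder}) of Theorem \ref{main}, or the weaker condition of Corollary \ref{PPPusecase}, must be invoked, in addition to the finiteness of each EPT supplied by linearity.

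With additivity in hand the conclusion is immediate. Linearity gives
$$
{\mathcal{E}\mathcal{P}\mathcal{T}}_{Y\rightarrow X}^{(s,r)}\mid_{t}^{t+h}=c\,(t+h-t_0)-c\,(t-t_0)=c\,h,
$$
whence
$$
\mathbb{T}_{Y\rightarrow X}^{(s,r)}(t)=\lim_{h\downarrow 0}\frac{1}{h}\,{\mathcal{E}\mathcal{P}\mathcal{T}}_{Y\rightarrow X}^{(s,r)}\mid_{t}^{t+h}=\lim_{h\downarrow 0}\frac{c\,h}{h}=c=\frac{1}{T-t_0}\,{\mathcal{E}\mathcal{P}\mathcal{T}}_{Y\rightarrow X}^{(s,r)}\mid_{t_0}^{T},
$$
which is the asserted constant rate.

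I expect the additivity step to be the main obstacle, and its difficulty is twofold. First, one must justify interchanging the $\Delta t\downarrow 0$ limit with the splitting of the comb: the boundary contributions of size $O(\Delta t)$ must genuinely vanish, and the Riemann-type sums for all three intervals, taken over combs with \emph{different anchors and grid phases}, must converge to the same limiting integral, so a measure of regularity of the local-TE integrand in physical time is implicitly required. Second, one must confirm that the ``realized-history'' conditioning implicit in ${\mathcal{E}\mathcal{P}\mathcal{T}}_{Y\rightarrow X}^{(s,r)}\mid_{t}^{t+h}$ agrees with the conditioning produced by the factorized local kernels near the splitting time $t$; this holds because both are given by the same regular conditional probabilities, but it must be checked carefully precisely where the windows of lengths $s$ and $r$ straddle $t$.
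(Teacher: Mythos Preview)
Your route is substantially more elaborate than the paper's. The paper's proof is essentially two lines: since ${\mathcal{E}\mathcal{P}\mathcal{T}}_{Y\rightarrow X}^{(s,r)}\mid_{t_0}^{\cdot}$ is linear it is $\mathcal{C}^1$, and the paper simply asserts that linearity makes the rate $\mathbb{T}_{Y\rightarrow X}^{(s,r)}$ constant; then Lemma~\ref{easy1} gives
\[
{\mathcal{E}\mathcal{P}\mathcal{T}}_{Y\rightarrow X}^{(s,r)}\mid_{t_0}^{T}=\int_{t_0}^{T}\mathbb{T}_{Y\rightarrow X}^{(s,r)}(t')\,dt'=(T-t_0)\,\mathbb{T}_{Y\rightarrow X}^{(s,r)}(t),
\]
and one divides by $T-t_0$. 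No discrete-sum representation, no Theorem~\ref{main}, no comb-splitting argument is invoked at all.

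What you do differently is isolate and attempt to prove the additivity identity ${\mathcal{E}\mathcal{P}\mathcal{T}}_{Y\rightarrow X}^{(s,r)}\mid_{t}^{t+h}={\mathcal{E}\mathcal{P}\mathcal{T}}_{Y\rightarrow X}^{(s,r)}\mid_{t_0}^{t+h}-{\mathcal{E}\mathcal{P}\mathcal{T}}_{Y\rightarrow X}^{(s,r)}\mid_{t_0}^{t}$, which is exactly the statement that the rate (defined via ${\mathcal{E}\mathcal{P}\mathcal{T}}_{Y\rightarrow X}^{(s,r)}\mid_{t}^{t+h}$) coincides with the derivative of $u\mapsto{\mathcal{E}\mathcal{P}\mathcal{T}}_{Y\rightarrow X}^{(s,r)}\mid_{t_0}^{u}$. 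The paper (here and already in the proof of Lemma~\ref{easy1}) treats this identification as immediate and does not argue it; you are arguably being more scrupulous. The cost, however, is real: your additivity argument imports hypotheses---the boundedness condition (\ref{bounder}) of Theorem~\ref{main} or the weakening (\ref{PPP_help})---that Lemma~\ref{rateiscontant} does \emph{not} assume, and you yourself flag the comb-phase and endpoint issues as unresolved. So as a proof of the lemma as stated, your plan does not close; as a more honest accounting of what is needed, it is informative, but it is not the paper's argument.
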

\begin{proof}
  It is immediate that $\mathbb{T}_{Y \rightarrow X}^{(s,r)}$ is constant since ${\mathcal{E}\mathcal{P}\mathcal{T}}_{Y \rightarrow X}^{(s,r)}\mid_{t_{0}}^{\cdot}$ is linear, hence $ {\mathcal{E}\mathcal{P}\mathcal{T}}_{Y \rightarrow X}^{(s,r)}\mid_{t_{0}}^{\cdot} \in \mathcal{C}^{1}\left([t_{0},T]\right)$.  Furthermore, from Lemma \ref{easy1} we have \begin{equation*}
\begin{aligned}
{\mathcal{E}\mathcal{P}\mathcal{T}}_{Y \rightarrow X}^{(s,r)}\mid_{t_{0}}^{T}& = \int_{t_{0}}^{T}\mathbb{T}_{Y \rightarrow X}^{(s,r)}(t') dt'
\\& = \left( T-t_{0}\right)\mathbb{T}_{Y \rightarrow X}^{(s,r)}(t)
\end{aligned}
\end{equation*}
for any $t \in [t_{0},T)$ and the proof is complete.
\end{proof}

\section{Application to stationary processes}

\begin{definition}
Stochastic processes $X$ and $Y$ indexed over $\mathbb{T}$ are {\em conditionally stationary }if  $\forall \omega \in \Omega$  and $ k \geq 1$, all collections of times $\left\{t_{i}\right\}_{0\leq i \leq k}$ in $\mathbb{T}$ such that $t_{i} < t_{i+1}$ for each $i$, and all $A\in \mathcal{X}$,
\begin{equation}
\begin{aligned}
 &\mathbb{P}\left( X_{t_{i+1}} \in A| X_{t_{i}},\ldots X_{t_{i-k}}, Y_{t_{i}}, \ldots Y_{t_{i-k}}\right)(\omega)= \\& \mathbb{P}\left( X_{t_{i+1}+\tau}\in A | X_{t_{i}+\tau},\ldots X_{t_{i-k}+\tau},Y_{t_{i}+\tau}, \ldots, Y_{t_{i-k}+\tau}\right)(\omega)
\end{aligned}
\end{equation}
for all $i \in [k-1]$ and $ \tau >0$.
\end{definition}

\begin{definition}
Suppose $k$ and $l$ are positive integers.  Stochastic processes $X$ and $Y$ on $\mathbb{T}$ are {\em $(k,l)$-order conditionally stationary processes} if $\forall \omega \in \Omega$, all collections of times $\left\{t_{i}\right\}_{0\leq i \leq \max{(k,l)}}$ of $\mathbb{T}$ such that $t_{i} < t_{i+1}$ for each $i$, and all $A\in \mathcal{X}$,
\begin{equation} \label{cond_stat_1}
\begin{aligned}
 &\mathbb{P}\left( X_{t_{i+1}} \in A| X_{t_{i}},\ldots X_{t_{i-k}}, Y_{t_{i}}, \ldots Y_{t_{i-l}}\right)(\omega)= \\&= \mathbb{P}\left( X_{t_{i+1}+\tau}\in A | X_{t_{i}+\tau},\ldots X_{t_{i-k}+\tau},Y_{t_{i}+\tau}, \ldots, Y_{t_{i-l}+\tau}\right)(\omega)
\end{aligned}
\end{equation}
for all $i \in [\max{(k,l)}-1]$ and $ \tau >0$.
 \end{definition}
Observe that if $X$ and $Y$ are conditionally stationary processes, then they are by definition $(k,l)$-order conditionally stationary for all $k,l\geq 1.$  Moreover, if $X$ and $Y$ are stationary, then $\forall \Delta t >0$ and $s,r>0$ such that $[t_{0}-\max(s,r),T) \subset \mathbb{T}$, we have that $X$ and $Y$ are also $\left(\lfloor \frac{s}{\Delta t}\rfloor,\lfloor \frac{r}{\Delta t}\rfloor\right)$-order conditionally stationary.  We exploit this stationarity in the following observation. 
\begin{Observation}\label{stat_obs}
If $X$ and $Y$ are stationary processes, then for any $\Delta t>0$ and $j = 0,\cdots, \tau-1$ we have that
 \begin{equation}\label{stat_TE}
 \begin{aligned}
\sum_{i = 0}^{\tau-1} \mathbb{T}_{Y \rightarrow X}^{(k,l),\Delta t}\left(  \left\langle T,i,\Delta t \right\rangle\right) 
& = \mathbb{E}_{\mathbb{P}}\left[KL\left( \prod_{i = 0}^{\tau-1}\mathbb{P}_{X\mid \overleftarrow{X},\overleftarrow{Y},i,\Delta t}^{(\omega),(\left\lfloor \frac{s}{\Delta t}\right\rfloor,\left\lfloor \frac{r}{\Delta t}\right\rfloor)}  \Bigg| \Bigg| \prod_{i = 0}^{\tau-1}\mathbb{P}_{X\mid \overleftarrow{X},i,\Delta t}^{(\omega),(\lfloor \frac{s}{\Delta t}\rfloor)} \right)\right]\\
& = \tau\mathbb{E}_{\mathbb{P}}\left[KL\left( \mathbb{P}_{X\mid \overleftarrow{X},\overleftarrow{Y},j,\Delta t}^{(\omega),(\lfloor \frac{s}{\Delta t}\rfloor,\lfloor \frac{r}{\Delta t}\rfloor)} \Bigg|\Bigg| \mathbb{P}_{X\mid \overleftarrow{X},j,\Delta t}^{(\omega),(\lfloor \frac{s}{\Delta t}\rfloor)}\right)\right]
\\& = \tau\mathbb{T}_{Y \rightarrow X}^{\left(\left\lfloor \frac{s}{\Delta t}\right\rfloor,\left\lfloor \frac{r}{\Delta t}\right\rfloor\right), \Delta t}\left(  \left\langle T,j,\Delta t \right\rangle\right)
 \end{aligned}
 \end{equation}
where in the second to last equality we used that 
$$
\frac{d\left( c \mathbb{P}_{X\mid \overleftarrow{X},\overleftarrow{Y},j,\Delta t}^{(\omega),(\lfloor \frac{s}{\Delta t}\rfloor,\lfloor \frac{r}{\Delta t}\rfloor)}\right)}{d \left(c \mathbb{P}_{X\mid \overleftarrow{X},j,\Delta t}^{(\omega),(\lfloor \frac{s}{\Delta t}\rfloor)}\right)} = \frac{d \mathbb{P}_{X\mid \overleftarrow{X},\overleftarrow{Y},j,\Delta t}^{(\omega),(\lfloor \frac{s}{\Delta t}\rfloor,\lfloor \frac{r}{\Delta t}\rfloor)}}{d\mathbb{P}_{X\mid \overleftarrow{X},j,\Delta t}^{(\omega),(\lfloor \frac{s}{\Delta t}\rfloor)}},\, \mathbb{P}_{X\mid \overleftarrow{X},j,\Delta t}^{(\omega),(\lfloor \frac{s}{\Delta t}\rfloor)}-\textrm{a.s.}
$$ 
for any $c \neq 0$ due to the a.s.~uniqueness of the RN-derivative.
  \end{Observation}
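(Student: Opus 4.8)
The plan is to prove the two nontrivial equalities in (\ref{stat_TE}) in turn; the last one is then immediate from notation. Throughout I write $P_i = \mathbb{P}_{X\mid \overleftarrow{X},\overleftarrow{Y},i,\Delta t}^{(\omega),(k,l)}$ and $M_i = \mathbb{P}_{X\mid \overleftarrow{X},i,\Delta t}^{(\omega),(k)}$, so that $P_{\Delta t}^{(\omega)}=\prod_{i=0}^{\tau-1}P_i$ and $M_{\Delta t}^{(\omega)}=\prod_{i=0}^{\tau-1}M_i$. The first equality is a tensorization identity and does not use stationarity: provided each $P_i\ll M_i$ (so the divergences in (\ref{stat_TE}) are meaningful), Lemma \ref{fixedR-N} factors the Radon-Nikodym derivative of the product as $\prod_{i}\frac{dP_i}{dM_i}\circ\pi_i$, whence taking logarithms turns the product into a sum. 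The Fubini computation already carried out in the proof of Theorem \ref{main} (with the constants $S_{i,\Delta t}=1$, since each $P_i$ is a probability measure) then collapses the path integral to $KL\bigl(\prod_i P_i\,\|\,\prod_i M_i\bigr)=\sum_{i=0}^{\tau-1}KL(P_i\,\|\,M_i)$ for each $\omega$. Integrating against $\mathbb{P}$ and using linearity of expectation over the finite sum yields $\mathbb{E}_{\mathbb{P}}\bigl[KL(P_{\Delta t}^{(\cdot)}\,\|\,M_{\Delta t}^{(\cdot)})\bigr]=\sum_{i=0}^{\tau-1}\mathbb{E}_{\mathbb{P}}\bigl[KL(P_i\,\|\,M_i)\bigr]=\sum_{i=0}^{\tau-1}\mathbb{T}_{Y\rightarrow X}^{(k,l),\Delta t}(\langle T,i,\Delta t\rangle)$ by the definition of the summands, which is the first equality.

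For the second (main) equality I would show that each summand $\mathbb{E}_{\mathbb{P}}\bigl[KL(P_i\,\|\,M_i)\bigr]$ is independent of $i$. Since $X$ and $Y$ are stationary they are $(\lfloor s/\Delta t\rfloor,\lfloor r/\Delta t\rfloor)$-order conditionally stationary, as noted immediately before the Observation, so for each fixed realization of the conditioning variables the one-step conditional law at index $i$ coincides, after the time shift by $(i-j)\Delta t$, with the one-step conditional law at index $j$; consequently the integrand $KL(P_i\,\|\,M_i)$, viewed as a function of the history values, is the same function for every index. The scale invariance $\frac{d(cP_j)}{d(cM_j)}=\frac{dP_j}{dM_j}$ displayed after (\ref{stat_TE}) is precisely what lets conditional stationarity be applied to this KL integrand rather than to the measures themselves: it guarantees that the common normalization carried by the regular conditional probabilities plays no role in the Radon-Nikodym derivative, so the integrand depends only on the conditional \emph{laws}. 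Stationarity of the joint process then makes the law of the conditioning block shift-invariant, so the $\mathbb{P}$-expectation of this common integrand is the same for all $i$; summing $\tau$ identical terms gives $\tau\,\mathbb{E}_{\mathbb{P}}\bigl[KL(P_j\,\|\,M_j)\bigr]$. The final equality is then immediate, since with $k=\lfloor s/\Delta t\rfloor$ and $l=\lfloor r/\Delta t\rfloor$ the quantity $\mathbb{E}_{\mathbb{P}}\bigl[KL(P_j\,\|\,M_j)\bigr]$ is by the governing Notation exactly $\mathbb{T}_{Y\rightarrow X}^{(\lfloor s/\Delta t\rfloor,\lfloor r/\Delta t\rfloor),\Delta t}(\langle T,j,\Delta t\rangle)$.

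The main obstacle is the second equality, namely making rigorous that the expected single-step divergence does not depend on the index. The delicate point is that the regular conditional probabilities carry an implicit normalization (the probability of the conditioning event) that genuinely varies with $i$ and $\omega$, so conditional stationarity cannot be applied to $P_i$ and $M_i$ directly; one must first invoke the scale invariance of the Radon-Nikodym derivative to reduce the KL integrand to a functional of the conditional laws alone, and only then transport the identity across indices by the shift and integrate using shift-invariance of the finite-dimensional distributions. Once this reduction is in place, the remaining manipulations are routine.
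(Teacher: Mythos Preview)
Your proposal is correct and follows essentially the same route as the paper: the first equality is exactly the identity (\ref{exp_is_sum}) established in the proof of Theorem~\ref{main} via Lemma~\ref{fixedR-N} and Fubini, and the second follows by using stationarity to make every summand $\mathbb{E}_{\mathbb{P}}[KL(P_i\|M_i)]$ equal, just as you describe.

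One small remark on your reading of the scale-invariance line: regular conditional probabilities are already probability measures for each $\omega$, so they do not literally carry the probability of the conditioning event as a multiplicative factor. Thus your ``delicate point'' about stripping off a varying normalization is not quite the issue; the KL integrand already depends only on the conditional laws. What stationarity gives you directly is that the \emph{law under $\mathbb{P}$} of the random variable $\omega\mapsto KL(P_i(\omega)\|M_i(\omega))$ is shift-invariant in $i$ (since it is a measurable functional of a block of $(X,Y)$ whose joint distribution is shift-invariant), hence the expectations coincide. The paper's displayed identity $\frac{d(cP_j)}{d(cM_j)}=\frac{dP_j}{dM_j}$ is best read as a side observation ensuring the RN-derivative is insensitive to any common rescaling one might introduce when passing between indices, rather than as an essential step. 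Either way, your argument goes through.
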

  
 We can use Observation \ref{stat_obs} to provide an expression for the transfer entropy rate for stationary processes that have $\left(s,r \right)$-consistency on subintervals of $[t_{0}, T)$ of the form $[t_{0},t)$.  It should be noted that a result similar to the statement in part 2 of the following corollary appears as a remark in \cite{spl} without proof.
 \begin{corollary}\label{stationary_monster}
Suppose $\mathbb{T}$ is a closed and bounded interval, $[t_{0},T)\subset \mathbb{T}$, and $r,s>0$ satisfy $\left( t_{0} - \max{(s,r)}, T\right)\subset \mathbb{T}$.  Suppose further that $X$ and $Y$ are stationary processes such that 
\begin{itemize}
    \item[a.]$Y$ is $(s,r)$-consistent upon $X$ on $[t_{0}, t), \forall t \in (t_{0}, T].$
    \item[b.] For all $\forall t \in (t_0, T]$, $\exists M, \delta_{2}>0\text{ such that }\forall \Delta t \in (0, \delta_{2}),$
    $$
    KL\left( \prod_{i = 0}^{\left\lfloor \frac{t}{\Delta t}\right \rfloor - \left\lfloor \frac{t_{0}}{\Delta t}\right \rfloor-1} \mathbb{P}_{X| \overleftarrow{X}, \overleftarrow{Y},i,\Delta t}^{(\omega),(k,l)}\middle|\middle| \prod_{i = 0}^{\left\lfloor \frac{t}{\Delta t}\right \rfloor - \left\lfloor \frac{t_{0}}{\Delta t}\right \rfloor-1} \mathbb{P}_{X| \overleftarrow{X}, i,\Delta t}^{(\omega),(k)}\right)\leq M, \mathbb{P}-\textrm{a.s.}
    $$
\end{itemize}
where $k = \left\lfloor \frac{s}{\Delta t}\right\rfloor$ and $l = \left\lfloor \frac{r}{\Delta t}\right\rfloor$.
 \begin{itemize}
 \item[1.] If $\forall t \in (t_{0}, T]$, $ \lim_{\Delta t \downarrow 0}\frac{1}{\Delta t} \mathbb{T}_{Y \rightarrow X}^{(k,l), \Delta t}\left(\Delta t \left \lfloor \frac{t_{1}}{\Delta t}\right\rfloor\right)$ exists $\forall t_{1}\in [t_{0},t),$ then 
 $$
 \lim_{\Delta t \downarrow 0}\frac{1}{\Delta t} \mathbb{T}_{Y \rightarrow X}^{(k,l), \Delta t}\left( \Delta t \left \lfloor \frac{t_{1}}{\Delta t}\right\rfloor\right) = \frac{{\mathcal{E}\mathcal{P}\mathcal{T}}_{Y \rightarrow X}^{(s,r)}\mid_{t_{0}}^{t_{1}}}{t_{1} - t_{0}}
 $$
 for all $ t_{1}\in (t_{0},t)$.
 \item[2.]$\mathbb{T}_{Y \rightarrow X}^{(s,r)}(t) = \frac{1}{T-t_{0}}{\mathcal{E}\mathcal{P}\mathcal{T}}_{Y \rightarrow X}^{(s,r)}\mid_{t_{0}}^{T}.$
 \end{itemize}
\end{corollary}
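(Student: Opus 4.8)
The plan is to prove both assertions by transferring, for every $t_{1}\in(t_{0},T]$, the equivalence of Theorem \ref{main} onto the subinterval $[t_{0},t_{1})$ and then collapsing the resulting sum via stationarity using Observation \ref{stat_obs}. Fix $t_{1}\in(t_{0},T]$ and set $\tau_{1}=\left\lfloor\frac{t_{1}}{\Delta t}\right\rfloor-\left\lfloor\frac{t_{0}}{\Delta t}\right\rfloor$, so that $\tau_{1}\Delta t\to t_{1}-t_{0}$ as $\Delta t\downarrow0$. On this interval, consistency condition (a) supplies the $(s,r)$-consistency hypothesis and condition (b) supplies exactly the uniform bound (\ref{bounder}) of Theorem \ref{main}, since the product appearing in (b) ranges over $i=0,\dots,\tau_{1}-1$. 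The shared first step for both parts is to establish ${\mathcal{E}\mathcal{P}\mathcal{T}}_{Y \rightarrow X}^{(s,r)}\mid_{t_0}^{t_{1}}<\infty$.

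For finiteness I would reuse the computation in Lemma \ref{KL_conv}. Writing $P^{(\omega)},M^{(\omega)}$ for the path measures on $\mathcal{F}_{X}^{[t_{0},t_{1})}$ and $P_{\Delta t}^{(\omega)},M_{\Delta t}^{(\omega)}$ for their restrictions, the backward-martingale convergence that produces (\ref{rad_lim}) gives $\frac{dP_{\Delta t}^{(\omega)}}{dM_{\Delta t}^{(\omega)}}\to\frac{dP^{(\omega)}}{dM^{(\omega)}}$, $M^{(\omega)}$-a.s., with no finiteness assumption. Since $x\mapsto x\log x$ is bounded below, Fatou's lemma then yields $KL(P^{(\omega)}\|M^{(\omega)})\le\liminf_{\Delta t\downarrow0}KL(P_{\Delta t}^{(\omega)}\|M_{\Delta t}^{(\omega)})\le M$ for $\mathbb{P}$-a.e.\ $\omega$ by (b), whence ${\mathcal{E}\mathcal{P}\mathcal{T}}_{Y \rightarrow X}^{(s,r)}\mid_{t_0}^{t_{1}}=\mathbb{E}_{\mathbb{P}}[KL(P^{(\cdot)}\|M^{(\cdot)})]\le M<\infty$. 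With finiteness in hand, Theorem \ref{main} applied to $[t_{0},t_{1})$ gives ${\mathcal{E}\mathcal{P}\mathcal{T}}_{Y \rightarrow X}^{(s,r)}\mid_{t_0}^{t_{1}}=\lim_{\Delta t\downarrow0}\sum_{i=0}^{\tau_{1}-1}\mathbb{T}_{Y \rightarrow X}^{(k,l),\Delta t}(\left\langle t_{1},i,\Delta t\right\rangle)$, while Observation \ref{stat_obs} collapses the sum to $\tau_{1}\,a_{\Delta t}$, where $a_{\Delta t}$ denotes $\mathbb{T}_{Y \rightarrow X}^{(k,l),\Delta t}$ evaluated at a single comb point. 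By stationarity $a_{\Delta t}$ is independent of which comb point is chosen, and in particular independent of $t_{1}$.

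For part 1, the hypothesis furnishes $R:=\lim_{\Delta t\downarrow0}\frac{1}{\Delta t}\mathbb{T}_{Y \rightarrow X}^{(k,l),\Delta t}(\Delta t\lfloor t_{1}/\Delta t\rfloor)$, which by stationarity equals $\lim_{\Delta t\downarrow0}a_{\Delta t}/\Delta t$. Factoring, $\lim_{\Delta t\downarrow0}\tau_{1}a_{\Delta t}=\lim_{\Delta t\downarrow0}(\tau_{1}\Delta t)\frac{a_{\Delta t}}{\Delta t}=(t_{1}-t_{0})R$, and comparing with the displayed expression for the EPT gives ${\mathcal{E}\mathcal{P}\mathcal{T}}_{Y \rightarrow X}^{(s,r)}\mid_{t_0}^{t_{1}}=(t_{1}-t_{0})R$, i.e.\ $R=\frac{{\mathcal{E}\mathcal{P}\mathcal{T}}_{Y \rightarrow X}^{(s,r)}\mid_{t_0}^{t_{1}}}{t_{1}-t_{0}}$ for every $t_{1}\in(t_{0},t)$, as claimed.

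For part 2 I would not presuppose the rate but instead derive linearity of $t_{1}\mapsto{\mathcal{E}\mathcal{P}\mathcal{T}}_{Y \rightarrow X}^{(s,r)}\mid_{t_0}^{t_{1}}$ directly. Because $\lim_{\Delta t\downarrow0}\tau_{1}a_{\Delta t}={\mathcal{E}\mathcal{P}\mathcal{T}}_{Y \rightarrow X}^{(s,r)}\mid_{t_0}^{t_{1}}$ is finite and $\tau_{1}\Delta t\to t_{1}-t_{0}>0$, the ratio $a_{\Delta t}/\Delta t=(\tau_{1}a_{\Delta t})/(\tau_{1}\Delta t)$ converges to $c:=\frac{{\mathcal{E}\mathcal{P}\mathcal{T}}_{Y \rightarrow X}^{(s,r)}\mid_{t_0}^{t_{1}}}{t_{1}-t_{0}}$; since $a_{\Delta t}$ does not depend on $t_{1}$, the number $c$ is independent of $t_{1}$, forcing ${\mathcal{E}\mathcal{P}\mathcal{T}}_{Y \rightarrow X}^{(s,r)}\mid_{t_0}^{t_{1}}=c\,(t_{1}-t_{0})$ for all $t_{1}\in(t_{0},T]$. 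Thus $t\mapsto{\mathcal{E}\mathcal{P}\mathcal{T}}_{Y \rightarrow X}^{(s,r)}\mid_{t_0}^{t}$ is linear on $[t_{0},T]$, and since condition (a) with $t=T$ gives $(s,r)$-consistency on $[t_{0},T)$, Lemma \ref{rateiscontant} applies verbatim and yields $\mathbb{T}_{Y \rightarrow X}^{(s,r)}(t)=\frac{1}{T-t_{0}}{\mathcal{E}\mathcal{P}\mathcal{T}}_{Y \rightarrow X}^{(s,r)}\mid_{t_0}^{T}$. The main obstacle is the finiteness step: one must pass from the pointwise discrete-level bound (b) to finiteness of the limiting pathwise divergence, and this is precisely where the unconditional a.s.\ convergence of Radon--Nikodym derivatives from Lemma \ref{KL_conv} together with Fatou's lemma does the work; everything afterward is bookkeeping with the floor functions and the translation invariance of $a_{\Delta t}$.
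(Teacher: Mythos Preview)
Your proposal is correct and follows essentially the same route as the paper: apply Theorem~\ref{main} on each subinterval $[t_{0},t_{1})$, collapse the sum via Observation~\ref{stat_obs} to $\tau_{1}\,a_{\Delta t}$, handle the floor-function discrepancy $\tau_{1}\Delta t\to t_{1}-t_{0}$, and for part~2 deduce linearity of $t_{1}\mapsto{\mathcal{E}\mathcal{P}\mathcal{T}}_{Y \rightarrow X}^{(s,r)}\mid_{t_{0}}^{t_{1}}$ before invoking Lemma~\ref{rateiscontant}. The paper argues part~2 by fixing two points $t_{1}>t_{2}$ and directly computing the ratio ${\mathcal{E}\mathcal{P}\mathcal{T}}\mid_{t_{0}}^{t_{1}}/{\mathcal{E}\mathcal{P}\mathcal{T}}\mid_{t_{0}}^{t_{2}}=(t_{1}-t_{0})/(t_{2}-t_{0})$; your version extracts the common limit $c=\lim a_{\Delta t}/\Delta t$ and observes it is independent of $t_{1}$, which is the same idea expressed more compactly.

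One point where your write-up is actually more careful than the paper: the paper simply asserts ``From Theorem~\ref{main} \ldots\ we have $\infty>{\mathcal{E}\mathcal{P}\mathcal{T}}\mid_{t_{0}}^{t_{1}}=\lim\ldots$'' without isolating why finiteness holds, yet Theorem~\ref{main} requires finiteness of the EPT as a hypothesis for the forward implication. Your Fatou argument---using the unconditional $M^{(\omega)}$-a.s.\ convergence of $\zeta_{\Delta t}$ established in the proof of Lemma~\ref{KL_conv}, lower-boundedness of $x\log x$, and the pointwise bound (b)---cleanly supplies $KL(P^{(\omega)}\|M^{(\omega)})\le M$ and hence ${\mathcal{E}\mathcal{P}\mathcal{T}}\mid_{t_{0}}^{t_{1}}\le M$, closing that gap.
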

\begin{proof}
\textit{(Proof of 1.)} 
Suppose $t\in (t_0,T]$ and $t_{1}\in (t_{0},t)$.  Per assumption $ \lim_{\Delta t \downarrow 0} \mathbb{T}_{Y \rightarrow X}^{(k,l), \Delta t}\left(\Delta t \left \lfloor \frac{t_{1}}{\Delta t}\right\rfloor\right)/\Delta t$ exists, thus we have that
\begin{equation}\label{lim_prod}
\lim_{\Delta t \downarrow 0} \mathbb{T}_{Y \rightarrow X}^{(k,l), \Delta t}\left(\Delta t \left \lfloor \frac{t_{1}}{\Delta t}\right\rfloor\right) = \left( \lim_{\Delta t \downarrow 0}\Delta t\right)\left(  \lim_{\Delta t \downarrow 0}\frac{1}{\Delta t} \mathbb{T}_{Y \rightarrow X}^{(k,l), \Delta t}\left(\Delta t \left \lfloor \frac{t_{1}}{\Delta t}\right\rfloor\right)\right) = 0.
\end{equation}
From  Theorem \ref{main} and (\ref{stat_TE}) we have that 
\begin{equation}\label{t_1_one}
\begin{aligned}
\infty & > {\mathcal{E}\mathcal{P}\mathcal{T}}_{Y \rightarrow X}^{(s,r)}\mid_{t_{0}}^{t_{1}} \\ 
&= \lim_{\Delta t \downarrow 0} \sum_{i = 0}^{\lfloor\frac{t_{1}}{\Delta t}\rfloor - \lfloor\frac{t_{0}}{\Delta t}\rfloor-1}  \mathbb{T}_{Y \rightarrow X}^{(k,l),\Delta t}\left(   \Delta t \left \lfloor \frac{t_{1}}{\Delta t}\right\rfloor-i \Delta t\right)\\
& = \lim_{\Delta t \downarrow 0}\left( \left \lfloor\frac{t_{1}}{\Delta t}\right\rfloor- \left\lfloor \frac{t_{0}}{\Delta t}\right\rfloor \right)\mathbb{T}_{Y \rightarrow X}^{(k,l),\Delta t}\left(   \Delta t \left \lfloor \frac{t_{1}}{\Delta t}\right\rfloor-j \Delta t\right)
\end{aligned}
\end{equation}
for any $j = 0,\cdots, \left\lfloor \frac{t_{1}}{\Delta t}\right\rfloor- \left\lfloor\frac{t_{0}}{\Delta t}\right\rfloor-1$.
Note that for each $\Delta t >0$,  $\exists C_{\Delta t} \in (-2,2)$ such that $$\left \lfloor\frac{t_{1}}{\Delta t}\right\rfloor- \left\lfloor \frac{t_{0}}{\Delta t}\right\rfloor  =  \frac{t_{1} - t_{0}}{\Delta t} + C_{\Delta t}. $$  Letting $j=0$ in (\ref{t_1_one}) we get that
\begin{equation}
\begin{aligned}
& \lim_{\Delta t \downarrow 0}\left( \left\lfloor\frac{t_{1}}{\Delta t}\right\rfloor- \left\lfloor\frac{t_{0}}{\Delta t}\right \rfloor \right)\mathbb{T}_{Y \rightarrow X}^{(k,l),\Delta t}\left( \Delta t \left \lfloor \frac{t_{1}}{\Delta t}\right\rfloor \right) \\
 &= \lim_{\Delta t \downarrow 0}\left( \frac{t_{1} - t_{0}}{\Delta t} +C_{\Delta t} \right)\mathbb{T}_{Y \rightarrow X}^{(k,l),\Delta t}\left( \Delta t \left \lfloor \frac{t_{1}}{\Delta t}\right\rfloor \right)
 \\
 & = (t_{1}-t_{0})\lim_{\Delta t \downarrow 0}\frac{1}{\Delta t}\mathbb{T}_{Y \rightarrow X}^{(k,l),\Delta t}\left( \Delta t \left \lfloor \frac{t_{1}}{\Delta t} \right\rfloor \right) +  \lim_{\Delta t \downarrow 0} C_{\Delta t}\mathbb{T}_{Y \rightarrow X}^{(k,l),\Delta t}\left(\Delta t \left \lfloor \frac{t_{1}}{\Delta t}\right\rfloor \right).
\end{aligned}
\end{equation}
 Since $C_{\Delta t}$ is bounded, $ \lim_{\Delta t \downarrow 0} C_{\Delta t}\mathbb{T}_{Y \rightarrow X}^{(k,l),\Delta t}\left(\Delta t \left \lfloor \frac{t_{1}}{\Delta t}\right\rfloor \right) = 0.$ Now using (\ref{t_1_one}) we get 
 \begin{equation*}
 \begin{aligned}
 (t_{1} - t_{0}) \lim_{\Delta t \downarrow 0}\frac{1}{\Delta t} \mathbb{T}_{Y \rightarrow X}^{(k,l), \Delta t}\left( \Delta t \left \lfloor \frac{t_{1}}{\Delta t}\right\rfloor\right)  =  {\mathcal{E}\mathcal{P}\mathcal{T}}_{Y \rightarrow X}^{(s,r)}\mid_{t_{0}}^{t_{1}}
 \end{aligned}
 \end{equation*}
 and the result follows from division by $t_{1}-t_{0}$.

\textit{(Proof of 2.)}
Suppose $t_{1}, t_{2}$ are distinct elements of $[t_{0}, T]$.  Without loss of generality, suppose $t_{1} > t_{2}\neq t_0$.  Per assumption $X$ and $Y$ are stationary processes such that $Y$ is $(s,r)$-consistent upon $X$ on $[t_{0}, t_{1})$ and $[t_{0}, t_{2})$.  If $j'  = \left\lfloor\frac{t_{1}}{\Delta t}\right\rfloor- \left\lfloor\frac{t_{2}}{\Delta t}\right \rfloor,$ then from (\ref{stat_TE}) we have that
\begin{align*}
& {\mathcal{E}\mathcal{P}\mathcal{T}}_{Y \rightarrow X}^{(s,r)}\mid_{t_{0}}^{t_{1}}\\
& =  \lim_{\Delta t \downarrow 0}\left( \left\lfloor\frac{t_{1}}{\Delta t}\right\rfloor- \left\lfloor\frac{t_{0}}{\Delta t}\right \rfloor \right)\mathbb{T}_{Y \rightarrow X}^{(k,l),\Delta t}\left( \Delta t \left \lfloor \frac{t_{1}}{\Delta t} \right\rfloor - j'\Delta t \right) \\
 &= \lim_{\Delta t \downarrow 0}\left( \frac{t_{1} - t_{0}}{\Delta t} +C_{\Delta t} \right)\mathbb{T}_{Y \rightarrow X}^{(k,l),\Delta t}\left( \Delta t \left \lfloor \frac{t_{2}}{\Delta t}\right\rfloor \right)
 \\& = \lim_{\Delta t \downarrow 0}\frac{t_{1}-t_{0}}{t_{2} - t_{0}}\left( \frac{t_{1} - t_{0} + \Delta t C_{\Delta t}}{(t_{1} - t_{0})\Delta t}\right)(t_{2} - t_{0})\mathbb{T}_{Y \rightarrow X}^{(k,l),\Delta t}\left( \Delta t \left \lfloor \frac{t_{2}}{\Delta t}\right\rfloor \right) 
 \\& =  \frac{t_{1}-t_{0}}{t_{2} - t_{0}}\lim_{\Delta t \downarrow 0}\left(\frac{ \Delta t C_{\Delta t}}{(t_{1} - t_{0})}\right)\left(\left \lfloor \frac{t_{2}}{\Delta t}\right\rfloor  - \left \lfloor \frac{t_{0}}{\Delta t}\right\rfloor - K_{\Delta t}\right)\mathbb{T}_{Y \rightarrow X}^{(k,l),\Delta t}\left( \Delta t \left \lfloor \frac{t_{2}}{\Delta t}\right\rfloor \right) \\
 & + \frac{t_{1}-t_{0}}{t_{2} - t_{0}}\lim_{\Delta t \downarrow 0}\left(\left \lfloor \frac{t_{2}}{\Delta t}\right\rfloor  - \left \lfloor \frac{t_{0}}{\Delta t}\right\rfloor - K_{\Delta t}\right)\mathbb{T}_{Y \rightarrow X}^{(k,l),\Delta t}\left( \Delta t \left \lfloor \frac{t_{2}}{\Delta t}\right\rfloor \right).
 \end{align*}
 Per assumption, $ \lim_{\Delta t\downarrow 0}\left(\left \lfloor \frac{t_{2}}{\Delta t}\right\rfloor  - \left \lfloor \frac{t_{0}}{\Delta t}\right\rfloor\right)\mathbb{T}_{Y \rightarrow X}^{(k,l),\Delta t}\left( \Delta t \left \lfloor \frac{t_{2}}{\Delta t}\right\rfloor \right)$ exists and since both $C_{\Delta t}$ and $K_{\Delta t}$ are bounded we have 
 $$
 \frac{t_{1}-t_{0}}{t_{2} - t_{0}}\lim_{\Delta t \downarrow 0}\left(\frac{ \Delta t C_{\Delta t}}{(t_{1} - t_{0})}\right)\left(\left \lfloor \frac{t_{2}}{\Delta t}\right\rfloor  - \left \lfloor \frac{t_{0}}{\Delta t}\right\rfloor \right)\mathbb{T}_{Y \rightarrow X}^{(k,l),\Delta t}\left( \Delta t \left \lfloor \frac{t_{2}}{\Delta t}\right\rfloor \right) = 0
 $$
and 
$$
\frac{t_{1}-t_{0}}{t_{2} - t_{0}}\lim_{\Delta t \downarrow 0}\left(\frac{ \Delta t C_{\Delta t}}{(t_{1} - t_{0})}\right)K_{\Delta t}\mathbb{T}_{Y \rightarrow X}^{(k,l),\Delta t}\left( \Delta t \left \lfloor \frac{t_{2}}{\Delta t}\right\rfloor \right) = 0. 
$$
Moreover,
\begin{equation*}
\begin{aligned}
& {\mathcal{E}\mathcal{P}\mathcal{T}}_{Y \downarrow X}^{(s,r)}\mid_{t_{0}}^{t_{1}}\\
& =\frac{t_{1}-t_{0}}{t_{2} - t_{0}}\lim_{\Delta t \downarrow 0}\left(\frac{ \Delta t C_{\Delta t}}{(t_{1} - t_{0})}\right)\left(\left \lfloor \frac{t_{2}}{\Delta t}\right\rfloor  - \left \lfloor \frac{t_{0}}{\Delta t}\right\rfloor - K_{\Delta t}\right)\mathbb{T}_{Y \rightarrow X}^{(k,l),\Delta t}\left( \Delta t \left \lfloor \frac{t_{2}}{\Delta t}\right\rfloor \right) \\
&+ \frac{t_{1}-t_{0}}{t_{2} - t_{0}}\lim_{\Delta t \downarrow 0}\left(\left \lfloor \frac{t_{2}}{\Delta t}\right\rfloor  - \left \lfloor \frac{t_{0}}{\Delta t}\right\rfloor - K_{\Delta t}\right)\mathbb{T}_{Y \rightarrow X}^{(k,l),\Delta t}\left( \Delta t \left \lfloor \frac{t_{2}}{\Delta t}\right\rfloor \right)\\
& =  \frac{t_{1}-t_{0}}{t_{2} - t_{0}}\lim_{\Delta t \downarrow 0}\left(\left \lfloor \frac{t_{2}}{\Delta t}\right\rfloor  - \left \lfloor \frac{t_{0}}{\Delta t}\right\rfloor - K_{\Delta t}\right)\mathbb{T}_{Y \rightarrow X}^{(k,l),\Delta t}\left( \Delta t \left \lfloor \frac{t_{2}}{\Delta t}\right\rfloor \right)\\
\end{aligned}
\end{equation*}
and since $\frac{t_{1}-t_{0}}{t_{2} - t_{0}}\lim_{\Delta t \downarrow 0} K_{\Delta t}\mathbb{T}_{Y \rightarrow X}^{(k,l),\Delta t}\left( \Delta t \left \lfloor \frac{t_{2}}{\Delta t}\right\rfloor \right) = 0$, we have 
\begin{equation*}
\begin{aligned}
&{\mathcal{E}\mathcal{P}\mathcal{T}}_{Y \rightarrow X}^{(s,r)}\mid_{t_{0}}^{t_{1}} = \frac{t_{1}-t_{0}}{t_{2} - t_{0}}\lim_{\Delta t \downarrow 0}\left(\left \lfloor \frac{t_{2}}{\Delta t}\right\rfloor  - \left \lfloor \frac{t_{0}}{\Delta t}\right\rfloor\right)\mathbb{T}_{Y \rightarrow X}^{(k,l),\Delta t}\left( \Delta t \left \lfloor \frac{t_{2}}{\Delta t}\right\rfloor \right)
\\& \implies {\mathcal{E}\mathcal{P}\mathcal{T}}_{Y \rightarrow X}^{(s,r)}\mid_{t_{0}}^{t_{2}} = \frac{t_{2} - t_{0}}{t_{1}-t_{0}}{\mathcal{E}\mathcal{P}\mathcal{T}}_{Y \rightarrow X}^{(s,r)}\mid_{t_{0}}^{t_{1}}.
\end{aligned}
\end{equation*}
Thus, ${\mathcal{E}\mathcal{P}\mathcal{T}}_{Y \rightarrow X}^{(s,r)}\mid_{t_{0}}^{t}$ is linear in $t -t_{0}$ and the result follows immediately from Lemma \ref{rateiscontant}. 
\end{proof}
Simply put, Corollary \ref{stationary_monster} states that under stationarity in a rather strict sense, the TE rate is the average value of the expected pathwise transfer entropy.

\section{Jump Processes}\label{cadlag_sec}

In this section we consider EPT between jump  processes, i.e., processes whose sample paths, with probability one, are step functions.  These processes are ubiquitous in the literature concerning the application of TE to neural spike trains, social media sentiment analysis, and similar fields.  Examples of such processes are L\'{e}vy processes and Poisson processes.  Furthermore, we define conditional escape and transition rates similar to those in \cite{spl} as follows.
\begin{definition}
For jump processes $X=\left(X_{t}\right)_{t\in[t_{0},T)}$ and  $Y=\left(Y_{t}\right)_{t\in[t_{0},T)}$ with $\Sigma$ countable, define for each $\omega \in \Omega, t \in [t_{0},T); r,s>0$, and $x'\in\Sigma$ the {\em conditional transition rate of} $X$ {\em given} $X$ {\em and} $Y$ {\em of} $x'$ {\em at} $t$,  denoted $\psi\left[ x' \middle| \overleftarrow{X},\overleftarrow{Y}\right](t,\omega)$, by
\begin{equation}\label{trans_xy}
\begin{aligned}
&\hspace{2in}\psi\left[ x' \middle| \overleftarrow{X},\overleftarrow{Y}\right](t,\omega)=
\\ &
 \lim_{\Delta t \downarrow 0}\frac{1}{\Delta t} \mathbb{P}\left(\{\omega'\in\Omega : \exists t'\in[t,t+\Delta t) \text{ s.t. } X_{t'}(\omega') = x' \}\mid X_{t^{-}-s}^{t^{-}}, Y_{t^{-}-r}^{t^{-}} \right)\left(\omega\right),
\end{aligned}
\end{equation}
the {\em conditional transition rate of} $X$ {\em given }$X$ {\em of} $x'$ {\em at} $t$, denoted $\psi\left[ x' \middle| \overleftarrow{X}\right](t,\omega)$, by
% \begin{small}
 \begin{equation}\label{trans_x}
 \begin{aligned}
& \hspace{1.5 in}\psi\left[ x' \middle|   \overleftarrow{X}\right](t,\omega)= \\
& \lim_{\Delta t \downarrow 0}\frac{1}{\Delta t} \mathbb{P}\left(\left\{\omega'\in\Omega : \exists  t'\in[t,t+\Delta t)\text{ s.t. } X_{t'}(\omega') = x' \right\}\mid X_{t^{-}-s}^{t^{-}} \right)\left(\omega\right),\\
\end{aligned}
\end{equation}
%\end{small}
and the {\em conditional escape rates} $\lambda^{(s)}_{X | X}(t,\omega)$ and $ \lambda^{(s,r)}_{X | X,Y}(t,\omega)$ by \begin{equation}\label{esc_X}
\lambda^{(s)}_{X | X}(t,\omega) = \sum_{x'\in\Sigma,x'\neq x_{t}^{-}}\psi\left[ x'\middle| \overleftarrow{X}\right](t,\omega)
\end{equation}
and
\begin{equation}\label{esc_XY}
\lambda^{(s,r)}_{X\mid X,Y}(t,\omega) = \sum_{x'\in\Sigma,x'\neq x_{t}^{-}}\psi\left[x' \middle| \overleftarrow{X},\overleftarrow{Y} \right](t,\omega).
\end{equation}
\end{definition}
\begin{remark}
In the forthcoming, we will sometimes regard the conditional transition rates defined above as measures on the space $\left(\Sigma, \mathcal{X} \right)$ for fixed $ \omega \in \Omega, t \in \mathbb{T}$ in accordance with standard definitions of transition kernels (see Section 1.2 of \cite{jacobsen}). 
\end{remark}
\begin{Notation}
 for $t\in[t_{0},T), \omega\in\Omega, $and $s,r>0$, let $$\Delta \lambda^{(s,r)}(t,\omega) = \lambda^{(s)}_{X\mid X}(t,\omega) - 
\lambda^{(s,r)}_{X\mid X,Y}(t,\omega).$$
\end{Notation}
 We now consider TE between time-homogeneous Markov processes. 
 
 \begin{definition}
 Suppose $\left( \Omega, \mathcal{F}, \mathbb{P}\right)$ is a probability space, $\mathbb{T}\subset \mathbb{R}_{\geq 0}$ is a bounded and closed interval, $\Sigma$ is a countable set, and $\mathcal{X}$ is a $\sigma-$algebra of subsets of $\Sigma$ containing all singletons of $\Sigma$.  A stochastic process $X = \left( X_{t}\right)_{t\in \mathbb{T}}$ is a {\em time-homogeneous Markov jump process} if all of its sample paths are piecewise constant and right-continuous and $\forall n\geq 1$, times $t_{0}<t_{1}<\dots<t_{n-1}$, and sets $A_{i}\in\mathcal{X}$ for all $ 0\leq i\leq n$, \begin{equation*}
 \begin{aligned}
 &\mathbb{P}_{t_{n-1}+\tau}\left[X_{t_{n-1}+\tau}\in A_{n-1}\middle| X_{t_{n-2}+\tau}, \cdots, X_{t_{0}+\tau}\right](\omega)\\
 &=\mathbb{P}_{t_{n-1}+\tau}\left[X_{t_{n-1}+\tau}\in A_{n-1}\middle|  X_{t_{n-2}+\tau}\right](\omega)\\
 &= \mathbb{P}_{t_{n-1}}\left[X_{t_{n-1}}\in A_{n-1} \middle|X_{t_{n-2}}\right](\omega)
 \end{aligned}
 \end{equation*}
 for each $\omega\in\Omega$ and all $\tau \geq 0$ such that $t_{i-1+\tau}\in\mathbb{T}$ for all $0\leq i\leq n$. 
 \end{definition}
 We now present a Girsanov formula for the pathwise transfer entropy when the destination process is a time-homogeneous Markov jump process and the source process is any jump process.
\begin{theorem}\label{markov_theorem}
Suppose $\Sigma$ is countable.  Suppose further that $X$ and $Y$ are jump stochastic processes on $\mathbb{T}$ with $[t_{0},T)\subset \mathbb{T}$ and $X$ is a time-homogeneous Markov process with conditional transition rates given by (\ref{trans_xy}) and (\ref{trans_x}) and conditional escape rates given by (\ref{esc_XY}) and (\ref{esc_X}).  If 
\begin{itemize}
\item[1.] $\forall \omega \in \Omega$, $\psi\left[ x_{t_0}\middle| \overleftarrow{X},\overleftarrow{Y}\right](t_{0},\omega) = \psi\left[ x_{t_0}\middle| \overleftarrow{X}\right](t_{0},\omega).$
\item[2.] The conditional escape rates are bounded and positive.
\item[3.] $\psi\left[ \cdot \middle| \overleftarrow{X}, \overleftarrow{Y}\right](t,\omega)    \ll  \psi\left[ \cdot \middle| \overleftarrow{X}\right](t,\omega)$ for each $\omega \in \Omega$ and $t \in [t_{0},T).$
\end{itemize}
Then 
\begin{equation}
\begin{aligned}
{\mathcal{P}\mathcal{T}}_{Y \rightarrow X}^{(s,r)}\mid_{t_0}^{T}\left( \omega, x_{t_{0}}^{T} \right)= &\sum_{i=1}^{N_{X}^{[t_{0},T)}\left(x_{t_{0}}^{T}\right)}\log\left[\frac{\psi\left[ x_{\tau_i}\middle| \overleftarrow{X},\overleftarrow{Y}\right](\tau_{i},\omega)}{\psi\left[ x_{\tau_i}\middle| \overleftarrow{X}\right](\tau_{i},\omega)}\right] \\
& \qquad +\int_{t_{0}}^{T}\left( \Delta \lambda^{(s,r)}(t,\omega)\right)dt
\end{aligned}
\end{equation}
for every $ \omega \in \Omega$ and every sample path $x_{t_{0}}^{T}$ of $X$.
\end{theorem}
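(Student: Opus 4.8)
The plan is to compute the Radon--Nikodym derivative $d\mathbb{P}_{X\mid X,Y}^{(s,r)}/d\mathbb{P}_{X}^{(s)}$ defining ${\mathcal{P}\mathcal{T}}_{Y \rightarrow X}^{(s,r)}\mid_{t_0}^{T}(\omega,x_{t_0}^{T})$ by discretizing along the comb sets $D_{\Delta t}$ and passing to the limit $\Delta t\downarrow 0$. First I would invoke the construction of Section \ref{CPM}: by (\ref{eq.9}) and (\ref{eq.10}) the restrictions of $P^{(\omega)}=\mathbb{P}_{X\mid X,Y}^{(s,r)}(\omega)$ and $M^{(\omega)}=\mathbb{P}_{X}^{(s)}(\omega)$ to $\mathcal{F}_{\Delta t}^{[t_0,T)}$ are the product measures $\prod_{i=0}^{\tau-1}\mathbb{P}_{X\mid \overleftarrow{X},\overleftarrow{Y},i,\Delta t}^{(\omega),(k,l)}$ and $\prod_{i=0}^{\tau-1}\mathbb{P}_{X\mid \overleftarrow{X},i,\Delta t}^{(\omega),(k)}$. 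Lemma \ref{fixedR-N} then expresses the finite-dimensional derivative as a product of one-step derivatives $d\mathbb{P}_{X\mid \overleftarrow{X},\overleftarrow{Y},i,\Delta t}^{(\omega),(k,l)}/d\mathbb{P}_{X\mid \overleftarrow{X},i,\Delta t}^{(\omega),(k)}$ evaluated at $x_{\langle T,i,\Delta t\rangle}$, and the backward-martingale argument used in Lemma \ref{KL_conv} identifies $dP^{(\omega)}/dM^{(\omega)}$ as the $M^{(\omega)}$-a.s.\ limit of these finite products. Taking logarithms, ${\mathcal{P}\mathcal{T}}_{Y \rightarrow X}^{(s,r)}\mid_{t_0}^{T}(\omega,x_{t_0}^{T})$ becomes $\lim_{\Delta t\downarrow 0}\sum_{i=0}^{\tau-1}\log\big(d\mathbb{P}_{X\mid \overleftarrow{X},\overleftarrow{Y},i,\Delta t}^{(\omega),(k,l)}/d\mathbb{P}_{X\mid \overleftarrow{X},i,\Delta t}^{(\omega),(k)}\big)(x_{\langle T,i,\Delta t\rangle})$.

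The next step is to evaluate each one-step log-ratio along the fixed path $x_{t_0}^{T}$. Because the escape rates are bounded (hypothesis 2), the path has only finitely many jumps on $[t_0,T)$, namely the $N_{X}^{[t_{0},T)}(x_{t_0}^{T})$ jumps at times $\tau_1,\dots,\tau_{N}$, and for all sufficiently small $\Delta t$ each comb interval straddles at most one jump, so the observed increment between consecutive comb points records exactly whether a jump occurred there. I would partition the indices $i$ into ``no-jump'' and ``jump'' indices accordingly. On a no-jump index the probability that $X$ does not leave its current state is, by the very definition of the escape rates (\ref{esc_X})--(\ref{esc_XY}) as instantaneous rates, equal to $1-\lambda^{(s,r)}_{X\mid X,Y}(t,\omega)\Delta t+o(\Delta t)$ under the numerator and $1-\lambda^{(s)}_{X\mid X}(t,\omega)\Delta t+o(\Delta t)$ under the denominator, so the one-step log-ratio equals $\Delta\lambda^{(s,r)}(t,\omega)\,\Delta t+o(\Delta t)$; the time-homogeneous Markov structure of $X$ is what guarantees the denominator rate is a genuine state-dependent limit. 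On an index containing the jump $\tau_j$ to state $x_{\tau_j}$, the leading $\Delta t$ factors of the two transition probabilities $\psi[x_{\tau_j}\mid\overleftarrow{X},\overleftarrow{Y}]\Delta t$ and $\psi[x_{\tau_j}\mid\overleftarrow{X}]\Delta t$ cancel, leaving exactly $\log\big(\psi[x_{\tau_j}\mid\overleftarrow{X},\overleftarrow{Y}](\tau_j,\omega)/\psi[x_{\tau_j}\mid\overleftarrow{X}](\tau_j,\omega)\big)$, which is well defined by hypothesis 3. Hypothesis 1 matches the two measures at the initial time, so the comb index nearest $t_0$ contributes no spurious boundary term.

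Summing and letting $\Delta t\downarrow 0$, the jump indices contribute the finite sum $\sum_{i=1}^{N_{X}^{[t_{0},T)}}\log\big(\psi[x_{\tau_i}\mid\overleftarrow{X},\overleftarrow{Y}](\tau_i,\omega)/\psi[x_{\tau_i}\mid\overleftarrow{X}](\tau_i,\omega)\big)$, while the no-jump indices form a Riemann sum $\sum\Delta\lambda^{(s,r)}(t,\omega)\,\Delta t$ converging to $\int_{t_0}^{T}\Delta\lambda^{(s,r)}(t,\omega)\,dt$. Combining the two pieces yields the claimed Girsanov formula.

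The main obstacle is the rigorous control of the two interlaced limits: the martingale limit producing $dP^{(\omega)}/dM^{(\omega)}$ from the discrete products, and the $\Delta t\downarrow 0$ limit of the explicit one-step ratios. I would need to show these agree, i.e.\ that the $o(\Delta t)$ errors accumulated over the $O(1/\Delta t)$ no-jump indices vanish uniformly, which is precisely where boundedness of the escape rates (hypothesis 2) is essential, and that the resulting Riemann sum genuinely converges to the integral, which requires at least Riemann integrability (e.g.\ piecewise continuity in $t$) of $t\mapsto\Delta\lambda^{(s,r)}(t,\omega)$ along the path---a regularity point that should be made explicit. The jump-term analysis is comparatively routine once the at-most-one-jump-per-interval reduction is in place.
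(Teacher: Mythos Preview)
Your proposal is correct in outline but takes a genuinely different route from the paper. The paper does \emph{not} derive the Radon--Nikodym derivative from scratch via the comb-set discretization; instead it invokes a ready-made Girsanov formula for Markov jump processes, citing Appendix~1, Proposition~2.6 of Kipnis--Landim, which states directly that for $P\ll Q$ on the path space of a jump process with escape rates $\gamma_P,\gamma_Q$ and transition probabilities $p_P,p_Q$, one has
\[
\log\frac{dP}{dQ}(x_{t_0}^T)=\sum_{i=0}^{N}\log\frac{\gamma_P(x_{\tau_i}^-)p_P(x_{\tau_i}^-,x_{\tau_i})}{\gamma_Q(x_{\tau_i}^-)p_Q(x_{\tau_i}^-,x_{\tau_i})}+\int_{t_0}^{T}\bigl(\gamma_Q(x_t)-\gamma_P(x_t^-)\bigr)\,dt.
\]
The remainder of the paper's argument is bookkeeping: specializing $P,Q$ to $\mathbb{P}_{X\mid X,Y}^{(s,r)}(\omega)$ and $\mathbb{P}_X^{(s)}(\omega)$, rewriting $\gamma\cdot p$ as the corresponding conditional transition rate $\psi$, and using hypothesis~1 to kill the $i=0$ summand.

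Your approach is more self-contained and ties in naturally with the paper's own machinery (the product representation of the restricted measures, Lemma~\ref{fixedR-N}, the martingale limit behind Lemma~\ref{KL_conv}), effectively re-deriving the Kipnis--Landim formula rather than importing it. The cost is exactly what you flag: you must control the accumulated $o(\Delta t)$ errors over $O(1/\Delta t)$ no-jump intervals and justify the Riemann-sum convergence of $\sum\Delta\lambda^{(s,r)}(t,\omega)\,\Delta t$, which needs some regularity of $t\mapsto\Delta\lambda^{(s,r)}(t,\omega)$ that is not explicitly assumed. The paper's citation-based approach avoids this analysis entirely, at the price of depending on an external result whose hypotheses (bounded positive rates, absolute continuity of transition kernels) are precisely hypotheses~2 and~3 here.
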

\begin{proof}

Since $X$ is Markov, there exists an increasing sequence of finite random jump times $\{\tau_{n}\}_{n \geq 0}$ such that $\tau_{0} =t_{0}$, $X_{\tau_{n}}$ is constant on $[\tau_{n}, \tau_{n+1})$, and $X_{\tau_{n}^{-}} \neq X_{\tau_{n}}$.  Furthermore, from the Markov assumption, conditionally on $\left\{ X_{\tau_{n}}\right\}_{n\geq 0}$, the variables $\left\{\tau_{n+1}-\tau_{n} \right\}_{n\geq 0}$ are independent and exponentially distributed.
We first need to show that for arbitrary measures $P \ll Q$ on the path space of piecewise constant sample paths of $X$ with transition probabilities $p_{P}(\cdot,\cdot)$,  $p_{Q}(\cdot,\cdot)$ and escape rates $\gamma_{P}, \gamma_{Q}$, that for every realization $x_{t_{0}}^{T}$ of the process $X_{t_{0}}^{T}$,
\begin{equation}\label{mark_PTE}
\frac{dP}{dQ}\left(x_{t_{0}}^{T}\right)
= \sum_{i=0}^{N_{X}^{[t_{0},T)}\left( x_{t_{0}}^{T}\right)}\log\frac{\gamma_{P}(x_{\tau_{i}}^{-})p_{P}\left(x_{\tau{i}}^{-},x_{\tau{i}}\right)}{\gamma_{Q}(x_{\tau_{i}}^{-})p_{Q}\left(x_{\tau{i}}^{-},x_{\tau_{i}}\right)} + \int_{t_{0}}^{T}\left( \gamma_{Q}(x_t) - 
\gamma_{P}(x_{t}^{-})\right)dt
\end{equation}
where $\left\{ \tau_{i}\right\}_{i = 0}^{N_{X}^{[t_{0},T)}}$ is the sequence of jump times of the realization $x_{t_{0}}^{T}$.
A proof of (\ref{mark_PTE}) is given in Appendix 1, Proposition 2.6 of \cite{Kipnis}.
Now letting $P$ and $Q$ be the measures in (\ref{trans_xy}) and (\ref{trans_x}), respectively, using assumption 1., and noting that $$\frac{\psi\left[ x_{\tau_i}\middle| \overleftarrow{X},\overleftarrow{Y}\right](\tau_{i},\omega)}{\lambda^{(s,r)}_{X\mid X,Y}(\tau_{i},\omega)} = p_{X\mid X,Y}(x_{\tau_{i}}, x_{\tau_{i}^{-}},Y_{\tau_{i}^{-}}(\omega))$$
and
$$\frac{\psi\left[ x_{\tau_i}\middle| \overleftarrow{X}\right](\tau_{i},\omega)}{\lambda^{(s)}_{X\mid X}(\tau_{i},\omega)} = p_{X\mid X}(x_{\tau_{i}},x_{\tau_{i}^{-}})$$
where $p_{X\mid X, Y}$ and $p_{X\mid X}$ denote conditional transition probabilities, we get that  
\begin{align*}
&{\mathcal{P}\mathcal{T}}_{Y \rightarrow X}^{(s,r)}\mid_{t_0}^{T}\left( \omega, x_{t_{0}}^{T} \right)=\\& \sum_{i=0}^{N_{X}^{[t_{0},T) \left( x_{t_{0}}^{T}\right)}}\log\left[\frac{\left(   \lambda^{(s,r)}_{X\mid X,Y}(\tau_{i},\omega)  \right) \left(   p_{X\mid X,Y}(x_{\tau_{i}}, x_{\tau_{i}^{-}},Y_{\tau_{i}^{-}}(\omega))   \right)}{\left(  \lambda^{(s)}_{X\mid X}(\tau_{i},\omega)\right)  \left( p_{X\mid X}(x_{\tau_{i}},x_{\tau_{i}^{-}}) \right)}\right]
\\ & \qquad+ \int_{t_{0}}^{T}\left( \Delta \lambda^{(s,r)}(t,\omega)\right)dt\\
 &= \sum_{i=0}^{N_{X}^{[t_{0},T) \left( x_{t_{0}}^{T}\right)}}\log\left[\frac{\psi\left[ x_{\tau_i}\middle| \overleftarrow{X},\overleftarrow{Y}\right](\tau_{i},\omega)}{\psi\left[ x_{\tau_i}\middle| \overleftarrow{X}\right](\tau_{i},\omega)}\right] + \int_{t_{0}}^{T}\left(\Delta \lambda^{(s,r)}(t,\omega)\right)dt \\
& =  \log\left[\frac{\psi\left[ x_{0}\middle| \overleftarrow{X},\overleftarrow{Y}\right](\tau_{0},\omega)}{\psi\left[ x_{0}\middle| \overleftarrow{X}\right](\tau_{0},\omega)}\right]+\sum_{i=1}^{N_{X}^{[t_{0},T)\left(x_{t_{0}}^{T} \right)}}\log\left[\frac{\psi\left[ x_{\tau_i}\middle| \overleftarrow{X},\overleftarrow{Y}\right](\tau_{i},\omega)}{\psi\left[ x_{\tau_i}\middle| \overleftarrow{X}\right](\tau_{i},\omega)}\right] \\
& \qquad + \int_{t_{0}}^{T}\left(\Delta \lambda^{(s,r)}(t,\omega)\right)dt \\
&=\sum_{i=1}^{N_{X}^{[t_{0},T)}(x_{t_{0}}^{T})}\log\left[\frac{\psi\left[ x_{\tau_i}\middle| \overleftarrow{X},\overleftarrow{Y}\right](\tau_{i},\omega)}{\psi\left[ x_{\tau_i}\middle| \overleftarrow{X}\right](\tau_{i},\omega)}\right]+ \int_{t_{0}}^{T}\left(\Delta \lambda^{(s,r)}(t,\omega)\right)dt .
\end{align*}
\end{proof}

From here, we present the following explicit formula for the TE rate when the source process is a time homogeneous Markov jump process and the destination process is a time homogeneous Poisson process. 
\begin{corollary}
Suppose $X$ is a time homogeneous Poisson process and $Y$ is a time homogeneous Markov jump process on $[t_{0},T)$ such that the hypotheses of Theorem \ref{markov_theorem} hold.  If  $t \mapsto \log\left[\frac{\psi\left[ x_{t}\middle| \overleftarrow{X},\overleftarrow{Y}\right](t,\omega)}{\psi\left[ x_{t}\middle| \overleftarrow{X}\right](t,\omega)}\right] \in L_{1}([t_{0},T), \mu)$ for each $\omega \in \Omega$, then $\forall t \in [t_{0},T)$ the transfer entropy rate, $\mathbb{T}^{(s,r)}_{Y \rightarrow X}(t),$ is given by
\begin{small}
\begin{equation}
\begin{aligned}
&\mathbb{T}^{(s,r)}_{Y \rightarrow X}(t) = \\&  \qquad\mathbb{E}_{\mathbb{P}}\left[ \mathbb{E}_{\mathbb{P}_{X\mid X, Y(\cdot)}^{(s,r)}}\left[ \lambda^{(s,r)}_{X\mid X,Y}(t,\cdot) \left(\log\left[\frac{\psi\left[ x_{t}\middle| \overleftarrow{X},\overleftarrow{Y}\right](t,\cdot)}{\psi\left[ x_{t}\middle| \overleftarrow{X}\right](t,\cdot)}\right]-1\right)  +  \lambda^{(s)}_{X\mid X}(t,\cdot) \right]\right]. 
\end{aligned}
  \end{equation}
  \end{small}
\end{corollary}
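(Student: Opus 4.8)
The plan is to combine the definition of the transfer entropy rate with the Girsanov formula of Theorem \ref{markov_theorem}, applied on the short interval $[t, t+\Delta t)$ in place of $[t_{0}, T)$. Since the EPT is by definition the $\mathbb{P}$-expectation of the $\mathbb{P}_{X\mid X, Y}^{(s,r)}$-expectation of the pathwise transfer entropy, I would first write
\begin{equation*}
\mathbb{T}^{(s,r)}_{Y \rightarrow X}(t) = \lim_{\Delta t \downarrow 0}\frac{1}{\Delta t}\,\mathbb{E}_{\mathbb{P}}\left[\mathbb{E}_{\mathbb{P}_{X\mid X, Y}^{(s,r)}}\left[{\mathcal{P}\mathcal{T}}_{Y \rightarrow X}^{(s,r)}\mid_{t}^{t+\Delta t}\right]\right].
\end{equation*}
Because $X$ is a time-homogeneous Poisson process it is in particular a time-homogeneous Markov jump process, so Theorem \ref{markov_theorem} applies on $[t, t+\Delta t)$ and rewrites the inner pathwise quantity as a sum over the jump times $\tau_{i}$ of $X$ in $[t, t+\Delta t)$ of $\log[\psi[x_{\tau_i}\mid\overleftarrow{X},\overleftarrow{Y}]/\psi[x_{\tau_i}\mid\overleftarrow{X}]]$ plus the compensating integral $\int_{t}^{t+\Delta t}\Delta\lambda^{(s,r)}(u,\cdot)\,du$.

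The key step is evaluating $\mathbb{E}_{\mathbb{P}_{X\mid X,Y}^{(s,r)}}$ of the jump sum. Under $\mathbb{P}_{X\mid X, Y}^{(s,r)}$ the process $X$ has jump intensity given by the transition rates $\psi[\cdot\mid\overleftarrow{X},\overleftarrow{Y}]$, so the L\'{e}vy system (Dynkin/compensator) identity for pure-jump processes gives
\begin{equation*}
\mathbb{E}_{\mathbb{P}_{X\mid X,Y}^{(s,r)}}\left[\sum_{i}\log\frac{\psi[x_{\tau_i}\mid\overleftarrow{X},\overleftarrow{Y}](\tau_i,\cdot)}{\psi[x_{\tau_i}\mid\overleftarrow{X}](\tau_i,\cdot)}\right] = \mathbb{E}_{\mathbb{P}_{X\mid X,Y}^{(s,r)}}\left[\int_{t}^{t+\Delta t}\sum_{x'\neq x_{u^-}}\log\left(\frac{\psi[x'\mid\overleftarrow{X},\overleftarrow{Y}]}{\psi[x'\mid\overleftarrow{X}]}\right)\psi[x'\mid\overleftarrow{X},\overleftarrow{Y}](u,\cdot)\,du\right].
\end{equation*}
Since $X$ is Poisson, the only admissible transition out of $x_{u^-}$ is to $x_{u^-}+1$, so the inner sum collapses to one term whose rate equals the escape rate $\lambda^{(s,r)}_{X\mid X,Y}$; the jump sum therefore contributes $\int_{t}^{t+\Delta t}\lambda^{(s,r)}_{X\mid X,Y}(u,\cdot)\log[\psi[x_u\mid\overleftarrow{X},\overleftarrow{Y}]/\psi[x_u\mid\overleftarrow{X}]]\,du$. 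Adding the compensator integral $\int_{t}^{t+\Delta t}(\lambda^{(s)}_{X\mid X}-\lambda^{(s,r)}_{X\mid X,Y})\,du$ and regrouping produces an integrand equal to $\lambda^{(s,r)}_{X\mid X,Y}(\log[\,\cdots\,]-1)+\lambda^{(s)}_{X\mid X}$, which is precisely the bracketed expression in the statement.

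Finally I would divide by $\Delta t$ and let $\Delta t \downarrow 0$. As the rates of a time-homogeneous process are right-continuous in $t$, the fundamental theorem of calculus (Lebesgue differentiation) yields $\frac{1}{\Delta t}\int_{t}^{t+\Delta t} g(u,\cdot)\,du \to g(t,\cdot)$, leaving exactly the integrand evaluated at $u=t$ inside the double expectation. The main obstacle is justifying the interchange of the limit $\Delta t\downarrow 0$ with the two expectations $\mathbb{E}_{\mathbb{P}}$ and $\mathbb{E}_{\mathbb{P}_{X\mid X,Y}^{(s,r)}}$; I would handle this by dominated convergence, using the hypothesis $t\mapsto\log[\psi[x_t\mid\overleftarrow{X},\overleftarrow{Y}]/\psi[x_t\mid\overleftarrow{X}]]\in L_{1}([t_{0},T),\mu)$ together with the boundedness of the escape rates (hypothesis 2 of Theorem \ref{markov_theorem}) to build an integrable dominating function uniform in $\Delta t$. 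A secondary technical point is verifying the L\'{e}vy system identity for the conditional measure $\mathbb{P}_{X\mid X,Y}^{(s,r)}$, i.e.\ that this measure admits $\psi[\cdot\mid\overleftarrow{X},\overleftarrow{Y}]$ as its jump intensity, which follows from the construction of the path measures in Section \ref{CPM} and the definition of the conditional transition rates.
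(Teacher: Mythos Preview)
Your proposal is correct and follows essentially the same route as the paper: apply Theorem \ref{markov_theorem} on $[t,t+\Delta t)$, convert the expectation of the jump sum into a time integral against the intensity via the compensator identity (the paper phrases this as the martingale property of $N_X - \int \lambda^{(s,r)}_{X\mid X,Y}\,dt'$ and invokes Theorem 9.2.1 of \cite{Bremaud}, which is your L\'evy system identity), then differentiate the resulting time integral (the paper cites Theorem A16.1 of \cite{Williams} for this last step). Your intermediate display with the sum over $x'\neq x_{u^-}$ before specializing to the Poisson case is an extra line the paper skips, and your discussion of the dominated-convergence justification for the limit interchange is more explicit than the paper's, but these are presentational differences rather than a different argument.
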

\begin{proof}
Observe that for each $\omega \in \Omega$ and sample path $x_{t_0}^{T}$ we have
\begin{align*}
&{\mathcal{P}\mathcal{T}}_{Y \rightarrow X}^{(s,r)}\mid_{t_0}^{T}\left( \omega, x_{t_{0}}^{T} \right)= \\& \sum_{i=1}^{N_{X}^{[t_{0},T)}(\omega)}\log\left[\frac{\psi\left[ x_{\tau_i}\middle| \overleftarrow{X},\overleftarrow{Y}\right](\tau_{i},\omega)}{\psi\left[ x_{\tau_i}\middle| \overleftarrow{X}\right](\tau_{i},\omega)}\right] + \int_{t_{0}}^{T}\left( \Delta \lambda^{(s,r)}(t,\omega)\right)dt \\
& = \int_{t_{0}}^{T}\log\left[\frac{\psi\left[ x_{t}\middle| \overleftarrow{X},\overleftarrow{Y}\right](t,\omega)}{\psi\left[ x_{t}\middle| \overleftarrow{X}\right](t,\omega)}\right]dN_{X}^{[t_{0},t)}(\omega) + \int_{t_{0}}^{T}\left( \Delta \lambda^{(s,r)}(t,\omega)\right)dt.
\end{align*}
Since the process $\left( N_{X}^{[t_{0},t)}(\cdot) - \int_{t_{0}}^{t} \lambda_{X \mid X,Y}(t',\cdot)dt'\right)_{t \in [t_{0},T)}$ is a martingale, the stochastic process $\left(\int_{t_{0}}^{t}\log\left[\frac{\psi\left[ x_{t'}\middle| \overleftarrow{X},\overleftarrow{Y}\right](t',\cdot)}{\psi\left[ x_{t'}\middle| \overleftarrow{X}\right](t',\cdot)}\right]dN_{X}^{[t_{0},t')}(\cdot)\right)_{t\in[t_0,T)}$ is a martingale such that for each $\omega\in\Omega$
\begin{equation}\label{Brem_cons}
\begin{aligned}
&\mathbb{E}_{\mathbb{P}_{X\mid X, Y(\omega)}^{(s,r)}}\left[ \int_{t_{0}}^{T}\log\left[\frac{\psi\left[ x_{t}\middle| \overleftarrow{X},\overleftarrow{Y}\right](t,\cdot)}{\psi\left[ x_{t}\middle| \overleftarrow{X}\right](t,\cdot)}\right]dN_{X}^{[t_{0},t)}(\cdot) \right] \\ &= \mathbb{E}_{\mathbb{P}_{X\mid X, Y(\omega)}^{(s,r)}}\left[ \int_{t_{0}}^{T}\left(\lambda^{(s,r)}_{X\mid X,Y}(t,\cdot)\log\left[\frac{\psi\left[ x_{t}\middle| \overleftarrow{X},\overleftarrow{Y}\right](t,\cdot)}{\psi\left[ x_{t}\middle| \overleftarrow{X}\right](t,\cdot)}\right]\right) dt\right]
\end{aligned}
\end{equation}
as a consequence of Theorem 9.2.1 of \cite{Bremaud}.
Now let $\widetilde{\psi}_{t,\omega} = \psi\left[ x_{t}\middle| \overleftarrow{X},\overleftarrow{Y}\right](t,\omega)$, $\Bar{\psi}_{t,\omega} = \psi\left[ x_{t}\middle| \overleftarrow{X}\right](t,\omega)$, and $f(t,\omega) = \lambda^{(s,r)}_{X\mid X,Y}(t,\omega) \left(\log\left[\frac{\widetilde{\psi}_{t,\omega}}{\Bar{\psi}_{t,\omega}}\right]-1\right)$ for each $t\in[t_{0},T)$ and $\omega\in\Omega.$  From Theorem \ref{markov_theorem} and (\ref{Brem_cons}) we have 
\begin{equation}
\begin{aligned}
&\mathbb{T}^{(s,r)}_{Y \rightarrow X}(t)=   \lim_{\Delta t \downarrow 0} \frac{1}{\Delta t}\mathbb{E}_{\mathbb{P}}\left[
\mathbb{E}_{\mathbb{P}_{X\mid X, Y(\omega)}^{(s,r)}}\left[ \int_{t}^{t+\Delta t}\left[f(t',\omega)  +  \lambda^{(s)}_{X\mid X}(t',\omega)\right]dt' \right]\right]
\\ & = 
\mathbb{E}_{\mathbb{P}}\left[ \mathbb{E}_{\mathbb{P}_{X\mid X, Y(\omega)}^{(s,r)}}\left[  \left(\lambda^{(s,r)}_{X\mid X,Y}(t,\omega)\right) \left(\log\left[\frac{\widetilde{\psi}_{t,\omega}}{\Bar{\psi}_{t,\omega}}\right]-1\right)  +  \lambda^{(s)}_{X\mid X}(t,\omega) \right]\right]
\end{aligned}
\end{equation}
where the last equality follows from Theorem A16.1 in \cite{Williams}.

\end{proof}

\section{Conclusion}
We end with some open problems regarding the present work.  First, motivated by \cite{Wyner}, we present an alternative definition of EPT in which, we define it as a limit superior of conditional mutual information over sub-partitions of the interval $[t_{0}, T)$.   
 We begin by defining sub-partitions of an interval of the form $[t_{0}, T)$. 
\begin{definition}
A {\em sub-partition} $P$ of an interval $[t_{0},T)\subset \mathbb{R}$ is a set of real numbers $t_{0}, t_{1}, \dots, t_{n}$ such that $$t_{0}<t_{1}< \dots< t_{n}<T.$$
\end{definition}
\begin{definition}
Suppose $\mathbb{T}$ is a closed and bounded interval and let $P_{[t_{0},T)}$ denote the set of sub-partitions of the interval $[t_{0},T)\subset \mathbb{T}$ and $\left|\left|P\right|\right|$ denote the {\em mesh} of a sub-partition $P\in P_{[t_{0},T)}$, defined by 
$$
\left|\left|P\right|\right| = \max_{\substack{t_{i}\in P\\ i\geq 1}} \left| t_{i}-t_{i-1}\right|.
$$ 
For all $P \in P_{[t_{0},T)}$; $r,s>0$, such that $(t_{0} - \max{(r,s)},T]\subset \mathbb{T}$, define the {\em sub-partitioned expected pathwise transfer entropy} of the sub-partition $P$, denoted ${\mathcal{E}\mathcal{P}\mathcal{T}}_{Y \rightarrow X}^{(s,r), P}\mid_{t_{0}}^{T}$, by \begin{equation}\label{EPT_1}
{\mathcal{E}\mathcal{P}\mathcal{T}}_{Y \rightarrow X}^{(s,r),P}\mid_{t_{0}}^{T} = \sum_{i=1}^{\left|\left| P\right|\right|} I\left( X_{t_{i-1}}^{t_{i}}; Y_{t_{i}-r}^{t_{i}}\mid X_{t_{i-1}-s}^{t_{i-1}} \right).
\end{equation}
\end{definition}
\begin{definition}\label{new_EPT}
Suppose $\mathbb{T}$ is a closed and bounded interval such that  $[t_{0},T)\subset \mathbb{T}$. For all $r,s>0$ such that $(t_{0} - \max{(r,s)},T]\subset \mathbb{T}$, define 
\begin{equation}
\begin{aligned}\label{alt_def}
\widetilde{\mathcal{E}\mathcal{P}\mathcal{T}}_{Y \rightarrow X}^{(s,r)}\mid_{t_{0}}^{T}&:=\limsup_{\substack{\Delta t \downarrow 0\\ P\in P_{[t_{0},T)}, \left|\left| P\right|\right|\leq \Delta t }}{\mathcal{E}\mathcal{P}\mathcal{T}}_{Y \rightarrow X}^{(s,r),P}\mid_{t_{0}}^{T}\\& = \limsup_{\substack{\Delta t \downarrow 0 \\ P\in P_{[t_{0},T)}, \left|\left| P\right|\right|\leq \Delta t }} \sum_{i=1}^{\left|\left| P\right|\right|} I\left( X_{t_{i-1}}^{t_{i}}; Y_{t_{i}-r}^{t_{i}}\mid X_{t_{i-1}-s}^{t_{i-1}} \right).
\end{aligned}
\end{equation}
\end{definition}

\begin{Question}
Is this definition advantageous or even equivalent to Definition \ref{EPT}?
\end{Question}
\indent In Section \ref{Application: Lagged Poisson point process} we presented an explicit form of $KL\left(P_{\Delta t}^{(\omega)}\middle| \middle|  M_{\Delta t}^{(\omega)} \right)$ and demonstrated that it satisfied sufficient conditions of Corollary \ref{PPPusecase}. We propose the following natural question.
\begin{Question}
What other processes satisfy (\ref{bounder}) or  (\ref{PPP_help}) other than the deterministically lagged counting process of a time homogeneous Poisson point process?
\end{Question}
\indent In the Appendix section, we provide an explicit form for the divergence $KL\left(\mathbb{P}_{X| \overleftarrow{X}, \overleftarrow{Y},i,\Delta t}^{(\omega),(k,l)} \middle| \middle| \mathbb{P}_{X| \overleftarrow{X},i,\Delta t}^{(\omega),(k)} \right)$ where $Y$ is a time-lagged version of a Wiener process $X$. However, there is no explicit form for neither $KL\left(P_{\Delta t}^{(\omega)} \middle| \middle|  M_{\Delta t}^{(\omega)} \right)$ nor $KL\left(\mathbb{P}_{X| \overleftarrow{X}, \overleftarrow{Y},i,\Delta t}^{(\omega),(k,l)} \middle| \middle| \mathbb{P}_{X| \overleftarrow{X},i,\Delta t}^{(\omega),(k)} \right)$ other than those presented in the present work.  There are a myriad of transformations one could perform on a process to yield another, for example, thinning, superimposition, deterministic and random lagging, and convolution.  Each of these transformations yields a new process that is not independent of the original process; thus, in general, there ought to be a nonzero TE between the two.  Compound Poisson processes (CPP) are of particular relevance to the continuous-time framework presented in this work and are widely used to model neural spike trains, social media sentiment, geological activity, etc.; therefore, a demonstration that either (\ref{bounder}) or (\ref{PPP_help}) hold for pairs of processes derived from variously transformed CPPs may be useful for applications. 
 
One of the main contributions of this work is a definition of the TE rate native to continuous-time processes.  However, our methodology does not present any practical means of measuring it.

\begin{Question}
Do there exist practical estimators of the EPT and the TE rate, at least for common process types?
\end{Question}

The transfer entropy estimator presented in \cite{kozachenko1987sample} is of practical utility for discrete-time processes.  Can it be generalized to appropriately measure TE using the measure theoretical approach taken in this work? If so, what are its properties?  There is a wealth of questions one could propose pertaining to such an estimator, e.g., is this estimator biased or asymptotically biased/unbiased?  Is it an efficient estimator and how is its speed performance?  Does there exist an appropriate model class under which an MLE for TE exists?  How does this estimator compare with binning and partitioning based estimators?
     
If there is no such estimator that can be used in a general setting, does there exist an estimator when the destination and source process are a particular type of continuous-time stochastic process?  Providing estimators for TE rate and EPT between a pair of time inhomogeneous PPPs, compound Poisson processes, or Brownian motions with various effects on each other would likely be helpful in understanding a wide variety of linked, real-world time series.

%\begin{acknowledgements}
%If you'd like to thank anyone, place your comments here
%and remove the percent signs.
%\end{acknowledgements}

% Authors must disclose all relationships or interests that 
% could have direct or potential influence or impart bias on 
% the work: 
%
% \section*{Conflict of interest}
%
% The authors declare that they have no conflict of interest.

% BibTeX users please use one of
%\bibliographystyle{spbasic}      % basic style, author-year citations
%\bibliographystyle{spmpsci}      % mathematics and physical sciences
%\bibliographystyle{spphys}       % APS-like style for physics
%\bibliography{}   % name your BibTeX data base

% Non-BibTeX users please use

\end{document}